\def\@defaultbiblabelstyle#1{[#1]}
\theoremstyle{plain}
\newtheorem{thm}{Theorem}[section]
\newtheorem{prop}[thm]{Proposition}
\newtheorem{lemma}[thm]{Lemma}
\newtheorem{cor}[thm]{Corollary}
\theoremstyle{definition}
\newtheorem{definition}[thm]{Definition}
\theoremstyle{remark}
\newtheorem{remark}[thm]{Remark}
\newtheorem*{ack}{Acknowledgements}
\newcommand{\End}{\mathrm{End}}
\newcommand{\Aut}{\mathrm{Aut}}
\DeclareFontFamily{U}{rsf}{}
\DeclareFontShape{U}{rsf}{m}{n}{<5> <6> rsfs5 <7> <8> <9> rsfs7 <10-> rsfs10}{}
\DeclareMathAlphabet\Scr{U}{rsf}{m}{n}
\def\Z{\mathbb{Z}}
\def\R{\mathbb{R}}
\def\dd{\mathrm{d}}
\def\fra{\mathfrak{a}}
\newcommand{\be}{\begin{equation*}}
\newcommand{\ee}{\end{equation*}}
\newcommand{\ben}{\begin{equation}}
\newcommand{\een}{\end{equation}}
\newcommand{\beqa}{\begin{eqnarray*}}
\newcommand{\eeqa}{\end{eqnarray*}}
\newcommand{\beqan}{\begin{eqnarray}}
\newcommand{\eeqan}{\end{eqnarray}}
\newcommand{\Tr}{\mathrm{Tr}}
\def\cC{{\mathcal C}}
\def\cB{\Scr B}
\def\Spin{\mathrm{Spin}}
\def\Spin{\mathrm{Spin}}
\def\SO{\mathrm{SO}}
\def\cD{\mathcal{D}}
\def\cA{\mathcal{A}}
\def\cE{\mathcal{E}}
\def\cP{\mathcal{P}}
\def\cP{\mathcal{P}}
\def\cC{\mathcal{C}}
\def\G_2{\mathrm{G_2}}
\def\Aut{\mathrm{Aut}}
\def\Im{\mathrm{Im}}
\def\G{\mathrm{G}}
\def\R{\mathbb{R}}
\def\dd{\mathrm{d}}
\newcommand{\escal}[1]{\langle#1\rangle}
\def\Span{\mathrm{Span}}
\def\fX{\mathfrak{X}}
\newcolumntype{P}[1]{>{\centering\arraybackslash}p{#1}}
\setlist[itemize]{leftmargin=*}
\begin{document}


\title[Parallel spinors for $\mathrm{G}_2^*$ and isotropic structures]{Parallel spinors for $\mathrm{G}_2^*$ and isotropic structures}

\author[Alejandro Gil-García]{Alejandro Gil-García$^{1,\dag}$ \orcidlink{0000-0002-9370-241X}}
\address{$^1$Beijing Institute of Mathematical Sciences and Applications, Beijing, China}
\email{$^{\dag}$alejandrogilgarcia@bimsa.cn}

\author[C. S. Shahbazi]{C. S. Shahbazi$^{2,\ddag}$ \orcidlink{0000-0003-1185-9569}}
\address{$^2$Departamento de Matem\'aticas, Universidad UNED - Madrid, Reino de Espa\~na}
\email{$^{\ddag}$cshahbazi@mat.uned.es} 
\address{$^2$Fakult\"at f\"ur Mathematik, Universit\"at Hamburg, Bundesrepublik Deutschland.}
\email{$^{\ddag}$carlos.shahbazi@uni-hamburg.de}
 
\begin{abstract}
We obtain a correspondence between irreducible real parallel spinors on pseudo-Riemannian manifolds $(M,g)$ of signature $(4,3)$ and solutions of an associated differential system for three-forms that satisfy a homogeneous algebraic equation of order two in the Kähler-Atiyah bundle of $(M,g)$. Applying this general framework, we obtain an intrinsic algebraic characterization of $\mathrm{G}_2^*$-structures as well as the first explicit description of isotropic irreducible spinors in signature $(4,3)$ that are parallel under a general connection on the spinor bundle. This description is given in terms of a coherent system of mutually orthogonal and isotropic one-forms and follows from the characterization of the stabilizer of an isotropic spinor as the stabilizer of a highly degenerate three-form that we construct explicitly. Using this result, we show that isotropic spinors parallel under a metric connection with torsion exist when the connection preserves the aforementioned coherent system. This allows us to construct a natural class of metrics of signature $(4,3)$ on $\mathbb{R}^7$ that admit spinors parallel under a metric connection with torsion.\bigskip

\noindent
\emph{Keywords: Spin geometry, parallel spinors, spinorial exterior forms, $\mathrm{G}_2^*$-structures}\medskip

\noindent
\emph{MSC classification: Primary 53C27; Secondary 53C10, 53C50, 15A66}
\end{abstract} 

\maketitle

\setcounter{tocdepth}{1} 



\section*{Introduction}


This paper investigates real and irreducible parallel spinors on pseudo-Riemannian manifolds $(M,g)$ of signature $(4,3)$. By \emph{parallel spinor} we refer to spinors parallel under a general connection on a spinor bundle, where the latter is defined as a bundle of Clifford modules $S$ on $(M,g)$ equipped with an admissible bilinear pairing $\cB$ \cite{AC97,ACDVP05}. This general concept of parallel spinor subsumes many classes of special spinors that have been extensively studied in the literature on a case-by-case basis, such as parallel spinors, Killing spinors, Codazzi spinors, Cauchy spinors, skew-torsion parallel spinors, skew-Killing spinors or generalized Killing spinors, just to name a few \cite{BGM05,BM08,CS07,CD24,FM22,Galaev13,GHK21,Ikemakhen06,Ikemakhen07,Leitner03,MS14_Einstein,MS14_Graphs,MS14_Spheres,AM24}. In signatures $(p,q)$ congruent to zero, one or two modulo eight, hence covering the $(4,3)$ case, a general theory of parallel spinors can be developed by using the theory of \emph{spinorial exterior forms} \cite{CLS21,Sha24}. This theory is based on describing parallel spinors in terms of solutions to an equivalent differential system for an exterior form that is algebraically constrained in the K\"ahler-Atiyah bundle of the underlying pseudo-Riemannian manifold. This algebraically constrained exterior form is precisely the \emph{square} of the corresponding parallel spinor and for irreducible real spinors of real type was characterized in \cite{CLS21,Sha24} as an element of an explicit semi-algebraic body whose specific structure depends on the considered dimension and signature. In this note, we use this framework, which lies at the heart of the understanding of spinors as the \emph{square root of geometry}, whence contributing to its development in signature $(4,3)$. In this regard, we should note that even though the square of a spinor remains to be studied systematically, its use as a tool in spin geometry is classical and can be traced back at least to the seminal work of Dadok and Harvey on spinors and calibrations \cite{Harvey1990,DadokHarvey93} as well as to the foundational book of Michelsohn and Lawson \cite{Spin89}. We refer the reader to the more modern book \cite{Mei13} for more details about the use of exterior forms in spin geometry.

Pseudo-Riemannian geometry in signature $(4,3)$ made its first star appearance in the list of possible holonomy groups of non-symmetric irreducible pseudo-Riemannian manifolds \cite{Ber55,Bry87} through the Lie group $\mathrm{G}_2^*$, which is a non-compact real form of the complex exceptional Lie group $\G_2^{\mathbb{C}}$ and occurs as one of the possible holonomy groups of an irreducible and non-symmetric pseudo-Riemannian manifold of signature precisely $(4,3)$. Similarly to its more classical Riemannian counterpart, namely the compact Lie group $\mathrm{G}_2$, the Lie group $\mathrm{G}_2^*$ can be characterized as the stabilizer in $\mathrm{GL}(7,\R)$ of a special three-form $\phi$, whose orbit under the action of $\mathrm{GL}(7,\R)$ is open and therefore represents a \emph{stable} three-form in seven dimensions in the sense of \cite{Hitchin01}. As proven by Kath in \cite{Kat98,KathHabilitation}, the Lie group $\mathrm{G}_2^*$ also appears as the stabilizer in the identity component of the spin group $\Spin_0(4,3)$ of an irreducible real spinor of non-zero pseudo-norm in signature $(4,3)$, similarly to the description of $\G_2$ as the stabilizer of a non-zero irreducible real spinor in seven Euclidean dimensions, see for instance \cite{nearlyG2_97} and references therein. The pseudo-Riemannian $(4,3)$ case is, however, richer in that it contains another possibility that cannot occur in the Euclidean case, namely the case of an irreducible real spinor of zero pseudo-norm, to which we will refer as an \emph{isotropic spinor}. As proven by Kath in \cite{Kat98}, the stabilizer in $\Spin_0(4,3)$ of an isotropic spinor is isomorphic to a semi-direct product of $\mathrm{SL}(3,\mathbb{R})$ and a six-dimensional nilpotent Lie group, illustrating the diversity of irreducible real spinors in signature $(4,3)$. Ever since the seminal work of Kath, various references \cite{BK99,CLSS11,Kat13,Fre13,FL15,FK19,FFS22,LocalTypeI,LocalTypeII,LocalTypeIII} have studied special types of spinors and related holonomy reductions or special geometric structures in signature $(4,3)$, which however remains much less developed than its Riemannian counterpart. In this regard, we hope that this work can contribute towards establishing a general framework that can be applied to the study of the geometric problems associated with irreducible real spinors in signature $(4,3)$.


\subsection*{Main results and outline}

 
\begin{itemize}
\itemsep 0em
\item Section \ref{sec:algebraic_spinors} is devoted to computing the algebraic square of an irreducible real spinor, possibly of vanishing pseudo-norm, in signature $(4,3)$. This computation produces two new results. For non-isotropic spinors, it leads to an intrinsic characterization of $\G^{\ast}_2$-structures as solutions of a homogeneous algebraic equation of order two, see Theorem \ref{thm:stabilizerG2*}. This characterization is intrinsic since it does not involve any choice of privileged basis or \emph{standard form}. In the isotropic case, it leads to the description of the stabilizer of an isotropic irreducible spinor as the stabilizer of a special and highly degenerate three-form that we compute explicitly, see Theorem \ref{thm:null_spinor}. Key to these results is the algebraic characterization of spinorial exterior forms developed in \cite{Sha24}. 

\item In Section \ref{sec:differentialspinors} we study parallel spinors in signature $(4,3)$. First, we obtain a \emph{master formula} that describes a general irreducible real parallel spinor, perhaps of non-constant pseudo-norm, in terms of an equivalent differential system for its \emph{square} as a spinorial exterior form, see Proposition \ref{prop:intermediate_formula}. Elaborating on this result, in Theorem \ref{thm:master_formula} we present a characterization of the most general nowhere isotropic parallel spinor in terms of an algebraically constrained three-form satisfying a prescribed differential system, which we give explicitly. As a corollary, we obtain an \emph{obstruction} for a nowhere vanishing and isotropic spinor to be a parallel spinor relative to given data. We then consider the case of an isotropic parallel spinor, which we present in Theorem \ref{thm:null_diff_spinor} in terms of an equivalent differential system for an \emph{isotropic coherent system} of triples of one-forms, see Definition \ref{def:isotropicfamily}. As a particular case, we obtain a natural characterization of isotropic spinors parallel under a metric connection with torsion, see Corollary \ref{cor:spinor_with_torsion}, which we apply to construct a natural class of metrics on $\mathbb{R}^7$ that admit spinors parallel under a metric connection with torsion. Thanks to our formalism we can easily give explicit expressions for these metrics and the corresponding contorsion tensors, obtaining an obstruction for a general contorsion tensor to admit a connection preserving an isotropic spinor. To the best of our knowledge, these are the first results on isotropic irreducible spinors in signature $(4,3)$ since the seminal work of Kath \cite{Kat98}.
\end{itemize}

\noindent
As proposed originally by C.\ Lazaroiu, the theory we have developed here in signature $(4,3)$ can also be developed in seven and eight-dimensional Euclidean signatures, leading to new results \cite{LS24_Spin7} and to an alternative approach to the general theory developed in \cite{ACFH15}.

\begin{ack}
We are indebted to Vicente Cortés for explaining how to efficiently compute the stabilizer of a spinorial isotropic three-form, and more generally for his constant guidance and support. We also thank Diego Conti for his valuable comments on a preliminary version of this article.
\end{ack}



\section{Algebraic \texorpdfstring{$\mathrm{G}^{\ast}_2$}{G2*}-structures and spinorial exterior forms}
\label{sec:algebraic_spinors}


In this section, we obtain a novel and intrinsic algebraic characterization of the stabilizer of an irreducible real spinor in signature $(4,3)$. We will use this algebraic characterization in Section \ref{sec:differentialspinors} as the basis of our investigation on irreducible parallel spinors on a pseudo-Riemannian manifold of signature $(4,3)$. We begin with a description of the stabilizer of an irreducible real spinor in signature $(4,3)$ as presented in \cite{Kat98}.


\subsection{The stabilizer of an irreducible real spinor}\label{subsec:Kath}


Let $V$ be an oriented seven-dimensional real vector space equipped with a non-degenerate metric $h$ of signature $(4,3)$ and let $(V^*,h^*)$ be the quadratic space dual to $(V,h)$. Let $\mathrm{Cl}(4,3):=\mathrm{Cl}(V^*,h^*)$ be the Clifford algebra associated to $(V^*,h^*)$ defined in the convention:
\begin{equation*}
\theta^2=h^*(\theta,\theta)\, ,\quad\theta\in V^*\, .
\end{equation*} 

\noindent
Denote by $\pi$ the standard automorphism of $\mathrm{Cl}(4,3)$ and by $\tau$ the standard anti-automorphism. In signature $(4,3)$, the pseudo-Riemannian volume form $\nu\in\mathrm{Cl}(4,3)$ squares to the identity and belongs to the center of $\mathrm{Cl}(4,3)$. Therefore, $\mathrm{Cl}(4,3)$ is non-simple and splits as a direct sum of unital and associative algebras: 
\begin{equation}
\label{eq:splitting_Cl(4,3)}
\mathrm{Cl}(4,3)=\mathrm{Cl}_+(4,3)\oplus\mathrm{Cl}_-(4,3),
\end{equation} 

\noindent
where:
\begin{equation*}
\mathrm{Cl}_l(4,3)=\{x\in\mathrm{Cl}(4,3)\mid\nu x=lx\}=\tfrac{1}{2}(1+l\nu)\mathrm{Cl}(4,3),\quad l\in\{-1,+1\}.
\end{equation*}

\noindent
The algebra $\mathrm{Cl}_l(4,3)$ is simple and isomorphic to $\mathrm{Mat}(8,\R)$. Therefore, $\mathrm{Cl}(4,3)$ admits two irreducible left Clifford modules of real dimension eight: $$\gamma_l:\mathrm{Cl}(4,3)\to\mathrm{End}(\Sigma),$$ that correspond to the projection of $\mathrm{Cl}(4,3)$ to the factor $\mathrm{Cl}_l(4,3)$ composed with an isomorphism of the latter to the algebra $\mathrm{End}(\Sigma)$ of endomorphisms of an eight-dimensional real vector space $\Sigma$. These two Clifford modules are distinguished by the value they take at the volume form $\nu\in\mathrm{Cl}(4,3)$, namely: 
\begin{equation*}
\gamma_l(\nu)=l\,\mathrm{Id}\in\mathrm{End}(\Sigma).
\end{equation*} 

\noindent
It is shown in \cite[Proposition 2.1]{Sha24} that we can equip $\Sigma$ with a symmetric bilinear pairing $\Scr B$ that satisfies:
\begin{equation*}
\Scr B(\gamma_l(x)\varepsilon_1,\varepsilon_2)=\Scr B(\varepsilon_1,\gamma_l((\pi\circ\tau)(x))\varepsilon_2),\quad x\in\mathrm{Cl}(4,3),\quad \varepsilon_1,\varepsilon_2\in\Sigma,\quad l\in\{-1,+1\}.
\end{equation*}

\noindent
Such pairing is called an \emph{admissible pairing} \cite{AC97,ACDVP05}. The admissible pairing $\Scr B$ is invariant under the action of the identity component $\Spin_0(4,3)\subset\Spin(4,3)$ of the spin group $\Spin(4,3)$.

\begin{definition}
The triple $(\Sigma,\gamma_l,\Scr B)$ is an \emph{irreducible paired Clifford module} if $(\Sigma,\gamma_l)$ is an irreducible Clifford module and $\Scr B$ is an admissible pairing on $(\Sigma,\gamma_l)$.
\end{definition}

\noindent
It was shown in \cite[Corollary 2.1]{Kat98} that if $\varepsilon\in\Sigma$ is a spinor with $\Scr B(\varepsilon,\varepsilon)\neq 0$ then its stabilizer:
\begin{equation*}
\mathrm{Stab}(\varepsilon)=\{x\in\Spin_0(4,3)\mid\gamma_l(x)\varepsilon=\varepsilon\}
\end{equation*}

\noindent
in $\Spin_0(4,3)$ is the Lie group $\mathrm{G}_2^*$. This is the connected non-compact dual of the Lie group $\mathrm{G}_2$ with trivial center and fundamental group $\Z_2$. It was also shown in \cite[Corollary 2.1]{Kat98} that the Lie algebra of the stabilizer of a non-zero spinor of zero norm is a semi-direct product of a six-dimensional nilpotent Lie algebra and $\mathfrak{sl}(3,\R)$ (see also Proposition \ref{prop:stabilizer_isotropic}). We call a spinor of zero norm an \emph{isotropic spinor}, and we will refer to the reduction defined by an isotropic spinor as an \emph{isotropic spinorial structure}.

\begin{remark}\label{remark:decomposition_Sigma}
Given a spinor $\varepsilon\in\Sigma$ of non-zero norm, we can decompose $\Sigma = \escal{\varepsilon} \oplus \escal{\varepsilon}^\perp$ via:
\begin{equation*}
\Sigma\ni\eta=\tfrac{\Scr B(\eta,\varepsilon)}{\Scr B(\varepsilon,\varepsilon)}\varepsilon+\left(\eta-\tfrac{\Scr B(\eta,\varepsilon)}{\Scr B(\varepsilon,\varepsilon)}\varepsilon\right)\, .  
\end{equation*}

\noindent
It follows that the space $\escal{\varepsilon}^\perp$ is isomorphic to $\{v\cdot \varepsilon\mid v\in V\}$ by \cite[Corollary 2.2]{Kat98}.
\end{remark}

\noindent
Consider now the universal covering $\lambda:\Spin_0(4,3)\to\mathrm{SO}_0(4,3)$ and for ease of notation denote again by $\mathrm{G}_2^*$ the image of $\mathrm{G}_2^*\subset\Spin(4,3)$ in $\mathrm{SO}(4,3)$, namely $\lambda(\mathrm{G}_2^*) \subset \mathrm{SO}(4,3)$. As a subgroup of $\SO(4,3)$ and consequently of $\mathrm{GL}(7,\mathbb{R})$, the Lie group $\G^{\ast}_2$ can be described as the stabilizer of a special three-form $\phi\in\Lambda^3V^*$ given by:
\begin{equation}
\label{eq:canonicalform}
\phi=-e^{127}-e^{135}+e^{146}+e^{236}+e^{245}-e^{347}+e^{567}
\end{equation}

\noindent
in an adequate basis $(e^1, \hdots ,  e^7)$ of $V^{\ast}$. In particular, $\escal{\phi,\phi} = -7$. Then:
\begin{equation*}
\mathrm{G}_2^*=\{A\in \Aut(V)\mid A^*\phi=\phi\}.
\end{equation*} 

\noindent
For further reference, and following \cite[Proposition 2.4]{Kat98}, we describe below the decomposition of $\Lambda^k V^*$ into irreducible components of the $\mathrm{G}_2^*$-action: 

\begin{enumerate}
    \itemsep 0em
    \item $\Lambda^1V^*=\Lambda^1_7$ is irreducible.
    \item $\Lambda^2V^*=\Lambda^2_7\oplus\Lambda^2_{14}$, where: \begin{align*}
        \Lambda^2_{7}&=\{\omega\in\Lambda^2V^*\mid*(\phi\wedge\omega)=2\omega\}=\{\iota_v\phi\mid v\in V\},\\
        \Lambda^2_{14}&=\{\omega\in\Lambda^2V^*\mid*(\phi\wedge\omega)=-\omega\}\cong\mathfrak{g}_2^*.
    \end{align*}
    \item $\Lambda^3V^*=\Lambda^3_1\oplus\Lambda^3_7\oplus\Lambda^3_{27}$, where: \begin{align*}
        \Lambda^3_1&=\{t\phi\mid t\in\R\}=\escal{\phi},\\
        \Lambda^3_7&=\{*(\phi\wedge\theta)\mid\theta\in\Lambda^1V^*\},\\
        \Lambda^3_{27}&=\{\rho\in\Lambda^3V^*\mid\rho\wedge\phi=0,\,\rho\wedge*\phi=0\}.
    \end{align*}
\end{enumerate}

\noindent
This decomposition is completely analogous to the one occurring for $\G_2$-structures in seven Euclidean dimensions. As we will see later, this type of decomposition does no longer hold for isotropic spinorial structures. 


\subsection{Spinorial exterior forms}\label{subsec:Shabazi}


In this subsection, we briefly revisit the formalism of spinorial exterior forms that describes under which algebraic conditions an exterior form is the square of an irreducible real spinor. We follow the exposition of \cite{Sha24} for the signature $(p,q)=(4,3)$. In order to construct the \emph{square} of a spinor as an exterior form, we use the natural identification: 
\begin{equation}
\label{eq:natural_identification}
\Psi\colon (\Lambda V^*,\diamond) \to \mathrm{Cl}(4,3) 
\end{equation} 

\noindent
between $\mathrm{Cl}(4,3)$ and the K\"ahler-Atiyah algebra of $(V^*,h^*)$, which we denote by $(\Lambda V^*,\diamond)$ \cite{Chev54,Chev55}. The map $\diamond:\Lambda V^*\times\Lambda V^*\to\Lambda V^*$ denotes the \emph{geometric product} determined by $h$. This is given by the linear and associative extension of the following expression: 
\begin{equation*}
\theta\diamond\alpha=\theta\wedge\alpha+\iota_{\theta^\sharp}\alpha\, ,\quad\theta\in V^*\, , \quad \alpha\in\Lambda V^*.
\end{equation*} 

\noindent
In order to do computations with the geometric product it is convenient to introduce the \emph{generalized products} of $(V^*,h^*)$. These are the bilinear operators $\triangle_k:\Lambda^aV^*\times\Lambda^bV^*\to\Lambda^{a+b-2k}V^*$, where $k=0,\ldots,7$, defined through the expansion:
\begin{equation*}
\alpha\diamond\beta=\sum_{k=0}^7(-1)^{\binom{k+1}{2}+ak}\alpha\triangle_k\beta,\quad\alpha\in\Lambda^a V^*\, , \quad \beta\in\Lambda V^*.
\end{equation*}
 
\noindent
Choosing a basis $\{e_1,\ldots,e_7\}$ of $V$ we can express the generalized products as $$\alpha\triangle_k\beta=\tfrac{1}{k!}h^{i_1j_1}\cdots h^{i_kj_k}(\iota_{e_{i_1}}\ldots\iota_{e_{i_k}}\alpha)\wedge(\iota_{e_{j_1}}\ldots\iota_{e_{j_k}}\beta).$$

\noindent
We collect below some useful properties of the generalized products that we will use in later computations.

\begin{prop}[{\cite{LB13,LBC13,LBC16}}]
Let $\alpha\in\Lambda^aV^*$ and $\beta\in\Lambda^bV^*$. Then: 
\begin{itemize}
\itemsep 0em
\item $\alpha\triangle_k\beta=0$ if $k>a$ or $k>b$.
\item $\alpha\triangle_k\beta=(-1)^{(a-k)(b-k)}\beta\triangle_k\alpha$. In particular $\alpha\triangle_k\alpha=0$ if $a-k$ is odd.
\item $\alpha\triangle_0\beta=\alpha\wedge\beta$ and if $b=a$ then $\alpha\triangle_a\beta=\escal{\alpha,\beta}$.
\end{itemize}
\end{prop}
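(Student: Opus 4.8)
The plan is to establish all three items directly from the coordinate expression
$$\alpha\triangle_k\beta=\tfrac{1}{k!}h^{i_1j_1}\cdots h^{i_kj_k}(\iota_{e_{i_1}}\cdots\iota_{e_{i_k}}\alpha)\wedge(\iota_{e_{j_1}}\cdots\iota_{e_{j_k}}\beta),$$
using only elementary properties of interior products and the graded-commutativity of the wedge product, together with the degree bookkeeping that $\iota_{e_{i_1}}\cdots\iota_{e_{i_k}}\alpha$ has degree $a-k$ and $\iota_{e_{j_1}}\cdots\iota_{e_{j_k}}\beta$ has degree $b-k$, so that each term lands in $\Lambda^{a+b-2k}V^*$. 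For the first item I would observe that a $p$-fold interior product annihilates any form of degree strictly less than $p$: contracting the $a$-form $\alpha$ with $a$ interior products yields a scalar, and one further contraction gives zero. Hence $\iota_{e_{i_1}}\cdots\iota_{e_{i_k}}\alpha=0$ whenever $k>a$, and symmetrically the $\beta$-factor vanishes when $k>b$; in either case every term of the sum is zero.

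For the second item the key input is that the wedge product of a form of degree $a-k$ with one of degree $b-k$ is graded-commutative with sign $(-1)^{(a-k)(b-k)}$. Swapping the two wedge factors in the defining sum produces exactly this sign, after which I relabel the summation indices $i_\ell\leftrightarrow j_\ell$ and invoke the symmetry $h^{ij}=h^{ji}$ of the inverse metric to recover $\beta\triangle_k\alpha$. The specialization $\beta=\alpha$, $b=a$ then gives $\alpha\triangle_k\alpha=(-1)^{(a-k)^2}\alpha\triangle_k\alpha=(-1)^{a-k}\alpha\triangle_k\alpha$, using $(a-k)^2\equiv a-k\pmod 2$, so the product is forced to vanish whenever $a-k$ is odd.

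For the third item, the case $k=0$ is immediate: the product of metric factors is empty and no contractions occur, leaving $\alpha\triangle_0\beta=\alpha\wedge\beta$. For the top product with $b=a$, both contracted factors are scalars, so the wedge reduces to ordinary multiplication and
$$\alpha\triangle_a\beta=\tfrac{1}{a!}h^{i_1j_1}\cdots h^{i_aj_a}(\iota_{e_{i_1}}\cdots\iota_{e_{i_a}}\alpha)(\iota_{e_{j_1}}\cdots\iota_{e_{j_a}}\beta).$$
I would then check that the right-hand side is precisely the inner product $\escal{\alpha,\beta}$ induced by $h^*$ on $\Lambda^aV^*$, most transparently by evaluating both sides on decomposable forms $\alpha=\theta^1\wedge\cdots\wedge\theta^a$ and $\beta=\eta^1\wedge\cdots\wedge\eta^a$ and recognizing the result as the determinant $\det\big(h^*(\theta^r,\eta^s)\big)$, which is the standard formula for $\escal{\alpha,\beta}$; bilinearity then extends the identity to all of $\Lambda^aV^*$.

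The main obstacle I anticipate is bookkeeping rather than conceptual: keeping the combinatorial factor $1/k!$ and the repeated index contractions consistent, and in particular verifying in the last step that the normalization of the contraction formula matches the chosen convention for $\escal{\cdot,\cdot}$ exactly, with no spurious factorial or sign. This is the one place where the definition of the generalized product is genuinely pinned to the metric, so I would treat it with care and cross-check against the decomposable case before declaring the identity established.
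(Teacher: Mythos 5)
Your proof is correct. Note first that the paper itself offers no proof of this proposition: it is imported verbatim from the cited references \cite{LB13,LBC13,LBC16}, so there is no in-paper argument to compare against; your direct verification from the coordinate formula $\alpha\triangle_k\beta=\tfrac{1}{k!}h^{i_1j_1}\cdots h^{i_kj_k}(\iota_{e_{i_1}}\cdots\iota_{e_{i_k}}\alpha)\wedge(\iota_{e_{j_1}}\cdots\iota_{e_{j_k}}\beta)$ is the natural self-contained route and all three items go through as you describe. Item 1 is immediate degree counting; item 2 correctly combines graded commutativity of the wedge on the degrees $a-k$ and $b-k$ with the relabeling $i_\ell\leftrightarrow j_\ell$ justified by $h^{ij}=h^{ji}$, and the parity identity $(a-k)^2\equiv a-k\pmod 2$ settles the special case. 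For item 3, your caution about normalization is well placed and resolves correctly: on decomposables $\alpha=\theta^1\wedge\cdots\wedge\theta^a$, $\beta=\eta^1\wedge\cdots\wedge\eta^a$ the contraction $\frac{1}{a!}h^{i_1j_1}\cdots h^{i_aj_a}\,\alpha(e_{i_a},\ldots,e_{i_1})\,\beta(e_{j_a},\ldots,e_{j_1})$ produces each permutation term of $\det\bigl(h^*(\theta^r,\eta^s)\bigr)$ exactly $a!$ times, so the prefactor $1/a!$ yields the determinant on the nose (one can check $a=1,2$ by hand). One subtlety you glossed over but which causes no harm: the iterated interior product $\iota_{e_{i_1}}\cdots\iota_{e_{i_a}}\alpha$ evaluates $\alpha$ on the arguments in \emph{reversed} order, $\alpha(e_{i_a},\ldots,e_{i_1})$; since the same reversal occurs in the $\beta$-factor, the two potential signs cancel identically, so your identification with $\escal{\alpha,\beta}$ stands.
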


\noindent
Now we transport the splitting \eqref{eq:splitting_Cl(4,3)} of $\mathrm{Cl}(4,3)$ to $(\Lambda V^*,\diamond)$ through the natural identification \eqref{eq:natural_identification}. This gives: 
\begin{equation*}
 (\Lambda V^*,\diamond)=(\Lambda_+V^*,\diamond)\oplus(\Lambda_-V^*,\diamond),
\end{equation*}

\noindent
where:
\begin{equation*}
\Lambda_lV^*=\{\alpha\in\Lambda V^*\mid\nu\diamond\alpha=l\alpha\}=\{\alpha\in\Lambda V^*\mid*\tau(\alpha)=l\alpha\},\quad l\in\{-1,+1\}.
\end{equation*}
 
\noindent
Here we have used the identity:
\begin{equation}
\label{eq:product_volume}
\alpha\diamond\nu=\nu\diamond\alpha=*\tau(\alpha).
\end{equation}    

\noindent
By composing the natural identification $\Psi\colon (\Lambda V^*,\diamond) \to \mathrm{Cl}(4,3)$ with the irreducible representation $\gamma_l:\mathrm{Cl}(4,3)\to\mathrm{End}(\Sigma)$ we obtain a surjective morphism of unital associative real algebras that we denote by:
\begin{equation*}
\Psi_l:=\gamma_l\circ\Psi\colon (\Lambda V^*,\diamond)\to\mathrm{End}(\Sigma).
\end{equation*}

\noindent
Let $\cP_l:\Lambda V^*\to\Lambda_l V^*$ be the natural projection given explicitly by:
\begin{equation}
\label{eq:projection_l}
\cP_l(\alpha)=\tfrac{1}{2}(1+l\nu)\diamond\alpha=\tfrac{1}{2}(\alpha+l*\tau(\alpha)),
\end{equation} 

\noindent
where we have used the identity \eqref{eq:product_volume}, and consider the canonical linear inclusion $\iota_l:\Lambda_lV^*\hookrightarrow\Lambda V^*$, which is a right inverse to $\cP_l$. In particular:
\begin{equation*}
\Psi_l\circ\iota_l:(\Lambda_lV^*,\diamond)\to\mathrm{End}(\Sigma)
\end{equation*}

\noindent
is an isomorphism of unital and associative algebras. Following \cite{LBC13,LB13,LBC16}, we define: 
\begin{equation*}
\Lambda^<V^*:=\Lambda^0V^*\oplus\Lambda^1V^*\oplus\Lambda^2V^*\oplus\Lambda^3V^*\subset\Lambda V^*.
\end{equation*} 

\noindent
Note that $\Lambda V^*=\Lambda^<V^*\oplus*\Lambda^<V^*$. Then, restricting $\cP_l:\Lambda V^*\to\Lambda_lV^*$ to $\Lambda^<V^*$ we obtain an isomorphism of vector spaces that we can use to transport the algebra product in $(\Lambda_lV^*,\diamond)$ to $\Lambda^<V^*$. For every pair $\alpha,\beta\in\Lambda^<V^*$ we then have:
\begin{equation*}
\alpha\vee\beta=2\cP_<(\cP_l(\alpha\diamond\beta)),
\end{equation*}

\noindent
where $\cP_<:\Lambda V^*\to\Lambda^<V^*$ is the natural projection. By construction, $(\Lambda_lV^*,\diamond)$ and $(\Lambda^<V^*,\vee)$ are naturally isomorphic as unital and associative real algebras. Given an irreducible paired Clifford module $(\Sigma,\gamma_l,\Scr B)$ and a sign $\mu\in\{-1,+1\}$, we introduce the following quadratic map: 
\begin{equation*}
    \cE^\mu:\Sigma\to\End(\Sigma),\quad\varepsilon\mapsto\mu\,\varepsilon\otimes\varepsilon^*,
\end{equation*}

\noindent
where $\varepsilon^*:=\Scr B(-,\varepsilon)\in\Sigma^*$. Using the isomorphism of unital and associative algebras:
\begin{equation*}
    \Psi_l^<:=\Psi_l\circ\iota_l\circ\cP_l|_{\Lambda^<V^*}:(\Lambda^<V^*,\vee)\to\End(\Sigma)
\end{equation*}

\noindent
we define the \emph{spinor square map} associated to $(\Sigma,\gamma_l,\Scr B)$ by:
\begin{equation*}
\cE^\mu_l:=(\Psi_l^<)^{-1}\circ\cE^\mu:\Sigma\to\Lambda^<V^*.
\end{equation*}

\noindent
All together we obtain the following diagram: $$\begin{tikzcd}
                                                                                                           &  & \Sigma \arrow[d, "\mathcal{E}^\mu"'] \arrow[rrd, "\mathcal{E}_l^\mu"]                                                                     &  &                                                                                                           \\
{\mathrm{Cl}(4,3)} \arrow[rr, "\gamma_l"]                                                                  &  & \mathrm{End}(\Sigma)                                                                                                                      &  & {(\Lambda^<V^*,\vee)} \arrow[lldd, "\mathcal{P}_l|_{\Lambda^<V^*}"', shift right] \arrow[ll, "\Psi_l^<"'] \\
                                                                                                           &  &                                                                                                                                           &  &                                                                                                           \\
{(\Lambda V^*,\diamond)} \arrow[uu, "\Psi"] \arrow[rr, "\mathcal{P}_l", shift left] \arrow[rruu, "\Psi_l"] &  & {(\Lambda_lV^*,\diamond)} \arrow[ll, "\iota_l", shift left] \arrow[rruu, "2\mathcal{P}_<"', shift right] \arrow[uu, "\Psi_l\circ\iota_l"] &  &                                                                                                          
\end{tikzcd}$$

\noindent
Given a spinor $\varepsilon\in\Sigma$, the exterior forms $\cE_l^+(\varepsilon)$ and $\cE_l^-(\varepsilon)$ are, respectively, the positive and negative \emph{squares} of $\varepsilon$ relative to the admissible pairing $\Scr B$. We will say that an exterior form $\alpha\in\Lambda^<V^*$ is the \emph{square} of a spinor $\varepsilon\in\Sigma$ if $\alpha=\cE_l^\mu(\varepsilon)$ for some $\mu\in\{-1,+1\}$, and we will refer generically to elements in the image of $\cE^\mu_l$ as \emph{spinorial exterior forms}. One of the key properties of the spinor square map, that we will use extensively, is that it is equivariant in the appropriate sense.

\begin{prop} 
\label{prop:equivariance}
The spinor square map $\cE^\mu_l:=(\Psi_l^<)^{-1}\circ\cE^\mu:\Sigma\to\Lambda^<V^*$ is equivariant with respect to the double covering morphism $\lambda\colon \Spin_0(4,3) \to \SO_0(4,3)$, that is:
\begin{equation*}
\cE^\mu_l(\gamma_l(x)\varepsilon) = \lambda(x)(\cE^\mu_l(\varepsilon))\quad \forall\,\, x\in \Spin_0(4,3)\quad \forall\,\, \varepsilon \in \Sigma,
\end{equation*}

\noindent
where the right-hand side denotes the natural action of $\lambda(x)\in\SO_0(4,3)$ on $\Lambda V^{\ast}$.
\end{prop}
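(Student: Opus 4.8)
The plan is to exploit the fact that $\Psi_l^<\colon (\Lambda^<V^*,\vee)\to\End(\Sigma)$ is an isomorphism of algebras, hence in particular injective: it therefore suffices to prove the asserted identity after applying $\Psi_l^<$ to both sides, that is, to verify the equality of endomorphisms
\[
\Psi_l^<\big(\cE^\mu_l(\gamma_l(x)\varepsilon)\big)=\Psi_l^<\big(\lambda(x)(\cE^\mu_l(\varepsilon))\big)\in\End(\Sigma).
\]
Since $\cE^\mu_l=(\Psi_l^<)^{-1}\circ\cE^\mu$, the left-hand side is simply $\cE^\mu(\gamma_l(x)\varepsilon)$, so the whole statement reduces to matching the spinorial side and the exterior-form side of this equation, both rewritten as conjugations by $\gamma_l(x)$. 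Note first that $\lambda(x)$ acts degree by degree on $\Lambda V^*$ and hence preserves $\Lambda^<V^*=\Lambda^0V^*\oplus\cdots\oplus\Lambda^3V^*$, so the right-hand side genuinely lies in the domain of $\Psi_l^<$ and the statement is well posed.

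For the spinorial side I would compute the rank-one endomorphism $\cE^\mu(\gamma_l(x)\varepsilon)=\mu\,(\gamma_l(x)\varepsilon)\otimes(\gamma_l(x)\varepsilon)^*$ explicitly. The key input is that an element $x\in\Spin_0(4,3)$ satisfies $\tau(x)x=1$ and $\pi(x)=x$, so that $(\pi\circ\tau)(x)=x^{-1}$. Combining this with the admissibility identity for $\Scr B$ and the symmetry of $\Scr B$, one finds for every $\eta\in\Sigma$ that
\[
(\gamma_l(x)\varepsilon)^*(\eta)=\Scr B(\eta,\gamma_l(x)\varepsilon)=\Scr B\big(\gamma_l(x^{-1})\eta,\varepsilon\big)=\big(\varepsilon^*\circ\gamma_l(x)^{-1}\big)(\eta),
\]
that is, $(\gamma_l(x)\varepsilon)^*=\varepsilon^*\circ\gamma_l(x)^{-1}$. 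Substituting this into $\mu\,(\gamma_l(x)\varepsilon)\otimes(\gamma_l(x)\varepsilon)^*$ and factoring out $\gamma_l(x)$ on the left and $\gamma_l(x)^{-1}$ on the right gives
\[
\cE^\mu(\gamma_l(x)\varepsilon)=\gamma_l(x)\,\cE^\mu(\varepsilon)\,\gamma_l(x)^{-1}.
\]

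For the exterior-form side I would show that $\Psi_l^<$ intertwines the natural $\SO_0(4,3)$-action with conjugation in $\End(\Sigma)$, namely $\Psi_l^<(\lambda(x)\beta)=\gamma_l(x)\,\Psi_l^<(\beta)\,\gamma_l(x)^{-1}$ for all $\beta\in\Lambda^<V^*$. This rests on two compatibilities. First, because $x$ is even, the vector representation is $\lambda(x)\theta=x\diamond\theta\diamond x^{-1}$, and extending by the algebra-automorphism property yields $\Psi(\lambda(x)\alpha)=x\,\Psi(\alpha)\,x^{-1}$ in $\mathrm{Cl}(4,3)$; applying $\gamma_l$ turns this into $\Psi_l(\lambda(x)\alpha)=\gamma_l(x)\,\Psi_l(\alpha)\,\gamma_l(x)^{-1}$. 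Second, since $\nu$ is central and $x$ is invertible, conjugation by $x$ commutes with left multiplication by $\tfrac12(1+l\nu)$, whence $\cP_l\circ\lambda(x)=\lambda(x)\circ\cP_l$ and $\lambda(x)$ preserves $\Lambda_lV^*$. Recalling $\Psi_l^<=\Psi_l\circ\iota_l\circ\cP_l|_{\Lambda^<V^*}$, these combine to the desired intertwining; applying it to $\beta=\cE^\mu_l(\varepsilon)$ gives $\Psi_l^<\big(\lambda(x)(\cE^\mu_l(\varepsilon))\big)=\gamma_l(x)\,\cE^\mu(\varepsilon)\,\gamma_l(x)^{-1}$, which matches the spinorial side, and injectivity of $\Psi_l^<$ then concludes the argument. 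The step demanding the most care is precisely this compatibility of the action with the projection $\cP_l$ and the transported product on $\Lambda^<V^*$: one must check that passing to the reduced model $(\Lambda^<V^*,\vee)$ does not spoil equivariance, which is exactly what the centrality of $\nu$ guarantees.
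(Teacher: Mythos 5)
Your proof is correct: the reduction via injectivity of $\Psi_l^<$, the computation $\cE^\mu(\gamma_l(x)\varepsilon)=\gamma_l(x)\,\cE^\mu(\varepsilon)\,\gamma_l(x)^{-1}$ from admissibility together with $(\pi\circ\tau)(x)=x^{-1}$ on $\Spin_0(4,3)$, and the intertwining $\Psi_l^<(\lambda(x)\beta)=\gamma_l(x)\Psi_l^<(\beta)\gamma_l(x)^{-1}$ (using centrality of $\nu$ so that $\cP_l$ commutes with the action) is exactly the standard argument. The paper itself states this proposition without proof, importing it from the formalism of \cite{Sha24}, and your argument reproduces that proof, so there is nothing to flag.
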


\noindent
Hence, roughly speaking the stabilizers of a spinor and its associated spinorial exterior form can differ at most by a $\mathbb{Z}_2$ quotient. The following result, which follows as a particular case of \cite[Theorem 2.8]{Sha24}, see also \cite{LBC16,LBC13}, gives the algebraic characterization of spinors in terms of exterior forms in signature $(4,3)$ that is at the basis of our results.

\begin{cor}
\label{cor:characterization_p-q=1}
Let $(\Sigma,\gamma_l,\Scr B)$ be an irreducible paired Clifford module. The following statements are equivalent for an exterior form $\alpha\in\Lambda^<V^*$: 
\begin{enumerate}
\itemsep 0em
\item $\alpha$ is the square of a spinor $\varepsilon\in\Sigma$, namely $\alpha=\cE_l^\mu(\varepsilon)$ for some $\mu\in\{-1,+1\}$.
\item $\alpha$ satisfies the following relations: $$\alpha\vee\alpha=8\alpha^{(0)}\alpha,\quad(\pi \circ\tau)(\alpha)= \alpha,\quad\alpha\vee\beta\vee\alpha=8(\alpha\vee\beta)^{(0)}\alpha,$$ for a fixed exterior form $\beta\in\Lambda^<V^*$ such that $(\alpha\vee\beta)^{(0)}\neq0$.
\item The following relations hold: $$(\pi\circ\tau)(\alpha)= \alpha,\quad\alpha\vee\beta\vee\alpha=8(\alpha\vee\beta)^{(0)}\alpha,$$ for every exterior form $\beta\in\Lambda^<V^*$.
\end{enumerate}
\end{cor}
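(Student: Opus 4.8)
The plan is to transport the entire statement into the endomorphism algebra $\End(\Sigma)$ through the algebra isomorphism $\Psi_l^<\colon(\Lambda^<V^*,\vee)\to\End(\Sigma)$, under which the four ingredients of the corollary acquire a transparent meaning. Writing $E:=\Psi_l^<(\alpha)$ and recalling that $\Psi_l^<$ intertwines $\vee$ with composition, the square $\cE_l^\mu(\varepsilon)$ corresponds to the rank-one endomorphism $\mu\,\varepsilon\otimes\varepsilon^*$ with $\varepsilon^*=\Scr B(-,\varepsilon)$. Thus the corollary should reduce to the elementary linear-algebra fact that an endomorphism of $\Sigma$ has the form $\mu\,\varepsilon\otimes\varepsilon^*$ if and only if it is self-adjoint with respect to $\Scr B$ and has rank at most one. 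The first task is therefore to set up this dictionary precisely.

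Two normalizations must be established. First I would show that the degree-zero projection is the normalized trace, $(\gamma)^{(0)}=\tfrac18\tr(\Psi_l^<(\gamma))$ for all $\gamma\in\Lambda^<V^*$. Since $\Psi_l^<$ sends the unit $1$ to $\Id$ and conjugation by $\gamma_l(x)$ corresponds to the natural $\SO_0(4,3)$-action on forms (Proposition \ref{prop:equivariance}), the functional $\gamma\mapsto\tr(\Psi_l^<(\gamma))$ is an $\SO_0(4,3)$-invariant linear form on $\Lambda^<V^*=\bigoplus_{k=0}^3\Lambda^kV^*$; as there are no nonzero invariant forms in degrees $1,2,3$, it is a multiple of $(\cdot)^{(0)}$, and evaluating on $1$ fixes the constant to be $\dim_\R\Sigma=8$. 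Second, the defining relation $\Scr B(\gamma_l(x)\varepsilon_1,\varepsilon_2)=\Scr B(\varepsilon_1,\gamma_l((\pi\circ\tau)(x))\varepsilon_2)$ of the admissible pairing shows that the $\Scr B$-adjoint $E^\dagger$ of $\Psi_l^<(\alpha)$ equals $\Psi_l^<((\pi\circ\tau)(\alpha))$, so that $(\pi\circ\tau)(\alpha)=\alpha$ translates exactly into $E=E^\dagger$.

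With the dictionary in place, the implication $(1)\Rightarrow(3)$ is a direct computation: for $E=\mu\,\varepsilon\otimes\varepsilon^*$ one checks $EFE=\tr(EF)\,E$ for every $F\in\End(\Sigma)$ and $E^\dagger=E$, which under the dictionary are precisely $\alpha\vee\beta\vee\alpha=8(\alpha\vee\beta)^{(0)}\alpha$ for all $\beta$ and $(\pi\circ\tau)(\alpha)=\alpha$. The implication $(3)\Rightarrow(2)$ follows by specializing $\beta=1$ to recover $\alpha\vee\alpha=8\alpha^{(0)}\alpha$, together with the observation that for $\alpha\neq0$ nondegeneracy of the trace pairing furnishes a $\beta$ with $(\alpha\vee\beta)^{(0)}\neq0$. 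The decisive implication is $(2)\Rightarrow(1)$, and here the key point is that the single relation $EF_0E=\tr(EF_0)E$ with $\tr(EF_0)\neq0$ already forces $\rk E=1$: restricting it to $\im E$ shows that $EF_0$ acts on $\im E$ as the scalar $\tr(EF_0)$, and since $\im(EF_0)\subseteq\im E$ one gets $\tr(EF_0)=\tr(EF_0)\cdot\rk E$, whence $\rk E=1$. Combining rank one with self-adjointness and using nondegeneracy of $\Scr B$ yields $E=\mu\,\varepsilon\otimes\varepsilon^*$ after rescaling $\varepsilon$ and absorbing the sign into $\mu$, that is, $\alpha=\cE_l^\mu(\varepsilon)$.

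The main obstacle I anticipate is not the rank-one reduction, which is clean, but the bookkeeping needed to justify the two normalizations rigorously: one must verify that $\pi\circ\tau$ preserves $\Lambda^<V^*$ and is compatible with the projection $\cP_l$ and the inclusion $\iota_l$ entering the definition of $\Psi_l^<$, and that the trace functional genuinely descends through these identifications with the stated constant. Once these compatibilities are checked -- which is exactly the content specialized from \cite[Theorem 2.8]{Sha24} -- the three equivalences follow formally from the endomorphism-algebra argument above.
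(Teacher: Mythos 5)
Your proof is correct, but it is worth noting that the paper itself gives no argument here: Corollary \ref{cor:characterization_p-q=1} is stated as a direct specialization of \cite[Theorem 2.8]{Sha24}, so the ``paper's proof'' is a citation. What you have done is reconstruct, self-contained, the proof that lives in the cited reference, and your route is the natural one: transport everything through the algebra isomorphism $\Psi_l^<\colon(\Lambda^<V^*,\vee)\to\End(\Sigma)$, identify $(\cdot)^{(0)}$ with $\tfrac18\tr$, identify $(\pi\circ\tau)$-invariance with $\Scr B$-self-adjointness, and reduce the corollary to the statement that $E\in\End(\Sigma)$ equals $\mu\,\varepsilon\otimes\varepsilon^*$ iff it is self-adjoint and satisfies $EF_0E=\tr(EF_0)E$ for one $F_0$ with $\tr(EF_0)\neq0$; your block-triangular trace argument $\tr(EF_0)=\tr(EF_0)\,\rk E$ forcing $\rk E=1$ is clean, and the self-adjointness step correctly uses the symmetry of $\Scr B$ (valid in signature $(4,3)$ by \cite[Proposition 2.1]{Sha24}) to force $E=t\,\varepsilon\otimes\varepsilon^*$ with $t\in\R^\times$, absorbed into $\mu$ and a rescaling. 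Two small points of bookkeeping. First, your trace normalization invokes Proposition \ref{prop:equivariance}, which strictly concerns equivariance of the spinor square map, not of $\Psi_l$ under conjugation; the fact you actually need, $\Psi_l(\lambda(x)\gamma)=\gamma_l(x)\Psi_l(\gamma)\gamma_l(x)^{-1}$, is true and standard, but the normalization is obtained more elementarily by noting that $\Psi_l$ sends a product of $k\geq1$ distinct generators to a traceless endomorphism, so only the degree-zero component contributes and $\tr(\Id)=8$ fixes the constant. Second, the case $\alpha=0$ (equivalently $\varepsilon=0$) satisfies (1) and (3) but not (2); this degenerate case is an artifact of the statement itself, which implicitly takes $\varepsilon\neq0$, and your restriction to $\alpha\neq0$ in the step $(3)\Rightarrow(2)$ is the right way to handle it. What your version buys over the paper's citation is a transparent, purely linear-algebraic proof readable without unpacking \cite{Sha24}; what it omits, reasonably, is the signature bookkeeping ($p-q\equiv_8 1$, simplicity of $\Cl_l(4,3)\cong\Mat(8,\R)$, and the symmetry type of $\Scr B$) that the general theorem in \cite{Sha24} organizes across all signatures.
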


\noindent
As it follows from the second point above, if $\varepsilon$ is of non-zero pseudo-norm then it suffices to take $\beta = 1$. This is in general not enough if $\varepsilon$ is isotropic and illustrates the difficulty of this case in relation to the generic case of non-zero pseudo-norm.


\subsection{The non-degenerate algebraic characterization}
\label{subsec:algebraic_char}


In this subsection, we use Corollary \ref{cor:characterization_p-q=1} to give an intrinsic algebraic characterization of $\mathrm{G}_2^*$-structures in terms of exterior forms. An element $\alpha\in\Lambda^<V^*$ can be expressed as $\alpha=\alpha^{(0)}+\alpha^{(1)}+\alpha^{(2)}+\alpha^{(3)}$, where $\alpha^{(j)}\in\Lambda^jV^*$ for $j=0,1,2,3$. Recall that $$\pi(\alpha)=(-1)^a\alpha,\quad\tau(\alpha)=(-1)^{\binom{a}{2}}\alpha,\quad\alpha\in\Lambda^aV^*.$$

\noindent
The condition $(\pi\circ\tau)(\alpha)=\alpha$ is linear and simply implies that $\alpha^{(1)}=\alpha^{(2)}=0$. Set $c:=\alpha^{(0)}\in\R=\Lambda^0V^*$ and $\phi:=\alpha^{(3)}\in\Lambda^3V^*$. Now we compute $\alpha\vee\alpha$. First, we need to obtain $\alpha\diamond\alpha$: 
\begin{equation*}
\alpha\diamond\alpha=(c+\phi)\diamond(c+\phi)=c^2+2c\phi+\phi\diamond\phi=c^2+2c\phi+\phi\triangle_1\phi-\escal{\phi,\phi}.
\end{equation*}

\noindent
Using now \eqref{eq:projection_l} together with the previous expression, we obtain: 
\begin{equation*}
\cP_l(\alpha\diamond\alpha)=\tfrac{1}{2}\big(c^2+2c\phi+\phi\triangle_1\phi -\escal{\phi,\phi}+l(*c^2+2c*\tau(\phi)+*\tau(\phi\triangle_1\phi)-*\escal{\phi,\phi})\big),
\end{equation*}

\noindent
from which we conclude:
\begin{equation*}
    \alpha\vee\alpha=2\cP_<(\cP_l(\alpha\diamond\alpha))=c^2+2c\phi-\escal{\phi,\phi}+l*(\phi\triangle_1\phi),
\end{equation*}

\noindent
where we have used that $\tau(\phi\triangle_1\phi)=\phi\triangle_1\phi\in\Lambda^4V^*$. Hence the condition $\alpha\vee\alpha=8\alpha^{(0)}\alpha$ implies:
\begin{equation*}
\escal{\phi,\phi}=-7c^2,\quad 6c\phi=l*(\phi\triangle_1\phi).
\end{equation*}

\noindent
Summarizing this discussion we obtain the following result.

\begin{prop}
\label{prop:algebraic_characterization}
An exterior form $\alpha\in\Lambda^<V^*$ is the square of a spinor $\varepsilon\in\Sigma$ only if $\alpha=c+\phi$, with $c\in\R$ and $\phi\in\Lambda^3V^*$, satisfying:
\begin{equation}
\label{eq:algebraic_characterization}
\escal{\phi,\phi}=-7c^2,\quad 6c\phi=l*(\phi\triangle_1\phi).
\end{equation}

\noindent
If in addition $\cB(\varepsilon,\varepsilon) \neq 0$, then $\escal{\phi,\phi} < 0$ and the previous condition is also sufficient.
\end{prop}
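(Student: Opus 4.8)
The plan is to prove the ``only if'' direction and the ``if and only if'' refinement separately, treating the forward direction as a direct reading of the computation already recorded above and reserving the real work for the sign claim and the converse. For the ``only if'' part, let $\alpha=\cE^\mu_l(\varepsilon)$. By Corollary~\ref{cor:characterization_p-q=1} we have $(\pi\circ\tau)(\alpha)=\alpha$, which is linear and forces $\alpha^{(1)}=\alpha^{(2)}=0$, so $\alpha=c+\phi$ with $c=\alpha^{(0)}$ and $\phi=\alpha^{(3)}$. The same corollary gives $\alpha\vee\alpha=8\alpha^{(0)}\alpha$. Substituting the expression $\alpha\vee\alpha=c^2+2c\phi-\escal{\phi,\phi}+l*(\phi\triangle_1\phi)$ obtained above and comparing homogeneous components — degree $0$ against $8c^2$ and degree $3$ against $8c\phi$, using that $*(\phi\triangle_1\phi)\in\Lambda^3V^*$ — yields precisely the two relations of \eqref{eq:algebraic_characterization}. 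Since these degreewise comparisons are genuine equivalences, I will reuse them verbatim for the converse.

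Next I would pin down the sign of $\escal{\phi,\phi}$ by relating $c=\alpha^{(0)}$ to the pseudo-norm through a trace in $\End(\Sigma)$. Because $\Psi^<_l$ is a unital isomorphism of algebras it sends $1\mapsto\Id_\Sigma$, whereas every homogeneous component of positive degree at most $3$ is a product of distinct Clifford generators and hence maps to a trace-free endomorphism. Therefore $\alpha^{(0)}=\tfrac{1}{8}\Tr\Psi^<_l(\alpha)$. Applying this to $\Psi^<_l(\alpha)=\cE^\mu(\varepsilon)=\mu\,\varepsilon\otimes\varepsilon^*$, whose trace equals $\mu\,\varepsilon^*(\varepsilon)=\mu\,\Scr B(\varepsilon,\varepsilon)$, gives $c=\tfrac{\mu}{8}\Scr B(\varepsilon,\varepsilon)$. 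In particular $\Scr B(\varepsilon,\varepsilon)\neq0$ if and only if $c\neq0$, and then the first relation in \eqref{eq:algebraic_characterization} reads $\escal{\phi,\phi}=-7c^2<0$, as claimed.

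For sufficiency, suppose $\alpha=c+\phi$ with $c\neq0$ satisfies \eqref{eq:algebraic_characterization}; I verify the hypotheses of Corollary~\ref{cor:characterization_p-q=1}(2) with the distinguished element $\beta=1$. The condition $(\pi\circ\tau)(\alpha)=\alpha$ holds automatically since $\pi\circ\tau$ acts as $+1$ in degrees $0$ and $3$. Running the degreewise comparison of the first paragraph backwards, the two relations of \eqref{eq:algebraic_characterization} reassemble into $\alpha\vee\alpha=8\alpha^{(0)}\alpha$. Finally, since $1$ is the unit of $(\Lambda^<V^*,\vee)$ we have $\alpha\vee1\vee\alpha=\alpha\vee\alpha$ and $(\alpha\vee1)^{(0)}=c\neq0$, so the third condition coincides with the first and the nondegeneracy requirement $(\alpha\vee\beta)^{(0)}\neq0$ is met. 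Corollary~\ref{cor:characterization_p-q=1} then produces a spinor $\varepsilon$ with $\alpha=\cE^\mu_l(\varepsilon)$, which completes the argument.

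I expect the only genuinely nontrivial step to be the trace identity $\alpha^{(0)}=\tfrac{1}{8}\Tr\Psi^<_l(\alpha)$, that is, the vanishing of the trace of products of distinct Clifford generators of degree between $1$ and $3$; the remaining steps are bookkeeping with the already-computed $\alpha\vee\alpha$. Conceptually, the crucial point is that $c\neq0$ is exactly what licenses the simple choice $\beta=1$ in Corollary~\ref{cor:characterization_p-q=1}, so the nonvanishing of the pseudo-norm is precisely the feature that makes the characterization sufficient — and it is this simplification that one should expect to fail in the isotropic case.
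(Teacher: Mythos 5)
Your proof is correct and follows essentially the same route as the paper's: the degreewise comparison of $\alpha\vee\alpha$ with $8\alpha^{(0)}\alpha$ (which, as you note, is an equivalence and so handles both directions) together with the choice $\beta=1$ in Corollary \ref{cor:characterization_p-q=1}, licensed exactly by $c\neq 0$. The only difference is cosmetic: where the paper invokes the explicit reconstruction formula from \cite{Sha24} to read off $c=\tfrac{\mu}{8}\Scr B(\varepsilon,\varepsilon)$, you derive the same identity via the (standard and correct) trace argument $\alpha^{(0)}=\tfrac{1}{8}\Tr\,\Psi^{<}_l(\alpha)$ applied to $\mu\,\varepsilon\otimes\varepsilon^{*}$.
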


\begin{proof}
If $\cB(\varepsilon,\varepsilon) \neq 0$ then $c\neq0$ as explained below, and thus taking $\beta=1$ in Corollary \ref{cor:characterization_p-q=1} we obtain an equivalence since in this case $(\alpha\vee\beta)^{(0)}=\alpha^{(0)}=c\neq0$.
\end{proof}

\noindent
Hence, elaborating on Equation \eqref{eq:algebraic_characterization}, we conclude that $\alpha\in\Lambda^<V^*$ is the square of an irreducible real spinor in signature $(4,3)$ only if:
\begin{equation*}
\alpha = \kappa \sqrt{-\frac{\escal{\phi,\phi}}{7}}  + \phi,
\end{equation*}

\noindent
where $\kappa\in\{-1,+1\}$ is a sign and $\phi \in \Lambda^3 V^{\ast}$ satisfies:
\begin{equation*}
  6\,\kappa \sqrt{-\frac{\escal{\phi,\phi}}{7}}\,\phi=l*(\phi\triangle_1\phi).
\end{equation*}

\noindent
On the other hand, if $\alpha\in \Lambda^{<} V^{\ast}$ is the positive square of $\varepsilon\in \Sigma$, then from the general theory of spinorial exterior forms, see \cite{Sha24}, we obtain:
\begin{equation*}
\alpha = \frac{1}{8} \sum_{k=0}^{3}  \,\sum_{i_1 < \dots < i_k} \cB(\Psi_l^{<}(e^{i_k})^{-1} \cdots \Psi_l^{<}(e^{i_1})^{-1}\varepsilon,\varepsilon)\, e^{i_1}\wedge \hdots \wedge e^{i_k}   
\end{equation*}

\noindent
and thus:
\begin{equation*}
\cB(\varepsilon,\varepsilon)^2 = -\frac{64}{7} \escal{\phi,\phi}   
\end{equation*}

\noindent
and if $\cB(\varepsilon,\varepsilon) \neq 0$ then $\kappa$ is the sign of the pseudo-norm $\cB(\varepsilon,\varepsilon)$ of $\varepsilon$. Finally, Propositions \ref{prop:equivariance} and \ref{prop:algebraic_characterization} imply the following intrinsic characterization of $\G^{\ast}_2$-structures.

\begin{thm}
\label{thm:stabilizerG2*}
The stabilizer of a three-form $\phi\in \Lambda^3 V^{\ast}$ is $\G^{\ast}_2\subset \SO_0(4,3)$ if and only if $\escal{\phi,\phi}< 0$ and:
\begin{equation}
\label{eq:intrinsicG2}
  6\,\kappa \sqrt{-\frac{\escal{\phi,\phi}}{7}}\,\phi=l*(\phi\triangle_1\phi).
\end{equation}

\noindent
If that is the case, then the metric induced by $\phi$ is homothetic to the underlying metric $h$. If in addition $\escal{\phi,\phi} = -7$ then the metric induced by $\phi$ is precisely $h$.
\end{thm}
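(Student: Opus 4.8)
The plan is to reduce the statement to three inputs established earlier: the algebraic characterization of spinor squares (Proposition \ref{prop:algebraic_characterization}), the equivariance of the spinor square map (Proposition \ref{prop:equivariance}), and Kath's identification of the stabilizer of a non-isotropic irreducible spinor with $\G_2^*$. Throughout I would write $\phi_0$ for the canonical form \eqref{eq:canonicalform}, so that $\escal{\phi_0,\phi_0}=-7$ and $\Lambda^3_1=\escal{\phi_0}$, reserving $\phi$ for the form in the theorem.

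For the backward implication ($\Leftarrow$), assume $\escal{\phi,\phi}<0$ and \eqref{eq:intrinsicG2}. I would set $c:=\kappa\sqrt{-\escal{\phi,\phi}/7}\neq0$, so that $\alpha:=c+\phi$ satisfies both relations in \eqref{eq:algebraic_characterization}: the first, $\escal{\phi,\phi}=-7c^2$, holds by the definition of $c$, and the second is precisely \eqref{eq:intrinsicG2}. Since $\alpha^{(0)}=c\neq0$, the sufficiency clause of Proposition \ref{prop:algebraic_characterization} (equivalently, Corollary \ref{cor:characterization_p-q=1} with $\beta=1$) gives $\alpha=\cE_l^\mu(\varepsilon)$ for some $\varepsilon\in\Sigma$ with $\cB(\varepsilon,\varepsilon)\neq0$. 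Because the $\SO_0(4,3)$-action preserves the grading of $\Lambda V^*$ and fixes the scalar $c$, an element $g=\lambda(x)$ fixes $\alpha$ if and only if it fixes $\phi$; by Proposition \ref{prop:equivariance} this is equivalent to $\cE_l^\mu(\gamma_l(x)\varepsilon)=\cE_l^\mu(\varepsilon)$. The one computational point here is the fiber analysis of the square map for a non-isotropic spinor: from the endomorphism identity $\gamma_l(x)\varepsilon\otimes(\gamma_l(x)\varepsilon)^*=\varepsilon\otimes\varepsilon^*$, evaluating on $\varepsilon$ and using $\cB(\varepsilon,\varepsilon)\neq0$ forces $\gamma_l(x)\varepsilon=\pm\varepsilon$.

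It then remains to pass from the spinor stabilizer to $\mathrm{Stab}(\phi)\subset\SO_0(4,3)$. The central element $-1\in\Spin_0(4,3)$ acts as $-\Id$ on $\Sigma$ and as the identity in $\SO_0(4,3)$, so $\{x:\gamma_l(x)\varepsilon=\pm\varepsilon\}=\{\pm1\}\cdot\mathrm{Stab}(\varepsilon)$ and its image under $\lambda$ coincides with $\lambda(\mathrm{Stab}(\varepsilon))$. By Kath's theorem $\mathrm{Stab}(\varepsilon)=\G_2^*$, whence $\mathrm{Stab}(\phi)=\lambda(\G_2^*)=\G_2^*$. Keeping track of this $\Z_2$ so as to obtain an equality of subgroups of $\SO_0(4,3)$, rather than merely an isomorphism up to a finite quotient, is the only genuine subtlety of this direction.

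For the forward implication ($\Rightarrow$), assume $\mathrm{Stab}(\phi)=\G_2^*$. Then $\phi$ is fixed by $\G_2^*$, hence lies in the trivial summand $\Lambda^3_1=\escal{\phi_0}$ of the decomposition recalled in Section \ref{subsec:Kath}; thus $\phi=t\phi_0$ with $t\neq0$ (if $t=0$ the stabilizer would be all of $\SO_0(4,3)$), and $\escal{\phi,\phi}=-7t^2<0$. Since $\phi_0$ is the degree-three part of the square of a non-isotropic spinor, it satisfies \eqref{eq:intrinsicG2} with some sign $\kappa_0$; both sides of \eqref{eq:intrinsicG2} are homogeneous of degree two in $\phi$ (the factor $\sqrt{-\escal{\phi,\phi}/7}$ supplying the missing degree on the left), so rescaling $\phi_0\mapsto t\phi_0$ and absorbing $\sign(t)$ into the sign yields \eqref{eq:intrinsicG2} for $\phi$ with $\kappa=\kappa_0\,\sign(t)$. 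Finally, for the metric statements: both $g_\phi$ and $h$ are $\G_2^*$-invariant symmetric bilinear forms on the absolutely irreducible seven-dimensional module $V$, so Schur's lemma forces $g_\phi=\rho\,h$ for some $\rho\in\R$, with $\rho>0$ by equality of signatures; hence $g_\phi$ is homothetic to $h$. When $\escal{\phi,\phi}=-7$ we have $t=\pm1$, i.e. $\phi=\pm\phi_0$, and since the construction of $g_\phi$ from a stable three-form is quadratic in $\phi$ it is insensitive to the overall sign and returns $g_{\phi_0}=h$ exactly, by the normalization built into \eqref{eq:canonicalform}.
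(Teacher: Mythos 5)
Your proof is correct and follows the same overall strategy as the paper's — Proposition \ref{prop:algebraic_characterization} to produce a non-isotropic spinor with square $c+\phi$, Proposition \ref{prop:equivariance} to transfer stabilizers, and Kath's theorem to identify $\Stab(\varepsilon)=\G^{\ast}_2$ — but you execute two steps differently, and in both cases more completely. In the backward direction the paper's proof records only the inclusion $\lambda(\G^{\ast}_2)\subseteq\Stab(\phi)$ (``$\phi$ is stabilized by $\lambda(\G^{\ast}_2)$''), whereas your fiber analysis — $\cE^\mu_l(\gamma_l(x)\varepsilon)=\cE^\mu_l(\varepsilon)$ together with $\cB(\varepsilon,\varepsilon)\neq0$ forces $\gamma_l(x)\varepsilon=\pm\varepsilon$, and $\{x\mid\gamma_l(x)\varepsilon=\pm\varepsilon\}=\{\pm1\}\cdot\Stab(\varepsilon)$ has image $\lambda(\Stab(\varepsilon))$ — supplies the reverse inclusion and hence the equality of subgroups that the theorem actually asserts; this makes explicit a step the printed argument glosses over. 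In the forward direction the paper lifts $\lambda(\G^{\ast}_2)$ to $\Spin_0(4,3)$, realizes the lift as $\Stab(\varepsilon)$ via Kath, and argues that $\cE^\mu_l(\varepsilon)^{(3)}$ and $\phi$ must be proportional; you instead read off $\phi\in\Lambda^3_1=\escal{\phi_0}$ directly from the recalled $\G^{\ast}_2$-decomposition of $\Lambda^3V^*$ and propagate \eqref{eq:intrinsicG2} from $\phi_0$ to $t\phi_0$ by homogeneity, absorbing $\sign(t)$ into $\kappa$ — equivalent in substance, slightly more economical since it avoids the lift, and resting on the same one-dimensionality of the trivial isotypic component that the paper's proportionality claim implicitly uses. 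Your Schur-lemma argument for the homothety clause is likewise a cleaner justification than the paper's one-line remark that $\phi$ is a multiple of the canonical form. One small inaccuracy: the metric associated to a stable three-form is not quadratic in $\phi$ — the unnormalized bilinear form $(\iota_u\phi)\wedge(\iota_v\phi)\wedge\phi$ is cubic, and the normalized metric scales as $g_{t\phi}=|t|^{2/3}g_\phi$ — but the property you actually invoke, evenness under $\phi\mapsto-\phi$ (so that $\phi=\pm\phi_0$ with $\escal{\phi,\phi}=-7$ gives $g_\phi=h$), is correct, so your conclusion stands; the $2/3$-homogeneity is moreover exactly what makes the homothety statement quantitatively true.
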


\begin{remark}
This result should be compared with \cite[Theorem 7.3]{KathHabilitation}, where an alternative algebraic characterization of $\G^{\ast}_2$ structures is obtained using different methods.
\end{remark}

\begin{proof}
Let $\phi\in \Lambda^3 V^{\ast}$ be a three-form satisfying $\escal{\phi,\phi}< 0$ as well as Equation \eqref{eq:intrinsicG2}. Then:
\begin{equation*}
\alpha = \kappa \sqrt{-\frac{\escal{\phi,\phi}}{7}}  + \phi
\end{equation*}

\noindent
is the positive square of an irreducible real spinor $\varepsilon \in \Sigma$, that is, $\cE^{+}_l(\varepsilon) = \alpha$. By \cite[Corollary 2.1]{Kat98}, $\phi$ is stabilized by $\G^{\ast}_2 \subset \Spin_0(4,3)$ and hence by Proposition \ref{prop:equivariance} it follows that $\phi$ is stabilized by $\lambda(\G^{\ast}_2)\subset \SO_0(4,3)$. Since $\phi$ is not necessarily of norm $-7$, it is in general a multiple of the canonical three-form $\eqref{eq:canonicalform}$ in the given basis, hence $\phi$ induces a metric that is homothetic to the metric $h$ induced by the canonical three-form \eqref{eq:canonicalform}. For the converse, assume that $\phi$ is stabilized by $\lambda(\G^{\ast}_2)\subset \SO_0(4,3)$. Then, we can lift $\lambda(\G^{\ast}_2)\subset \SO_0(4,3)$ via the double cover morphism $\lambda\colon \Spin_0(4,3) \to \SO_0(4,3)$ to a subgroup $\G^{\ast}_2 \subset \Spin_0(4,3)$. By \cite{Kat98} this subgroup can be realized as the stabilizer of an element $\varepsilon\in \Sigma$, which in turn defines a three-form as the degree three $\cE^{\mu}_l(\varepsilon)^{(3)}$ component of its square $\cE^{\mu}_l(\varepsilon) \in \Lambda^{<} V^{\ast}$. By equivariance of the spinor square map, it follows that $\cE^{\mu}_l(\varepsilon)$ and $\phi$ are preserved by the same subgroup $\lambda(\G^{\ast}_2)\subset \SO_0(4,3)$, and thus they must be proportional. Therefore, rescaling $\varepsilon$ if necessary and choosing adequately the sign $\mu \in \left\{-1,1\right\}$, we obtain that $\phi = \cE^{\mu}_l(\varepsilon)^{(3)}$ and therefore by Proposition \ref{prop:algebraic_characterization} we conclude.
\end{proof}

\noindent
As a corollary to the previous theorem and discussion, we obtain the following result for \emph{normalized} non-isotropic spinors.

\begin{cor}
An exterior form $\alpha\in \Lambda^{<} V^{\ast}$ is the square of a pseudo-unit spinor $\varepsilon\in\Sigma$ if and only if:
\begin{equation*}
\alpha = \frac{\kappa}{8} + \phi,
\end{equation*}

\noindent
where $\phi\in \Lambda^3 V^{\ast}$ satisfies:
\begin{equation*}
  6\,\kappa\,  \phi=8\,l*(\phi\triangle_1\phi).
\end{equation*}
\end{cor}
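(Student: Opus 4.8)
The plan is to obtain this corollary as the \emph{pseudo-unit} specialization of Proposition \ref{prop:algebraic_characterization}, the only extra input being the normalization of the degree-zero component. The key reduction is the relation $c=\alpha^{(0)}=\tfrac{1}{8}\cB(\varepsilon,\varepsilon)$, which is read off from the degree-zero term ($k=0$, empty product) in the explicit expression for the positive square of $\varepsilon$ displayed before Theorem \ref{thm:stabilizerG2*}. Consequently, $\varepsilon$ being pseudo-unit, i.e. $\cB(\varepsilon,\varepsilon)=\pm 1$, is equivalent to $c=\tfrac{\kappa}{8}$ with $\kappa=\cB(\varepsilon,\varepsilon)\in\{-1,+1\}$ (the sign $\mu$ of the square being absorbed into $\kappa$).

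For the direct implication I would assume $\alpha=\cE_l^{\mu}(\varepsilon)$ for a pseudo-unit $\varepsilon$ and apply the necessity part of Proposition \ref{prop:algebraic_characterization}, which gives $\alpha=c+\phi$ subject to \eqref{eq:algebraic_characterization}. Substituting $c=\tfrac{\kappa}{8}$, the first relation of \eqref{eq:algebraic_characterization} yields $\escal{\phi,\phi}=-7c^2=-\tfrac{7}{64}$, in agreement with $\cB(\varepsilon,\varepsilon)^2=-\tfrac{64}{7}\escal{\phi,\phi}$, while multiplying the second relation $6c\phi=l*(\phi\triangle_1\phi)$ by $8$ produces exactly $6\kappa\,\phi=8\,l*(\phi\triangle_1\phi)$. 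This direction is essentially mechanical.

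For the converse I would take $\alpha=\tfrac{\kappa}{8}+\phi$ with $6\kappa\,\phi=8\,l*(\phi\triangle_1\phi)$, set $c=\tfrac{\kappa}{8}\neq 0$, and recognize the hypothesis as precisely the degree-three component of $\alpha\vee\alpha=8c\,\alpha$, using the computation $\alpha\vee\alpha=c^2+2c\phi-\escal{\phi,\phi}+l*(\phi\triangle_1\phi)$ carried out before Proposition \ref{prop:algebraic_characterization}. By the sufficiency statement of that proposition (valid since $c\neq 0$, so that $\beta=1$ is admissible in Corollary \ref{cor:characterization_p-q=1}), it then remains only to secure the degree-zero component $\escal{\phi,\phi}=-7c^2=-\tfrac{7}{64}$. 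This is the main obstacle, because the degree-zero and degree-three parts of \eqref{eq:algebraic_characterization} are a priori logically independent: the degree-three equation alone is scale-covariant of weight two and does not, as a formal identity, pin the quadratic invariant $\escal{\phi,\phi}$ (for instance $\phi=0$ satisfies it vacuously).

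To overcome this I would argue as follows. First exclude $\phi=0$, since $\alpha=\tfrac{\kappa}{8}$ violates the norm condition and hence is not a square. For $\phi\neq 0$ the hypothesis reads $l*(\phi\triangle_1\phi)=\tfrac{3\kappa}{4}\phi$, so $*(\phi\triangle_1\phi)$ is a nonzero multiple of $\phi$; once one knows $\escal{\phi,\phi}<0$, comparing this coefficient with the coefficient $6\kappa\sqrt{-\escal{\phi,\phi}/7}$ in \eqref{eq:intrinsicG2} forces $\sqrt{-\escal{\phi,\phi}/7}=\tfrac{1}{8}$, i.e. $\escal{\phi,\phi}=-\tfrac{7}{64}$, whereupon Theorem \ref{thm:stabilizerG2*} identifies $\phi$ as a $\mathrm{G}_2^*$-form of the correct normalization and $\alpha=\tfrac{\kappa}{8}+\phi$ as the square of a spinor with $\cB(\varepsilon,\varepsilon)^2=-\tfrac{64}{7}\escal{\phi,\phi}=1$. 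The genuinely delicate step I expect to require care is establishing $\escal{\phi,\phi}<0$ for a nonzero solution, which I would derive by pairing $6c\phi=l*(\phi\triangle_1\phi)$ with $\phi$, reducing to the sign of the cubic invariant $(\phi\triangle_1\phi)\wedge\phi$, and using that the proportionality $*(\phi\triangle_1\phi)\propto\phi$ confines $\phi$ to the open stable $\mathrm{G}_2^*$-orbit where this sign is controlled.
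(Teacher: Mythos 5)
Your direct implication is fine and coincides with the paper's own (implicit) argument: the corollary carries no separate proof in the paper and is intended as the pseudo-unit specialization of Proposition \ref{prop:algebraic_characterization}, using $c=\alpha^{(0)}=\tfrac{1}{8}\cB(\varepsilon,\varepsilon)$ and $\cB(\varepsilon,\varepsilon)^2=-\tfrac{64}{7}\escal{\phi,\phi}$ from the discussion preceding Theorem \ref{thm:stabilizerG2*}; in that reading the normalization $\escal{\phi,\phi}=-7c^2=-\tfrac{7}{64}$ coming from the first equation of \eqref{eq:algebraic_characterization} is understood to accompany the displayed degree-three equation. Your instinct that the degree-three equation alone does not pin $\escal{\phi,\phi}$ is correct and is a sharper observation than the paper makes explicit.

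However, your converse cannot be completed along the lines you propose, and the gap is genuine. First, ``excluding $\phi=0$'' is not an available move: in the converse direction $\phi=0$ satisfies the stated hypothesis, so it is a counterexample to the literal statement, not a degenerate case one may set aside. Second, the coefficient comparison with \eqref{eq:intrinsicG2} is circular: Theorem \ref{thm:stabilizerG2*} licenses that comparison only once you already know that $\phi$ satisfies \eqref{eq:intrinsicG2}, i.e.\ is a $\mathrm{G}_2^*$-form, whereas your hypothesis only gives proportionality with the fixed constant $\tfrac{3\kappa}{4}$. Third, and decisively, the claim that the proportionality confines $\phi$ to the correctly normalized orbit is false. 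Under $\Psi_l^<$ the $(\pi\circ\tau)$-fixed subspace $\Lambda^0V^*\oplus\Lambda^3V^*$ (dimension $36$) maps isomorphically onto the space of $\cB$-symmetric endomorphisms of $\Sigma$ (dimension $36$). Choose a $\cB$-orthogonal splitting $\Sigma=\Sigma_2\oplus\Sigma_6$ with nondegenerate restrictions and let $T$ act as $\tfrac{9\kappa}{8}\Id$ on $\Sigma_2$ and $-\tfrac{3\kappa}{8}\Id$ on $\Sigma_6$. Then $T$ is $\cB$-symmetric with $\tr T=0$, so $T=\Psi_l^<(\phi')$ for a genuine three-form $\phi'$; from $T^2=\tfrac{3\kappa}{4}T+\tfrac{27}{64}\Id$ and $\phi'\vee\phi'=-\escal{\phi',\phi'}+l*(\phi'\triangle_1\phi')$, matching degrees gives $6\kappa\,\phi'=8\,l*(\phi'\triangle_1\phi')$ and $\escal{\phi',\phi'}=-\tfrac{27}{64}<0$. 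Thus $\phi'$ satisfies your hypothesis, and even your intermediate goal $\escal{\phi,\phi}<0$, yet $\Psi_l^<(\tfrac{\kappa}{8}+\phi')$ has eigenvalues $\tfrac{5\kappa}{4}$ and $-\tfrac{\kappa}{4}$, hence rank $8$ rather than rank one, so $\tfrac{\kappa}{8}+\phi'$ is not the square of any spinor (an analogous $(3,5)$-splitting gives $\escal{\phi,\phi}=-\tfrac{135}{64}$). The correct conclusion is that the corollary must be read with the additional condition $\escal{\phi,\phi}=-\tfrac{7}{64}$, i.e.\ the degree-zero equation of \eqref{eq:algebraic_characterization} at $c=\tfrac{\kappa}{8}$, exactly as supplied by the sufficiency clause of Proposition \ref{prop:algebraic_characterization} via Corollary \ref{cor:characterization_p-q=1} with $\beta=1$; once that normalization is included, the eigenvalue bookkeeping above forces multiplicities $(1,7)$ and the rank-one property, and the converse is immediate.
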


\noindent
We end this section with a technical lemma necessary for later computations.

\begin{lemma}
\label{lemma:aux-triangle1}
Let $c + \phi\in\Lambda^{<}V^*$ satisfying Equation \eqref{eq:algebraic_characterization} with $\escal{\phi,\phi} \neq 0$. Then: 
\begin{enumerate}
\itemsep 0em
\item $(\iota_v\phi)\triangle_1\phi=3lc*(\phi\wedge v^\flat)\in\Lambda^3_7$ for $v\in V$.
\item $\omega\triangle_1\phi=0$ for $\omega\in\Lambda^2_{14}\cong\mathfrak{g}_2^*$.
\item $*\big((*(\theta\wedge\phi))\triangle_1\phi\big)=3lc*(\theta\wedge\phi)\in\Lambda^3_7$ for $\theta\in V^*$.
\item $*(\rho\triangle_1\phi)=-3lc\rho$ for $\rho\in\Lambda^3_{27}$.
\end{enumerate}
\end{lemma}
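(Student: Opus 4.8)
The plan is to exploit the $\mathrm{G}_2^\ast$-equivariance of the map $X\mapsto X\triangle_1\phi$. Since $c+\phi$ satisfies \eqref{eq:algebraic_characterization} with $\escal{\phi,\phi}\neq 0$, the first relation forces $\escal{\phi,\phi}=-7c^2<0$ and $c\neq 0$, so by Theorem \ref{thm:stabilizerG2*} the three-form $\phi$ is stabilized by $\mathrm{G}_2^\ast\subset\SO_0(4,3)$, which in particular preserves the metric $h$. As the generalized product $\triangle_1$ and the Hodge star $*$ are built from $h$ alone, both $X\mapsto X\triangle_1\phi$ and $X\mapsto *(X\triangle_1\phi)$ are $\mathrm{G}_2^\ast$-equivariant, and the decompositions of $\Lambda^2V^*$ and $\Lambda^3V^*$ recorded in Section \ref{subsec:Kath} are precisely their splittings into absolutely irreducible $\mathrm{G}_2^\ast$-modules. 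With this in place, item (2) is immediate from Schur's lemma: the equivariant map $\Lambda^2_{14}\to\Lambda^3V^*$, $\omega\mapsto\omega\triangle_1\phi$, has source isomorphic to $\mathfrak{g}_2^\ast\cong\mathbf{14}$, and this module does not occur in $\Lambda^3V^*=\Lambda^3_1\oplus\Lambda^3_7\oplus\Lambda^3_{27}\cong\mathbf{1}\oplus\mathbf{7}\oplus\mathbf{27}$, so the map vanishes identically.

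For item (1) I would argue by an explicit contraction identity rather than by Schur. Expanding $\triangle_1$ in a basis and using $\iota_v\iota_{e_i}=-\iota_{e_i}\iota_v$ together with the symmetry of $h^{ij}$ yields the Leibniz-type relation
\begin{equation*}
\iota_v(\phi\triangle_1\phi)=-2\,(\iota_v\phi)\triangle_1\phi .
\end{equation*}
Applying $*$ to the second equation of \eqref{eq:algebraic_characterization} (which reads $*(\phi\triangle_1\phi)=6lc\,\phi$) and using $*^2=-\id$ on $\Lambda^4V^*$ gives $\phi\triangle_1\phi=-6lc*\phi$. Substituting this into the relation above and invoking the standard identity $\iota_v(*\phi)=*(\phi\wedge v^\flat)$ then produces $(\iota_v\phi)\triangle_1\phi=3lc*(\phi\wedge v^\flat)$, which lies in $\Lambda^3_7$ by the very definition of that summand.

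Items (3) and (4) I would treat together through the equivariant endomorphism $T\colon\Lambda^3V^*\to\Lambda^3V^*$, $T(X):=*(X\triangle_1\phi)$. Since $\Lambda^3_1,\Lambda^3_7,\Lambda^3_{27}$ are pairwise non-isomorphic and each occurs with multiplicity one, Schur's lemma forces $T$ to preserve each summand and to act there by a scalar $a_\bullet$; the membership assertions in (3) (and in (1)) are thereby automatic, and the two statements reduce to $a_7=3lc$ on $\Lambda^3_7$ and $a_{27}=-3lc$ on $\Lambda^3_{27}$. The scalar on $\Lambda^3_1$ is read off directly from \eqref{eq:algebraic_characterization}, namely $T(\phi)=*(\phi\triangle_1\phi)=6lc\,\phi$, so $a_1=6lc$.

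The main obstacle is to pin down $a_7$ and $a_{27}$, because the contraction identities that settled item (1) become circular here: the natural recursion passes through the $\mathrm{G}_2^\ast$-invariant element $(*\phi)\triangle_1\phi\in\Lambda^5V^*$, which must vanish since $\Lambda^5V^*\cong\mathbf{7}\oplus\mathbf{14}$ has no trivial summand, and the relations one then obtains merely reproduce each other. I would therefore extract the two scalars by a genuine computation, in one of two ways. Either evaluate $T$ on a single explicit representative of $\Lambda^3_7$ and one of $\Lambda^3_{27}$ in the standard basis \eqref{eq:canonicalform}, both sides transforming identically under the rescaling $\phi\mapsto t\phi$, $c\mapsto tc$, so that equivariance and homogeneity extend the result from the normalized form to every $\phi$ obeying \eqref{eq:algebraic_characterization}; or compute the two invariants $\tr T=a_1+7a_7+27a_{27}$ and $\tr T^2=a_1^2+7a_7^2+27a_{27}^2$ and solve for the pair $(a_7,a_{27})$ using the known value $a_1=6lc$. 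Either route is routine but, in view of the circularity noted above, unavoidable.
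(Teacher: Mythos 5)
Your items (1) and (2) are fine. For (1) you follow essentially the paper's own route: the contraction identity $(\iota_v\phi)\triangle_1\phi=-\tfrac{1}{2}\iota_v(\phi\triangle_1\phi)$ combined with $\phi\triangle_1\phi=-6lc*\phi$ (obtained from \eqref{eq:algebraic_characterization} and $*^2=-\mathrm{id}$ on $\Lambda^4V^*$) and $\iota_v(*\phi)=*(\phi\wedge v^\flat)$. For (2) your Schur-type argument --- the module $\mathbf{14}$ does not occur in $\Lambda^3V^*\cong\mathbf{1}\oplus\mathbf{7}\oplus\mathbf{27}$, so the equivariant map $\omega\mapsto\omega\triangle_1\phi$ vanishes --- is a legitimate and arguably slicker alternative to the paper's proof, which instead verifies by a basis computation that $\omega\triangle_1\phi$ equals the natural Lie-algebra action $\omega\cdot\phi$ and then uses that $\mathfrak{g}_2^*$ stabilizes $\phi$; note only that Hom-vanishing between non-isomorphic irreducibles suffices here, whereas your scalar reduction for (3)--(4) additionally needs absolute irreducibility of the real summands (which does hold for the split form, but should be said). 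Your reduction of (3)--(4) to two scalars $a_7,a_{27}$ for the equivariant operator $T(X)=*(X\triangle_1\phi)$, with $a_1=6lc$ read off from \eqref{eq:algebraic_characterization}, is likewise sound, and the homogeneity argument under $(c,\phi)\mapsto(tc,t\phi)$ is correct.

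The genuine gap is that the decisive step --- actually determining $a_7=3lc$ and $a_{27}=-3lc$, which \emph{is} the content of items (3) and (4) --- is never carried out. Of your two fallbacks, the trace method is underdetermined as stated: $\tr T$ and $\tr T^2$ give one linear and one quadratic relation in $(a_7,a_{27})$, a line meeting an ellipse, generically in two points, so these two invariants cannot single out the claimed values without $\tr T^3$ or a further argument (and computing the traces is itself no easier than a direct evaluation). Moreover, your claim that the contraction identities become circular is false: the paper closes them without circularity. For (3), expanding $(*(\theta\wedge\phi))\triangle_1\phi$ in a basis, the problematic cross term is proportional to $\theta\wedge(\phi\triangle_2\phi)$, which vanishes identically since $\alpha\triangle_k\alpha=0$ whenever $a-k$ is odd; what survives gives
\begin{equation*}
*\big((*(\theta\wedge\phi))\triangle_1\phi\big)=\tfrac{1}{2}\iota_{\theta^\sharp}(\phi\triangle_1\phi)=-3lc\,\iota_{\theta^\sharp}(*\phi)=3lc*(\theta\wedge\phi),
\end{equation*}
i.e.\ $a_7=3lc$. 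For (4) the paper invokes Karigiannis's parametrization $\Lambda^3_{27}=\{A_{ij}h^{jk}e^i\wedge\iota_{e_k}\phi\mid A \text{ symmetric},\ \Tr_h(A)=0\}$; the term $A_i^k\,\iota_{e_k}\phi\wedge\iota_{e_i}\phi$ is proportional to $\Tr_h(A)*\phi$ and so drops out, and the same identity $\iota_{e_k}(\phi\triangle_1\phi)=-6lc\,\iota_{e_k}(*\phi)$ then yields $a_{27}=-3lc$. Your first fallback --- evaluating $T$ on explicit representatives for the standard $\phi$ of \eqref{eq:canonicalform} and extending by equivariance and homogeneity --- would indeed work, but as written it is a plan, not a proof, and so items (3) and (4) remain unproved in your proposal.
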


\begin{proof}
    (1) We have $6c\phi=l*(\phi\triangle_1\phi)$, which is equivalent to $\phi\triangle_1\phi=-6lc*\phi$. Hence $$(\iota_v\phi)\triangle_1\phi=\phi(v,e_i)\wedge\phi(e_i)=-\tfrac{1}{2}\iota_v(\phi(e_i)\wedge\phi(e_i))=-\tfrac{1}{2}\iota_v(\phi\triangle_1\phi)=-\tfrac{1}{2}\iota_v(-6lc*\phi)=3lc*(\phi\wedge v^\flat).$$

    \noindent
    (2) Recall that $\Omega^*\phi=\phi$ for $\Omega\in\mathrm{G}_2^*$ by definition. Thus $\omega\cdot\phi=0$ for $\omega\in\mathfrak{g}_2^*$, where $\omega\cdot\phi\in\Lambda^3V^*$ is defined by $$(\omega\cdot\phi)(u,v,w)=\phi(\omega u,v,w)+\phi(u,\omega v,w)+\phi(u,v,\omega w),$$ which is expressed in terms of a basis by $$\omega\cdot\phi=\tfrac{1}{3!}(\omega\cdot\phi)_{ijk}e^{ijk}=\tfrac{1}{3!}(\omega^l_i\phi_{ljk}+\omega^l_j\phi_{ilk}+\omega^l_k\phi_{ijl})e^{ijk}=\tfrac{1}{2}\omega^l_i\phi_{ljk}e^{ijk}.$$

    \noindent
    Therefore $$\omega\triangle_1\phi=h^{ml}\iota_{e_m}\omega\wedge\iota_{e_l}\phi=\tfrac{1}{2}h^{ml}\omega_{mi}\phi_{ljk}e^{ijk}=\tfrac{1}{2}\omega^l_i\phi_{ljk}e^{ijk}=\omega\cdot\phi=0.$$
    
    \noindent
    (3) We compute \begin{align*}
        *\big((*(\theta\wedge\phi))\triangle_1\phi\big)&=*\big(\iota_{e_i}(*(\theta\wedge\phi))\wedge\phi(e_i)\big)=*\big(*(\theta\wedge\phi\wedge e^i)\wedge e^j\wedge e^k\big)\phi_{ijk}\\
        &=\iota_{e_j}\iota_{e_k}*^2(\theta\wedge\phi\wedge e^i)\phi_{ijk}=-\iota_{e_j}(\theta(e_k)\phi\wedge e^i-\theta\wedge\phi(e_k)\wedge e^i)\phi_{ijk}\\
        &=-(\theta(e_k)\phi(e_j)\wedge e^i-\theta(e_j)\phi(e_k)\wedge e^i+\theta\wedge\phi(e_k,e_j)\wedge e^i)\phi_{ijk}\\
        &=-2\phi_{ijk}\theta(e_k)\phi(e_j)\wedge e^i-\phi_{ijk}\theta\wedge\phi(e_k,e_j)\wedge e^i.
    \end{align*}

    \noindent
    Note that $\phi_{ijk}\theta\wedge\phi(e_k,e_j)\wedge e^i=\theta\wedge(\phi_{ijk}\phi_{kjl}e^{li})=\theta\wedge(\phi\triangle_2\phi)=0$. We also have that \begin{align*}
        \iota_{\theta^\sharp}(\phi\triangle_1\phi)&=\iota_{\theta^\sharp}(\phi(e_i)\wedge\phi(e_i))=2\phi(e_i,\theta^\sharp)\wedge\phi(e_i)=-2(\iota_{\theta^\sharp}\phi(e_i))\wedge\phi(e_i)\\
        &=-\phi_{iab}\phi_{icd}\iota_{\theta^\sharp}(e^{ab})\wedge e^{cd}=-2\phi_{iab}\phi_{icd}\theta(e_a)e^{bcd}=-4\phi_{iab}\theta(e_a)\phi(e_i)\wedge e^b.
    \end{align*}

    \noindent Hence $$*\big((*(\theta\wedge\phi))\triangle_1\phi\big)=\tfrac{1}{2}\iota_{\theta^\sharp}(\phi\triangle_1\phi)=\tfrac{1}{2}\iota_{\theta^\sharp}(-6lc*\phi)=-3lc*(\phi\wedge\theta).$$
    
    \noindent
    (4) Here we use the description of $\Lambda^3_{27}$ given in \cite[Equation 16]{Kar09}, that is $$\Lambda^3_{27}=\{A_{ij}h^{jk}e^i\wedge\phi(e_k)\mid A_{ij}=A_{ji},\,\Tr_h(A)=h^{ij}A_{ij}=0\}.$$

    \noindent
    Although this description is originally given for $\mathrm{G}_2$-structures it also holds in the $\mathrm{G}_2^*$ case. Let $\rho\in\Lambda^3_{27}$ given by $\rho=A_i^ke^i\wedge\phi(e_k)$, then \begin{align*}
        \rho\triangle_1\phi&=\iota_{e_a}(A_i^ke^i\wedge\phi(e_k))\wedge\phi(e_a)=A_i^k(h^i_a\phi(e_k)-e^i\wedge\phi(e_k,e_a))\wedge\phi(e_a)\\
        &=A_i^k\phi(e_k)\wedge\phi(e_i)-A_i^ke^i\wedge\phi(e_k,e_a)\wedge\phi(e_a).
    \end{align*}

    \noindent
    For the first term note that the metric induced by the three-form $\phi$ is a multiple of the original metric $h$ and then the quantity $\phi(e_k)\wedge\phi(e_i)$ is just a multiple of $h_k^i*\phi$. Hence the first term is a multiple of $\Tr_h(A)*\phi$, thus is zero. For the second term we have $$A_i^ke^i\wedge\phi(e_k,e_a)\wedge\phi(e_a)=-\tfrac{1}{2}A_i^ke^i\wedge\iota_{e_k}(\phi\triangle_1\phi)=3lcA_i^ke^i\wedge\iota_{e_k}(*\phi)=3lcA_i^ke^i\wedge*(\phi\wedge e^k).$$

    \noindent
    Applying the Hodge star operator to this expression we obtain \begin{align*}
        3lcA_i^k*(e^i\wedge*(\phi\wedge e^k))&=-3lcA_i^k\iota_{e_i}*^2(\phi\wedge e^k)=3lcA_i^k\phi(e_i)\wedge e^k-3lcA_i^kh_i^k\phi=3lc\rho,
    \end{align*} since $A_i^kh_i^k=\Tr_h(A)=0$.
\end{proof}


\subsection{The isotropic algebraic characterization}
\label{subsec:algebraic_charisotropic}

 
Proposition \ref{prop:algebraic_characterization} does not give sufficient conditions for an exterior form to be the square of an isotropic spinor, it only provides a necessary condition. Therefore the isotropic case, namely the case  $\cB(\varepsilon,\varepsilon) = 0$, needs to be considered separately. By Corollary \ref{cor:characterization_p-q=1}, we need to find an element $\beta\in \Lambda^{<} V^{\ast}$ such that $(\alpha\vee\beta)^{(0)}\neq 0$. In order to do this, we first note that it is enough to take $\beta\in\Lambda^3V^*$.

\begin{lemma}\label{lemma:(alpha-beta)0}
    Let $\beta\in\Lambda^<V^*$ and $\phi\in\Lambda^3V^*$. Then $(\phi\vee\beta)^{(0)}=-\escal{\phi,\beta^{(3)}}$.
\end{lemma}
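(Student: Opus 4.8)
The plan is to read off the degree-zero component directly from the definition of the $\vee$-product. Recall that $\phi\vee\beta=2\cP_<(\cP_l(\phi\diamond\beta))$, and that the projection $\cP_<$ acts as the identity in degree zero. Hence $(\phi\vee\beta)^{(0)}=2\,(\cP_l(\phi\diamond\beta))^{(0)}$, and by the explicit formula \eqref{eq:projection_l} this equals $(\phi\diamond\beta)^{(0)}+l\,(*\tau(\phi\diamond\beta))^{(0)}$. The task thus splits into extracting the scalar part of $\phi\diamond\beta$ itself and showing that its Hodge-dual companion contributes nothing in degree zero.

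First I would dispose of the dual term. Since $*$ maps $\Lambda^7V^*$ onto $\Lambda^0V^*$ and $\tau$ preserves degree, the scalar part $(*\tau(\phi\diamond\beta))^{(0)}$ is governed entirely by the top-degree component $(\phi\diamond\beta)^{(7)}$. Expanding the geometric product via the generalized products, a summand $\phi\triangle_k\beta^{(b)}$ lands in degree $3+b-2k$, so reaching degree $7$ would force $b=4+2k\geq 4$; as $\beta\in\Lambda^<V^*$ carries no components of degree exceeding three, this top component vanishes identically. Therefore $(\phi\vee\beta)^{(0)}=(\phi\diamond\beta)^{(0)}$.

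It then remains to isolate the scalar part of $\phi\diamond\beta$. Using the expansion $\phi\diamond\beta=\sum_k(-1)^{\binom{k+1}{2}+3k}\phi\triangle_k\beta$ and the degree constraint $3+b-2k=0$, together with the vanishing rule $\phi\triangle_k\beta=0$ whenever $k>3$ or $k>b$ recorded among the properties of the generalized products above, the only surviving contribution comes from $k=3$ acting on $\beta^{(3)}$; the competing candidate $k=2,\,b=1$ is killed by $k>b$. The associated sign is $(-1)^{\binom{4}{2}+9}=(-1)^{15}=-1$, while $\phi\triangle_3\beta^{(3)}=\escal{\phi,\beta^{(3)}}$ by the identity $\alpha\triangle_a\beta=\escal{\alpha,\beta}$ for $b=a$. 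This yields $(\phi\diamond\beta)^{(0)}=-\escal{\phi,\beta^{(3)}}$ and hence the claim. The only point demanding care is this last bookkeeping step: one must verify that no lower-weight generalized product smuggles in an extra scalar term — which is precisely the content of the vanishing rule for $k>b$ — and one must track the sign $(-1)^{\binom{k+1}{2}+ak}$ correctly; the rest is forced purely by degree counting.
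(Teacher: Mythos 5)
Your proof is correct and follows essentially the same route as the paper: both expand $\phi\diamond\beta$ via the generalized products with the sign $(-1)^{\binom{k+1}{2}+3k}$, apply the projections $\cP_l$ and $\cP_<$, and identify $-\escal{\phi,\beta^{(3)}}$ as the unique scalar contribution via the rule $\phi\triangle_k\beta^{(b)}=0$ for $k>b$. The only difference is organizational: where the paper writes out $\phi\vee\beta^{(j)}$ in full for each $j=0,\dots,3$, you shortcut this with two degree counts (the vanishing of $(\phi\diamond\beta)^{(7)}$ killing the $*\tau$ term in degree zero, and $3+b-2k=0$ forcing $k=b=3$), which is a valid and slightly more economical bookkeeping of the same computation.
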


\begin{proof}
    Let $\beta=\beta^{(0)}+\beta^{(1)}+\beta^{(2)}+\beta^{(3)}$. We first have $\phi\diamond\beta=\phi\beta^{(0)}+\phi\diamond\beta^{(1)}+\phi\diamond\beta^{(2)}+\phi\diamond\beta^{(3)}$ with \begin{align*}
        \phi\diamond\beta^{(1)}&=\phi\wedge\beta^{(1)}+\phi\triangle_1\beta^{(1)},\\
        \phi\diamond\beta^{(2)}&=\phi\wedge\beta^{(2)}+\phi\triangle_1\beta^{(2)}-\phi\triangle_2\beta^{(2)},\\
        \phi\diamond\beta^{(3)}&=\phi\wedge\beta^{(3)}+\phi\triangle_1\beta^{(3)}-\phi\triangle_2\beta^{(3)}-\escal{\phi,\beta^{(3)}}.
    \end{align*} Recall that $\cP_l(\alpha)=\frac{1}{2}(\alpha+l*\tau(\alpha))$ for any exterior form $\alpha$ and $\cP_<:\Lambda V^*\to\Lambda^<V^*$ is the projection. Hence \begin{align*}
        \phi\vee\beta^{(1)}&=2\cP_<(\cP_l(\phi\diamond\beta^{(1)}))=\phi\triangle_1\beta^{(1)}+l*\tau(\phi\wedge\beta^{(1)}),\\
        \phi\vee\beta^{(2)}&=2\cP_<(\cP_l(\phi\diamond\beta^{(2)}))=\phi\triangle_1\beta^{(2)}-\phi\triangle_2\beta^{(2)}+l*\tau(\phi\wedge\beta^{(2)}),\\
        \phi\vee\beta^{(3)}&=2\cP_<(\cP_l(\phi\diamond\beta^{(3)}))=-\phi\triangle_2\beta^{(3)}-\escal{\phi,\beta^{(3)}}+l*\tau(\phi\wedge\beta^{(3)})+l*\tau(\phi\triangle_1\beta^{(3)}).
    \end{align*} Therefore $(\phi\vee\beta)^{(0)}=-\escal{\phi,\beta^{(3)}}$.
\end{proof}

\noindent
To proceed further we need to introduce a basis adapted to the geometry of the problem. Since $h$ is of signature $(4,3)$ we can always choose a unit-norm element $n\in V^{\ast}$. This element induces a decomposition:
\begin{equation*}
V^{\ast} = V^{\ast}_n \oplus \langle  n\rangle,
\end{equation*}

\noindent
where $V^{\ast}_n\subset V^{\ast}$ is the orthogonal complement of $n$ in $V^{\ast}$. Hence, with the metric induced by $h$, it follows that $V^{\ast}_n$ becomes a six-dimensional vector space of split signature $(3,3)$. We choose a basis $(\vartheta_+,\theta_+,\eta_+,\vartheta_-,\theta_-,\eta_-)$ on $V^{\ast}_n$ given by isotropic one-forms that are conjugate by pairs, that is, they are orthogonal except for:
\begin{equation*}
\escal{\vartheta_+,\vartheta_-}=\escal{\theta_+,\theta_-}=\escal{\eta_+,\eta_-}=1.
\end{equation*}

\noindent
We refer to such $(\vartheta_+,\theta_+,\eta_+,\vartheta_-,\theta_-,\eta_-,n)$ as an \emph{isotropic} basis for $(V^{\ast},h^{\ast})$.

\begin{thm}\label{thm:null_spinor}
An exterior form $\alpha\in\Lambda^<V^*$ is the square of an isotropic spinor if and only if $\alpha = \vartheta \wedge \theta \wedge \eta \in\Lambda^3V^*$ for $\vartheta,\theta,\eta\in V^*$ isotropic and orthogonal.
\end{thm}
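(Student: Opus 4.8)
The plan is to prove both implications through the algebraic criterion of Corollary~\ref{cor:characterization_p-q=1}, working throughout in an isotropic basis $(\vartheta_+,\theta_+,\eta_+,\vartheta_-,\theta_-,\eta_-,n)$ for $(V^*,h^*)$. For the forward implication, suppose $\alpha=\cE_l^\mu(\varepsilon)$ with $\varepsilon$ isotropic. The relation $\Scr B(\varepsilon,\varepsilon)^2=-\tfrac{64}{7}\escal{\phi,\phi}$ established above forces $\escal{\phi,\phi}=0$, so by Proposition~\ref{prop:algebraic_characterization} the scalar part $c=\alpha^{(0)}$ vanishes and $\alpha=\phi\in\Lambda^3V^*$ satisfies $\escal{\phi,\phi}=0$ and $\phi\triangle_1\phi=0$. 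These necessary conditions do not yet pin down $\phi$, so the substance of this direction will be extracted from equivariance once a single explicit square has been identified in the backward direction.

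For the backward implication, let $\alpha=\vartheta\wedge\theta\wedge\eta$ with $\vartheta,\theta,\eta$ isotropic and mutually orthogonal; completing them to an isotropic basis I may assume $(\vartheta,\theta,\eta)=(\vartheta_+,\theta_+,\eta_+)$. The linear constraint $(\pi\circ\tau)(\alpha)=\alpha$ holds automatically, since it amounts to $\alpha^{(1)}=\alpha^{(2)}=0$ and $\alpha$ is homogeneous of degree three. Because the Gram matrix of $(\vartheta_+,\theta_+,\eta_+)$ vanishes we get $\escal{\alpha,\alpha}=0$, and a short index check---every nonzero entry of $h^{ij}$ pairs a $+$ one-form with a $-$ one-form (or $n$ with $n$), while $\alpha$ involves only $+$ one-forms, so each surviving term of $h^{ij}\,\iota_{e_i}\alpha\wedge\iota_{e_j}\alpha$ contains a contraction that annihilates $\alpha$---gives $\phi\triangle_1\phi=0$. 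Feeding these into $\alpha\vee\alpha=-\escal{\phi,\phi}+l*(\phi\triangle_1\phi)$ (the $c=0$ specialization of the earlier formula) yields $\alpha\vee\alpha=0=8\alpha^{(0)}\alpha$, which is the first relation of Corollary~\ref{cor:characterization_p-q=1}(2).

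It then remains to produce $\beta$ with $(\alpha\vee\beta)^{(0)}\neq0$ and to verify the cubic relation. I would take $\beta=\vartheta_-\wedge\theta_-\wedge\eta_-$; by Lemma~\ref{lemma:(alpha-beta)0} we have $(\alpha\vee\beta)^{(0)}=-\escal{\alpha,\beta}$, and since the cross Gram matrix pairing $(\vartheta_+,\theta_+,\eta_+)$ against $(\vartheta_-,\theta_-,\eta_-)$ is the identity, $(\alpha\vee\beta)^{(0)}=-1\neq0$. By Corollary~\ref{cor:characterization_p-q=1}(2) the backward direction is complete once one checks the single identity $\alpha\vee\beta\vee\alpha=8(\alpha\vee\beta)^{(0)}\alpha=-8\alpha$. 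In particular this exhibits $\alpha=\vartheta_+\wedge\theta_+\wedge\eta_+$ as the square $\cE^\mu_l(\varepsilon_0)$ of an explicit isotropic spinor $\varepsilon_0$. With such a representative in hand, the forward implication closes by equivariance: since $\Spin_0(4,3)$ acts transitively on nonzero isotropic spinors up to scaling---as follows from Kath's orbit analysis \cite{Kat98}, the orbit--stabilizer count $21-14=7$ matching the dimension of the isotropic cone of the neutral pairing $\Scr B$, together with its connectedness---every isotropic $\varepsilon$ equals $t\,\gamma_l(x)\varepsilon_0$, whence $\cE^\mu_l(\varepsilon)=t^2\lambda(x)\,(\vartheta_+\wedge\theta_+\wedge\eta_+)$; as $\lambda(x)\in\SO_0(4,3)$ preserves $h$, this is again a decomposable form with isotropic, mutually orthogonal factors.

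The main obstacle is precisely the cubic identity $\alpha\vee\beta\vee\alpha=-8\alpha$, which replaces the trivial choice $\beta=1$ that is available only in the non-isotropic case and which carries the full content of the theorem. I would compute it directly from the geometric product, expanding $\alpha\diamond\beta$ through the generalized products $\triangle_k$ for $k=0,\dots,3$, applying $\cP_l$ and $\cP_<$ to land back in $\Lambda^<V^*$, and then multiplying by $\alpha$ once more. The key structural simplification is that $\vartheta_+\wedge\theta_+\wedge\eta_+$ and $\vartheta_-\wedge\theta_-\wedge\eta_-$ are mutually dual decomposable forms whose mixed contractions reproduce the $\mathfrak{sl}(3,\R)$-action on $\langle\vartheta_+,\theta_+,\eta_+\rangle$: the off-diagonal isotropy of the basis kills the vast majority of terms, and the survivors should reassemble into a single multiple of $\alpha$ with coefficient $-8$, matching $8(\alpha\vee\beta)^{(0)}$. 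This computation, while routine in principle, is the delicate step, and it simultaneously supplies the representative $\varepsilon_0$ needed to finish the forward implication.
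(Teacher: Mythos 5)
Your proposal is correct, and on the backward implication it is essentially the paper's own argument: the same choice $\beta=\vartheta_-\wedge\theta_-\wedge\eta_-$ after completing to an isotropic basis, the same appeal to Lemma \ref{lemma:(alpha-beta)0} (the paper records $\escal{\alpha,\beta}=1$, you record $(\alpha\vee\beta)^{(0)}=-1$; these agree), and the cubic identity $\alpha\vee\beta\vee\alpha=-8\alpha$ left as an unexecuted computation --- the paper likewise only asserts it as ``tedious but straightforward,'' so you are at parity there, and your preliminary checks ($(\pi\circ\tau)(\alpha)=\alpha$, $\escal{\phi,\phi}=0$, $\phi\triangle_1\phi=0$ via the isotropic-basis index argument) are sound. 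Where you genuinely diverge is the forward implication. The paper proves it by contradiction with a second long computation: writing $\alpha=\phi_0+\phi^{\perp}$ with $\phi_0=b\,\vartheta_+\wedge\theta_+\wedge\eta_+$ and $\phi^{\perp}(\vartheta_+^{\sharp},\theta_+^{\sharp},\eta_+^{\sharp})=0$, it solves the cubic relation directly to force $\phi^{\perp}=0$ and $b^2=1$. You instead combine the equivariance of the square map (Proposition \ref{prop:equivariance}) with transitivity of $\Spin_0(4,3)$ on nonzero isotropic spinors, deduced by an orbit--stabilizer count from Kath's result: since \cite{Kat98} gives the $14$-dimensional stabilizer for \emph{every} nonzero isotropic spinor (as the paper itself cites it), every orbit in the punctured null cone of $\Scr B$ is $7$-dimensional, hence open, and connectedness forces a single orbit; then $\cE^\mu_l(\varepsilon)=\pm t^2\lambda(x)(\vartheta_+\wedge\theta_+\wedge\eta_+)$ is again decomposable with isotropic orthogonal factors, since scalars and signs absorb into one factor. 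This buys a conceptual replacement for the paper's second ``long computation,'' at the cost of two inputs you should make explicit: the stabilizer statement must indeed hold for all isotropic spinors rather than one representative (it does, as cited), and connectedness of the punctured cone requires knowing that $\Scr B$ has split signature $(4,4)$ on $\Sigma$ --- true and standard for $\Spin_0(4,3)$, but nowhere stated in the paper, so it merits a line in your write-up.
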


\begin{proof}
Suppose that $\alpha=\vartheta\wedge\theta\wedge\eta$ for isotropic and orthogonal one-forms $\vartheta,\theta,\eta\in V^*$. Set $\vartheta_+ := \vartheta$, $\theta_+ := \theta$ and $\eta_+:= \eta$ and complete these elements into an isotropic basis $(\vartheta_+,\theta_+,\eta_+,\vartheta_-,\theta_-,\eta_-,n)$. Take $\beta =\vartheta_-\wedge\theta_-\wedge\eta_-$. We have that $\escal{\alpha,\beta}=1$, hence $(\alpha \vee \beta)^{(0)} \neq 0$, and a tedious but straightforward computation shows that $\alpha\vee\beta\vee\alpha=-8\alpha$. Thus, we conclude that $\alpha$ is the square of an isotropic spinor by Corollary \ref{cor:characterization_p-q=1}.  To prove the converse, we assume that there exists no isotropic basis  $(\vartheta_+,\theta_+,\eta_+,\vartheta_-,\theta_-,\eta_-,n)$ such that $\alpha=\vartheta_+\wedge\theta_+\wedge\eta_+$. Then, we can write:
\begin{equation*}
\alpha = \phi_0 + \phi^{\perp},
\end{equation*}

\noindent
where $\phi_0 = b\,\vartheta_+ \wedge\theta_+ \wedge \eta_+$ for a constant $b\in \mathbb{R}$ and $\phi^{\perp} (\vartheta^{\sharp}_+  , \theta^{\sharp}_+ , \eta^{\sharp}_+) = 0$. Solving now $\alpha\vee\beta\vee\alpha=-8\alpha$ with $\beta = \vartheta_- \wedge\theta_- \wedge \eta_-$ a long computation gives $\phi^{\perp} = 0$ and $b^2 =1$, which gives a contradiction. 
\end{proof}

\noindent
The theory of spinorial exterior forms that we have used allows, as exemplified by the previous theorem, to obtain results without relying on any knowledge about the representation theory of the stabilizer of the class of spinors under consideration. The price to pay however is dealing with long algebraic computations within a rich algebraic structure that contains all the necessary information. On the other hand, once the square of a spinor is computed, as in the previous theorem, the result can be typically used to simplify the study of the stabilizer of the corresponding spinor. We proceed to do so in the following for isotropic spinors. Let $u=\vartheta^\sharp,v=\theta^\sharp,w=\eta^\sharp\in V$ be the dual vectors of the one-forms in Theorem \ref{thm:null_spinor}. Hence, they are isotropic and orthogonal. We can decompose our vector space $V$ as $V=U\oplus W$, where $U=\escal{u,v,w}$ and we complete $(u,v,w)$ to an isotropic basis of $V$. That is, we choose a basis $W=\escal{u',v',w',n}$, where $u',v',w'$ are isotropic, orthogonal and conjugate to $(u,v,w)$, namely:
\begin{equation*}
\escal{u,u'}=\escal{v,v'}=\escal{w,w'}=1
\end{equation*}

\noindent
with $n\in V$ of unit-norm and orthogonal to $(u,v,w,u',v',w')$. In this basis the metric $h$ takes the form \begin{equation}\label{eq:matrix_null_basis}
    h=\begin{pmatrix}
    0&\mathbbm{1}_3&0\\
    \mathbbm{1}_3&0&0\\
    0&0&1
\end{pmatrix},
\end{equation}

\noindent
where $\mathbbm{1}_3$ denotes the $3\times3$ identity matrix. The fact that by Theorem \ref{thm:null_spinor} we explicitly know the square of an isotropic irreducible spinor together with the fact that the square spinor map is equivariant, allows us to easily compute the Lie algebra of an irreducible isotropic spinor, complementing the characterization given by Kath in \cite{Kat98}. We proceed to do this in the remainder of this subsection.

\begin{prop}
\label{prop:stabilizer_isotropic}
Let $\alpha=\vartheta\wedge\theta\wedge\eta$ be the square of an isotropic irreducible spinor $\varepsilon\in \Sigma$. Then the Lie algebra $\mathrm{Lie}(\mathrm{Stab}(\varepsilon))\subset\mathfrak{so}(4,3)$ of the stabilizer $\mathrm{Stab}(\varepsilon)$ of $\varepsilon$ in $\Spin_0(4,3)$ is given by:
\begin{equation*}
\left\{\begin{pmatrix}
A&B&v\\
0&-A^t&0\\
0&-v^t&0
\end{pmatrix}\mid A\in\mathfrak{sl}(3,\R),B\in\mathfrak{so}(3,\R),v\in\R^3\right\}\cong\mathfrak{sl}(3,\R)\ltimes(\Lambda^2\R^3\oplus\R^3),
\end{equation*}

\noindent
where $\Lambda^2\R^3\oplus\R^3$ denotes the unique six-dimensional two-step nilpotent Lie algebra with three-dimensional derived subalgebra.
\end{prop}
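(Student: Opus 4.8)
The plan is to reduce the computation of $\mathrm{Lie}(\Stab(\varepsilon))$ to a purely linear–algebraic problem about the decomposable three-form $\alpha = \vartheta\wedge\theta\wedge\eta$, exploiting the equivariance of the spinor square map. First I would invoke Proposition \ref{prop:equivariance}: differentiating the identity $\cE^\mu_l(\gamma_l(\exp(tX))\varepsilon) = \lambda(\exp(tX))\cE^\mu_l(\varepsilon)$ at $t=0$ shows that if $X\in\mathfrak{spin}(4,3)$ annihilates $\varepsilon$ then $\lambda_\ast(X)\in\mathfrak{so}(4,3)$ annihilates $\alpha$ under the natural action of $\mathfrak{so}(4,3)$ on $\Lambda^3V^*$ by derivations. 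Since $\lambda_\ast\colon \mathfrak{spin}(4,3)\to\mathfrak{so}(4,3)$ is a Lie algebra isomorphism, this embeds $\mathrm{Lie}(\Stab(\varepsilon))$ into the infinitesimal stabilizer $\mathfrak{so}_\alpha := \{X\in\mathfrak{so}(4,3)\mid X\cdot\alpha = 0\}$. As the computation below gives $\dim\mathfrak{so}_\alpha = 14$, which agrees with the dimension of $\mathrm{Lie}(\Stab(\varepsilon))$ obtained by Kath \cite{Kat98}, this embedding is an isomorphism and it suffices to compute $\mathfrak{so}_\alpha$. Alternatively, one can argue directly at the group level that $\lambda$ identifies $\Stab(\varepsilon)$ with $\Stab(\alpha)\cap\SO_0(4,3)$, using that the spinor square map is two-to-one on nonzero spinors.

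Second, I would carry out the explicit computation in the isotropic basis. In the basis $(u,v,w,u',v',w',n)$ of $V$ with metric \eqref{eq:matrix_null_basis}, the flat isomorphism sends $\vartheta = u^\flat$, $\theta = v^\flat$, $\eta = w^\flat$ to the covectors dual to $u',v',w'$, so that $\alpha = e^4\wedge e^5\wedge e^6$ in the dual basis $(e^1,\ldots,e^7)$. Writing a general element of $\mathfrak{so}(4,3)$ in $3+3+1$ block form adapted to \eqref{eq:matrix_null_basis}, the skew-symmetry condition $X^t h + hX = 0$ forces
\[
X=\begin{pmatrix} A & B & p \\ C & -A^t & q \\ -q^t & -p^t & 0\end{pmatrix}, \qquad A\in\mathfrak{gl}(3,\R),\ B,C\in\mathfrak{so}(3,\R),\ p,q\in\R^3.
\]
Imposing $X\cdot\alpha = 0$ and expanding the derivation action on $e^4\wedge e^5\wedge e^6$ term by term, the coefficient of $e^4\wedge e^5\wedge e^6$ yields $\tr A = 0$, the coefficients of the forms $e^i\wedge e^5\wedge e^6$, $e^4\wedge e^j\wedge e^6$ and $e^4\wedge e^5\wedge e^k$ (for $i,j,k\in\{1,2,3\}$) yield $C = 0$, and the coefficients of the forms involving $e^7$ yield $q = 0$, while $A\in\mathfrak{sl}(3,\R)$, $B\in\mathfrak{so}(3,\R)$ and $p\in\R^3$ remain free. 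This recovers exactly the asserted matrix form with $v = p$. The main obstacle here is bookkeeping: one must track signs carefully when reordering the wedge products and use the $\flat$-identification to locate $\alpha$ correctly in the dual basis, so that precisely the conditions $C=0$, $q=0$, $\tr A = 0$ (and no others) emerge.

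Finally, I would read off the Lie algebra structure from the matrix realization. Setting $\mathfrak{n} := \{X\mid A = 0\}$, a direct bracket computation for two such elements gives an $[X_1,X_2]$ whose $A$- and $p$-blocks vanish and whose $B$-block equals $p_2 p_1^t - p_1 p_2^t \in \mathfrak{so}(3,\R)$; hence $\mathfrak{n}$ is two-step nilpotent with centre $\{(B,0)\}\cong\Lambda^2\R^3$ equal to its derived subalgebra, i.e. the unique six-dimensional two-step nilpotent Lie algebra with three-dimensional derived algebra. The complement $\{(A,0,0)\}$ is a copy of $\mathfrak{sl}(3,\R)$, and a further routine bracket computation shows that $[(A,0,0),(0,B,p)]$ lands in $\mathfrak{n}$ and acts through the natural representations of $\mathfrak{sl}(3,\R)$ on $\R^3$ (via $p\mapsto Ap$) and on $\Lambda^2\R^3$. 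This exhibits the semidirect structure $\mathrm{Lie}(\Stab(\varepsilon))\cong\mathfrak{sl}(3,\R)\ltimes(\Lambda^2\R^3\oplus\R^3)$, as claimed.
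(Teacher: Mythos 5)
Your proposal is correct and follows essentially the same route as the paper: the paper likewise uses equivariance of the square map to reduce to the infinitesimal stabilizer of $\alpha=\vartheta\wedge\theta\wedge\eta$ in $\mathfrak{so}(4,3)$, works in the same $3+3+1$ block form with respect to the metric \eqref{eq:matrix_null_basis} (extracting $C=0$, $q=0$, $\operatorname{tr}A=0$ via the invariant subspace $U=\langle u,v,w\rangle$ rather than coefficient by coefficient), and verifies the bracket $[p_1,p_2]=p_2p_1^t-p_1p_2^t$ exactly as you do. Your explicit justification that $\mathrm{Lie}(\mathrm{Stab}(\varepsilon))\hookrightarrow\{X\in\mathfrak{so}(4,3)\mid X\cdot\alpha=0\}$ is an isomorphism (via the dimension count against Kath's result) makes precise a step the paper leaves implicit, though your group-level aside should be stated only up to the $\mathbb{Z}_2$ ambiguity the paper itself notes.
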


\begin{proof}
Let us consider 
\begin{equation*}
M=\begin{pmatrix}
        A&B&v\\
        C&D&u\\
        x^t&y^t&r
    \end{pmatrix}\in\mathfrak{gl}(7,\R),
\end{equation*}

\noindent
where $A,B,C,D\in\mathfrak{gl}(3,\R)$, $v,u,x,y\in\R^3$ and $r\in\R$. Since $M$ must belong to $\mathfrak{so}(V,h)\cong\mathfrak{so}(4,3)$, for $h$ as in \eqref{eq:matrix_null_basis}, we get that $B,C\in\mathfrak{so}(3,\R)$, $D=-A^t$, $x=-u$, $y=-v$ and $r=0$. Moreover, since $M$ must preserve $\alpha$, i.e.\ $M^*\alpha=\alpha$, then we get that $M(U)\subset U$ (where we have the decomposition $V=U\oplus W$), thus $C=0$, $u=0$ and $A\in\mathfrak{sl}(3,\R)$. The Lie algebra $\mathfrak{so}(3,\R)$ can be identified, as a vector space, with $\Lambda^2\R^3$. In fact, we have that $\Lambda^2\R^3$ is an abelian Lie algebra and the only non-zero brackets of $\Lambda^2\R^3\oplus\R^3$ are given by: 
\begin{equation*}
\left[\begin{pmatrix}
        0&0&v_1\\
        0&0&0\\
        0&-v_1^t&0
    \end{pmatrix},\begin{pmatrix}
        0&0&v_2\\
        0&0&0\\
        0&-v_2^t&0
    \end{pmatrix}\right]=\begin{pmatrix}
        0&-v_1\otimes v_2^t+v_2\otimes v_1^t&0\\
        0&0&0\\
        0&0&0
    \end{pmatrix}\in\Lambda^2\R^3,
\end{equation*}

\noindent
from where one can see that $\Lambda^2\R^3\oplus\R^3$ is two-step nilpotent and has three-dimensional derived subalgebra, which uniquely characterizes it.
\end{proof}

\noindent
This result was already obtained in \cite[Corollary 2.1]{Kat98} in a different way, and it is complemented here by explicitly obtaining that the six-dimensional nilpotent Lie algebra is $\Lambda^2\R^3\oplus\R^3$, which corresponds to $(0,0,0,12,13,23)$ in the notation of Salamon \cite{Salamon:ComplexStructures}.  



\section{Parallel spinors on \texorpdfstring{$\mathrm{G}^{\ast}_2$}{G2*}-manifolds}
\label{sec:differentialspinors}


In this section, we extend the algebraic theory of spinors and exterior forms of Section \ref{sec:algebraic_spinors} to bundles of real irreducible Clifford modules equipped with an arbitrary connection and an admissible bilinear pairing. This allows us to define and study parallel spinors on pseudo-Riemannian manifolds of signature $(4,3)$. 


\subsection{Preliminaries}


Let $(M,g)$ be a connected and oriented pseudo-Riemannian manifold of signature $(4,3)$. We denote by $\mathrm{Cl}(M,g)$ the bundle of real Clifford algebras of the cotangent bundle $(T^*M,g^*)$, which is modeled on the real Clifford algebra $\mathrm{Cl}(4,3)$.

\begin{definition}
A \emph{bundle of real Clifford modules} on $(M,g)$ is a pair $(S,\Gamma)$, where $S$ is a real vector bundle and $\Gamma:\mathrm{Cl}(M,g)\to\End(S)$ is a unital morphism of bundles of unital and associative real algebras.
\end{definition}

\noindent
For each point $m\in M$ the unital morphism of associative algebras $\Gamma_m:\mathrm{Cl}(T^*_mM,g^*_m)\to\End(S_m)$ defines a Clifford module denoted by $(S_m,\Gamma_m)$. There exists a Clifford module $(\Sigma,\gamma)$, unique up to isomorphism in the category of \emph{unbased} Clifford algebras \cite{LS19,LS18}, such that $(S_m,\Gamma_m)\cong(\Sigma,\gamma)$ for every $m\in M$. We say then that $(S,\Gamma)$ is a bundle of real Clifford modules of \emph{type} $(\Sigma,\gamma)$.

\begin{definition}\textcolor{white}{}
\begin{itemize}
\itemsep 0em
\item A \emph{paired spinor bundle of type $(\Sigma,\gamma)$} is a triple $(S,\Gamma,\Scr B)$ consisting of a bundle of real Clifford modules $(S,\Gamma)$ of type $(\Sigma,\gamma)$ equipped with an admissible bilinear pairing $\Scr B$ \cite{AC97,ACDVP05}. An \emph{irreducible paired spinor bundle} is a paired spinor bundle $(S,\Gamma,\Scr B)$ of irreducible type $(\Sigma,\gamma)$.
\item Let $(S,\Gamma,\Scr B)$ be an irreducible paired spinor bundle. A global section $\varepsilon\in\Gamma(S)$ is an \emph{irreducible spinor} on $(M,g)$.
\item Let $(S,\Gamma,\Scr B)$ be a paired spinor bundle of $(M,g)$ and let $\cD$ be a connection on $S$. A section $\varepsilon\in\Gamma(S)$ is a \emph{parallel spinor} on $(M,g)$ relative to $\cD$ if $$\cD\varepsilon=0.$$
\end{itemize}
\end{definition}

\begin{remark}
As stated in the previous definition, we have chosen the term \emph{parallel spinor} to refer to a spinor parallel under a general connection on the spinor bundle. Given the plethora of names, such as parallel spinors, Killing spinors, Codazzi spinors, Cauchy spinors, skew-torsion parallel spinors, skew-Killing spinors, or generalized Killing spinors, used in the literature to refer to spinors parallel under various connections, we concluded it was necessary to coin a simple and intuitive term to refer to a spinor parallel under any given connection on the spinor. Although the term \emph{parallel spinor} with no further decorations usually refers to a spinor parallel under the Levi-Civita connection, we felt it was unnecessary to introduce more complicated terminology.  
\end{remark}

\begin{remark}
It is guaranteed that an admissible bilinear pairing $\Scr B$ exists on an irreducible spinor bundle $(S,\Gamma)$ if $(M,g)$ is \emph{strongly spin}, that is if every spin structure on $(M,g)$ admits a reduction to the identity component of the spin group.
\end{remark}

\noindent
Let $(S,\Gamma,\Scr B)$ be an irreducible paired spinor bundle in signature $(4,3)$, which thus satisfies $p-q\equiv_81$. Then, by \cite{LS18,LS19} the triple $(S,\Gamma,\Scr B)$ is associated to a spin structure and we can write $\cD=\nabla-\cA$ for a unique element $\cA\in\Omega^1(\End(S))$, where $\nabla$ is the lift of the Levi-Civita connection to $S$. In this case, the equation satisfied by the parallel spinor can be equivalently written as: 
\begin{equation}
\label{eq:diff_spinor}
\nabla\varepsilon=\cA(\varepsilon).
\end{equation}

\noindent
Solutions to \eqref{eq:diff_spinor} are called parallel spinors \emph{relative} to $\cA$ of type $(\Sigma,\gamma_l)$. Recall that associated to $(S,\Gamma)$ we obtain an isomorphism of bundles of unital and associative algebras:
\begin{equation*}
\Psi_\Gamma^<:(\Lambda^<T^*M,\vee)\to\End(S)
\end{equation*}

\noindent
as well as a \emph{quadratic map}:
\begin{equation*}
  \cE^\mu_\Gamma:S\to\Lambda^<T^*M  
\end{equation*}

\noindent
which we extend pointwise to sections $\Gamma(S)$ of $S$ and denote by the same symbol. Evaluating $\cA$ on a vector field $v\in\fX(M)$ we obtain a field of endomorphisms $\cA_v\in\Gamma(\End(S))$ to which we can apply the inverse of $\Psi^<_\Gamma$ in order to obtain a section of $\Lambda^<T^*M$. We then define:
\begin{equation*}
\fra_v:=(\Psi^<_\Gamma)^{-1}(\cA_v),\quad v\in\fX(M)
\end{equation*}

\noindent
and we denote by $\fra\in\Omega^1(\Lambda^<T^*M)$ the associated one-form taking values in $\Lambda^<T^*M$. We refer to $\fra$ as the \emph{dequantization} of $\cA$, and for simplicity in the exposition, we will sometimes refer to solutions of \eqref{eq:diff_spinor} as parallel spinors relative to $\fra$, instead of $\cA$. These are not equivalent, since defining $\cA$ requires specifying an underlying spinor bundle, while referring to $\fra$ does not, as the latter is simply a one-form valued exterior form. Indeed, there could be one-form valued endomorphisms of different spinor bundles whose \emph{dequantization} gives the same one-form valued exterior form $\fra$. As we will see below in Proposition \ref{prop:generaldifferentialspinor}, this potential ambiguity in $\cA$ is irrelevant for studying parallel spinors through their square since only the dequantization $\fra$ appears in Equation \eqref{eq:master_formula}. The following proposition, which we will apply extensively in the following, is a direct application of \cite[Theorem 3.6]{Sha24} to signature $(4,3)$.

\begin{prop} 
\label{prop:generaldifferentialspinor}
    A strongly spin pseudo-Riemannian manifold $(M,g)$ of signature $(4,3)$ admits a parallel spinor of type $(\Sigma,\gamma_l)$ relative to $\cA$ with given dequantization $\fra\in \Omega^1(\Lambda^<T^*M)$ if and only if there exists a nowhere vanishing exterior form $\alpha\in\Gamma(\Lambda^<T^*M)$ such that the following differential system is satisfied: \begin{equation}\label{eq:master_formula}
        \nabla\alpha=\fra\vee\alpha+\alpha\vee(\pi\circ\tau)(\fra)
    \end{equation} and either the following algebraic equations are satisfied for every exterior form $\beta\in\Gamma(\Lambda^<T^*M)$: $$(\pi\circ\tau)(\alpha)= \alpha,\quad\alpha\vee\beta\vee\alpha=8(\alpha\vee\beta)^{(0)}\alpha$$ or, equivalently, the following equations: $$\alpha\vee\alpha=8\alpha^{(0)}\alpha,\quad(\pi \circ\tau)(\alpha)= \alpha,\quad\alpha\vee\beta\vee\alpha=8(\alpha\vee\beta)^{(0)}\alpha$$ are satisfied for some fixed exterior form $\beta\in\Gamma(\Lambda^<T^*M)$ such that $(\alpha\vee\beta)^{(0)}\neq0$.
\end{prop}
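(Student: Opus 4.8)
The plan is to prove both implications by passing through the square $\alpha=\cE^\mu_\Gamma(\varepsilon)$ and splitting the statement into an algebraic part, governed fiberwise by Corollary \ref{cor:characterization_p-q=1}, and a differential part, encoded by the master formula \eqref{eq:master_formula}. Since $(M,g)$ is strongly spin of signature $(4,3)$, we have $p-q\equiv_8 1$, the fiberwise square map coincides with the map of Section \ref{subsec:Shabazi}, and its image at each point is characterized by the algebraic relations of Corollary \ref{cor:characterization_p-q=1}. Thus the algebraic half holds pointwise for free: an exterior form $\alpha\in\Gamma(\Lambda^<T^*M)$ is fiberwise the square of a spinor precisely when it satisfies at every point either set of algebraic equations in the statement, the two sets being equivalent by that corollary. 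It then remains to match the parallel condition \eqref{eq:diff_spinor} with the master formula, and to arrange the fiberwise reconstruction into a global section.

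For the forward direction, suppose $\varepsilon$ is a nonzero parallel spinor relative to $\cA$; as $\varepsilon$ is parallel under $\cD=\nabla-\cA$ and $M$ is connected, $\varepsilon$ is nowhere vanishing, hence so is $\alpha$. The key computation is to differentiate the endomorphism $\mu\,\varepsilon\otimes\varepsilon^*$ representing $\alpha$, where $\varepsilon^*=\Scr B(-,\varepsilon)$. Because the Clifford multiplication (hence $\Psi^<_\Gamma$) and the admissible pairing $\Scr B$ are parallel for the Levi-Civita lift $\nabla$, one has $\nabla_v\varepsilon^*=\Scr B(-,\nabla_v\varepsilon)$ and $\nabla_v\Psi^<_\Gamma(\omega)=\Psi^<_\Gamma(\nabla_v\omega)$. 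Substituting $\nabla_v\varepsilon=\cA_v(\varepsilon)$ and using the symmetry of $\Scr B$, a Leibniz computation gives, for $v\in\fX(M)$,
\begin{equation*}
\nabla_v(\mu\,\varepsilon\otimes\varepsilon^*)=\cA_v\circ(\mu\,\varepsilon\otimes\varepsilon^*)+(\mu\,\varepsilon\otimes\varepsilon^*)\circ\cA_v^{\mathrm{t}},
\end{equation*}
where $\cA_v^{\mathrm{t}}$ is the $\Scr B$-adjoint of $\cA_v$. The defining property $\Scr B(\gamma_l(x)\cdot,\cdot)=\Scr B(\cdot,\gamma_l((\pi\circ\tau)(x))\cdot)$ identifies the $\Scr B$-adjoint of $\Psi^<_\Gamma(\omega)$ with $\Psi^<_\Gamma((\pi\circ\tau)(\omega))$, so $\cA_v^{\mathrm{t}}=\Psi^<_\Gamma((\pi\circ\tau)(\fra_v))$. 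Dequantizing by applying $(\Psi^<_\Gamma)^{-1}$, under which composition becomes $\vee$, produces exactly $\nabla_v\alpha=\fra_v\vee\alpha+\alpha\vee(\pi\circ\tau)(\fra_v)$, i.e.\ the master formula; the algebraic equations hold fiberwise by the first paragraph.

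For the converse, assume $\alpha$ is nowhere vanishing and satisfies \eqref{eq:master_formula} together with the algebraic equations. By Corollary \ref{cor:characterization_p-q=1} each fiber $\alpha_m$ is the square of a spinor $\varepsilon_m\in S_m$, unique up to an overall sign. Running the previous computation backwards, the quantized master formula forces $\xi\otimes\varepsilon^*+\varepsilon\otimes\xi^*=0$ for any local sign-lift $\varepsilon$ of $\alpha$, where $\xi:=\nabla_v\varepsilon-\cA_v(\varepsilon)$; evaluating this endomorphism on $\varepsilon$ and on a spinor $\psi$ with $\Scr B(\psi,\varepsilon)\neq0$ (which exists by nondegeneracy of $\Scr B$) forces $\xi$ to be proportional to $\varepsilon$ and then to vanish, in both the non-isotropic and isotropic cases. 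Hence any local sign-lift is automatically parallel relative to $\cA$.

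The main obstacle is the final, global step: the fiberwise reconstruction determines $\varepsilon$ only up to sign, so one must glue the local parallel lifts into a single global parallel spinor. Here one fixes $\varepsilon$ at a basepoint and propagates it by $\cD$-parallel transport; since $\alpha$ is $\cD$-parallel by the master formula and the square map commutes with parallel transport (the bundle form of Proposition \ref{prop:equivariance}), each transported spinor squares to $\alpha$ and hence agrees with the fiberwise reconstruction up to sign, so connectedness of $M$ pins down a consistent global choice. This is precisely the content of the general correspondence of \cite[Theorem 3.6]{Sha24}, of which the present proposition is the specialization to signature $(4,3)$; accordingly the cleanest route is to verify that the hypotheses of that theorem hold here, namely strong spin, $p-q\equiv_8 1$, and the algebraic dictionary of Corollary \ref{cor:characterization_p-q=1}, and to invoke it directly.
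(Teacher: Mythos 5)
Most of your proposal is sound, and your closing move is in fact the paper's entire proof: the paper gives no argument at all beyond declaring the proposition ``a direct application of \cite[Theorem 3.6]{Sha24} to signature $(4,3)$,'' so checking the hypotheses (strongly spin, $p-q\equiv_8 1$, the algebraic dictionary of Corollary \ref{cor:characterization_p-q=1}) and invoking that theorem is exactly the intended route. Your forward computation is also a correct reconstruction of the local content of that theorem: the Leibniz rule on $\mu\,\varepsilon\otimes\varepsilon^*$, the identification of the $\Scr B$-adjoint of $\Psi^<_\Gamma(\omega)$ with $\Psi^<_\Gamma((\pi\circ\tau)(\omega))$ via admissibility, and the dequantization to \eqref{eq:master_formula}; likewise the converse evaluation argument forcing $\xi=\nabla_v\varepsilon-\cA_v(\varepsilon)$ first into $\langle\varepsilon\rangle$ and then to zero works in both the isotropic and non-isotropic cases.

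The one genuine gap is your global step: the claim that ``connectedness of $M$ pins down a consistent global choice'' of sign is false. The local parallel sign-lifts determine a $\{\pm 1\}$-monodromy (a flat real line bundle), and this monodromy can be nontrivial even on a connected base. Concretely, take $M=S^1_t\times\R^6$ flat, $\fra=\tfrac{\pi}{T}\,\dd t\otimes e^1\wedge e^2$ with $e^1,e^2$ orthonormal spacelike; then $\cD$-parallel transport around the circle on a fixed spinor bundle is $\exp\big(\pi\,\Psi^<_\Gamma(e^1\wedge e^2)\big)=-\Id$, while the induced transport on $\Lambda^<T^*M$ is the identity (rotation by $2\pi$), so a global $\alpha$ satisfying \eqref{eq:master_formula} and the algebraic constraints exists although that bundle carries no $\cD$-parallel section. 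The resolution — and the reason the proposition is phrased for a given dequantization $\fra$ rather than for $\cA$ on a fixed bundle, cf.\ the discussion preceding it and the clause ``for some irreducible paired spinor bundle'' in Proposition \ref{prop:intermediate_formula} — is that the sign monodromy is absorbed by twisting the spin structure: in the example the antiperiodic lift is a genuine parallel section of the spinor bundle associated with the other spin structure on $S^1$, whose connection has the same dequantization $\fra$. So your gluing must either incorporate this twist, or you must lean on \cite[Theorem 3.6]{Sha24} for the global statement as you ultimately suggest; with that citation your argument is complete, but as a self-contained proof the final paragraph, as written, would fail.
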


\noindent
Now we consider the most general parallel spinor on a strongly spin pseudo-Riemannian manifold $(M,g)$ of signature $(4,3)$, obtaining a general differential system for its associated spinorial exterior form. We know by Proposition \ref{prop:algebraic_characterization} that an exterior form $\alpha$ is the square of a spinor only if $\alpha=f+\phi$ for $f\in\cC^\infty(M)$ and $\phi\in\Omega^3(M)$. We use this result together with Proposition \ref{prop:generaldifferentialspinor} to characterize when $\alpha$ corresponds to a parallel spinor relative to $\cA\in\Omega^1(\End(S))$ with dequantization $\fra\in\Omega^1(\Lambda^<T^*M)$.

\begin{prop}
\label{prop:intermediate_formula}
A strongly spin pseudo-Riemannian manifold $(M,g)$ of signature $(4,3)$ admits a parallel spinor of type $(\Sigma,\gamma_l)$ relative to $\fra\in \Omega^1(\Lambda^<T^*M)$ if and only if the following system is satisfied for every $v\in \mathfrak{X}(M)$:
\begin{align}
\nabla_vf&=2f\fra_v^{(0)}-2\escal{\fra_v^{(3)},\phi}, \label{eq:generalsystem1}\\
\nabla_v\phi&=2f\fra_v^{(3)}+2\fra_v^{(0)}\phi+2l*(\fra_v^{(1)}\wedge\phi)-2\fra_v^{(2)}\triangle_1\phi+2l*(\fra_v^{(3)}\triangle_1\phi),
\label{eq:generalsystem2}
\end{align} 
where $f+\phi\in\Im(\cE^{+}_{\Gamma})\subset\Gamma(\Lambda^<T^*M)$ for some irreducible paired spinor bundle $(S,\Gamma,\cB)$ on $(M,g)$.
\end{prop}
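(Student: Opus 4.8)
The plan is to apply Proposition \ref{prop:generaldifferentialspinor} with the concrete parametrization $\alpha = f + \phi$, where $f\in\cC^\infty(M)$ and $\phi\in\Omega^3(M)$, which is the only admissible form for the square of a spinor by Proposition \ref{prop:algebraic_characterization} (the condition $(\pi\circ\tau)(\alpha)=\alpha$ forces $\alpha^{(1)}=\alpha^{(2)}=0$). The algebraic constraints in Proposition \ref{prop:generaldifferentialspinor} are already encoded in this Ansatz, so the entire content of the statement lies in unpacking the single master differential equation \eqref{eq:master_formula}, namely $\nabla\alpha = \fra\vee\alpha + \alpha\vee(\pi\circ\tau)(\fra)$, into its degree-zero and degree-three components. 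First I would fix a vector $v\in\fX(M)$ and write $\fra_v = \fra_v^{(0)} + \fra_v^{(1)} + \fra_v^{(2)} + \fra_v^{(3)}\in\Lambda^<T^*M$, since the master formula is a one-form identity that must hold after contraction with any $v$.

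The central computation is to expand $\fra_v\vee\alpha + \alpha\vee(\pi\circ\tau)(\fra_v)$ using the structure of the $\vee$-product and the reversion formulas $\pi(\beta)=(-1)^a\beta$, $\tau(\beta)=(-1)^{\binom{a}{2}}\beta$ for $\beta\in\Lambda^aV^*$. Applying $\pi\circ\tau$ degree by degree gives $(\pi\circ\tau)(\fra_v) = \fra_v^{(0)} - \fra_v^{(1)} - \fra_v^{(2)} + \fra_v^{(3)}$. Substituting $\alpha = f+\phi$, the two products expand as a sum over the generalized products $\triangle_k$ followed by the projections $\cP_l$ and $\cP_<$ that define $\vee$, exactly as carried out in the proof of Lemma \ref{lemma:(alpha-beta)0}. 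The strategy is to collect terms by form-degree and exploit the symmetry $\fra_v\vee\alpha \pm \alpha\vee(\pi\circ\tau)(\fra_v)$: for each homogeneous piece $\fra_v^{(k)}$, the two terms either add or cancel depending on the parity introduced by $\pi\circ\tau$, which is precisely the mechanism that eliminates the degree-one and degree-two components of the right-hand side and leaves only the scalar equation \eqref{eq:generalsystem1} and the three-form equation \eqref{eq:generalsystem2}. In particular the $f\fra_v^{(0)}$, $\escal{\fra_v^{(3)},\phi}$, $\fra_v^{(0)}\phi$, $f\fra_v^{(3)}$, $l*(\fra_v^{(1)}\wedge\phi)$, $\fra_v^{(2)}\triangle_1\phi$ and $l*(\fra_v^{(3)}\triangle_1\phi)$ terms should emerge directly from the relevant $\triangle_k$-contractions, using the identity \eqref{eq:product_volume} to convert $*\tau$ factors arising from $\cP_l$.

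The main obstacle is bookkeeping: one must track which generalized products $\fra_v^{(k)}\triangle_j(f+\phi)$ survive the degree restriction $\alpha^{(0)}\oplus\alpha^{(3)}$ after applying $\cP_<$, and correctly assign the signs $(-1)^{\binom{j+1}{2}+kj}$ from the expansion of $\diamond$ together with the Hodge-duality signs from $\cP_l$. The scalar component \eqref{eq:generalsystem1} should follow from Lemma \ref{lemma:(alpha-beta)0}, which already identifies the degree-zero part of $\phi\vee\beta$ as $-\escal{\phi,\beta^{(3)}}$; here $\beta$ plays the role of the various homogeneous pieces of $\fra_v$, and the factor of two reflects the sum of the two symmetric terms in the master formula. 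For the three-form component \eqref{eq:generalsystem2} the delicate points are the $\triangle_1$ contractions against $\phi$, where I would verify that $\fra_v^{(2)}\triangle_1\phi$ and $*(\fra_v^{(3)}\triangle_1\phi)$ genuinely land in $\Lambda^3V^*$ (using $\fra_v^{(3)}\triangle_1\phi\in\Lambda^4V^*$ as in the algebraic characterization preceding Proposition \ref{prop:algebraic_characterization}) and that all other generalized products either vanish by degree or are projected away. Once both components are matched to the claimed right-hand sides, the equivalence follows immediately from Proposition \ref{prop:generaldifferentialspinor}, since \eqref{eq:generalsystem1}--\eqref{eq:generalsystem2} are exactly the homogeneous components of \eqref{eq:master_formula} under the Ansatz $\alpha=f+\phi$.
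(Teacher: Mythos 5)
Your proposal is correct and follows essentially the same route as the paper's proof: substitute the ansatz $\alpha=f+\phi$ (with the algebraic constraints absorbed into $f+\phi\in\Im(\cE^{+}_{\Gamma})$) into the master formula \eqref{eq:master_formula}, use $(\pi\circ\tau)(\fra_v)=\fra_v^{(0)}-\fra_v^{(1)}-\fra_v^{(2)}+\fra_v^{(3)}$ so that the symmetric/antisymmetric combinations $\fra_v^{(k)}\vee\phi\mp\phi\vee\fra_v^{(k)}$ isolate exactly the terms $2l*(\fra_v^{(1)}\wedge\phi)$, $-2\fra_v^{(2)}\triangle_1\phi$, $2l*(\fra_v^{(3)}\triangle_1\phi)$ and $-2\escal{\fra_v^{(3)},\phi}$, and compare degrees. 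All the bookkeeping checks you flag (degree counting of the $\triangle_1$ contractions, the role of Lemma \ref{lemma:(alpha-beta)0} for the scalar component) are precisely the steps carried out in the paper's computation.
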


\begin{proof}
    For every vector field $v\in\fX(M)$ we have $\fra_v=\fra_v^{(0)}+\fra_v^{(1)}+\fra_v^{(2)}+\fra_v^{(3)}$, where $\fra_v^{(j)}\in\Omega^j(M)$. The right-hand side of Equation \eqref{eq:master_formula} becomes \begin{align*}
        \fra_v\vee\alpha+\alpha\vee(\pi\circ\tau)(\fra_v)&=(\fra_v^{(0)}+\fra_v^{(1)}+\fra_v^{(2)}+\fra_v^{(3)})\vee(f+\phi)+(f+\phi)\vee(\fra_v^{(0)}-\fra_v^{(1)}-\fra_v^{(2)}+\fra_v^{(3)})\\
        &=2f\fra_v^{(0)}+2f\fra_v^{(3)}+2\fra_v^{(0)}\phi\\
        &\quad+(\fra_v^{(1)}\vee\phi-\phi\vee\fra_v^{(1)})+(\fra_v^{(2)}\vee\phi-\phi\vee\fra_v^{(2)})+(\fra_v^{(3)}\vee\phi+\phi\vee\fra_v^{(3)})\\
        &=2f\fra_v^{(0)}+2f\fra_v^{(3)}+2\fra_v^{(0)}\phi\\
        &\quad+2l*(\fra_v^{(1)}\wedge\phi)-2\fra_v^{(2)}\triangle_1\phi+2l*(\fra_v^{(3)}\triangle_1\phi)-2\escal{\fra_v^{(3)},\phi}.
    \end{align*} Comparing degrees we find that Equation \eqref{eq:master_formula} becomes the system of the statement.
\end{proof}


\subsection{Parallel spinors and \texorpdfstring{$\mathrm{G}^{\ast}_2$}{G2*}-structures}
\label{subsec:diff_spinor_G2*}


Given a spinor $\varepsilon\in\Gamma(S)$ of non-zero norm we can decompose $S=\escal{\varepsilon}\oplus\escal{\varepsilon}^\perp$ where $\escal{\varepsilon}^\perp\subset S$ denotes the orthogonal complement of $\varepsilon$ with respect to $\cB$. We have the isomorphism:
\begin{equation*}
\escal{\varepsilon}^\perp\cong\{v\cdot\varepsilon\mid v\in\fX(M)\},
\end{equation*}

\noindent
see Remark \ref{remark:decomposition_Sigma}. This implies in particular that for every $v\in\fX(M)$, the spinor $\nabla_v\varepsilon\in\Gamma(S)$ can be written as:
\begin{equation*}
\nabla_v\, \varepsilon=\theta(v)\varepsilon+A(v)\cdot\varepsilon
\end{equation*}

\noindent
for some $\theta\in\Omega^1(M)$ and $A\in\End(TM)$. Hence, for a parallel spinor $\varepsilon$ relative to $\fra\in\Omega^1(\Lambda^<T^*M)$, it must be possible to find an element $\bar\fra\in\Omega^1(\Lambda^<T^*M)$ with $\bar\fra^{(2)}=\bar\fra^{(3)}=0$ and relative to which $\varepsilon$ is again a parallel spinor. This is verified in the following result, which characterizes parallel spinors of nowhere vanishing and possibly non-constant norm.

\begin{thm}
\label{thm:master_formula}
A strongly spin pseudo-Riemannian manifold $(M,g)$ of signature $(4,3)$ admits a nowhere isotropic parallel spinor $\varepsilon$ of type $(\Sigma,\gamma_l)$ relative to $\fra\in \Omega^1(\Lambda^<T^*M)$ if and only if there exists a nowhere vanishing three-form $\phi\in\Omega^3(M)$ satisfying the algebraic equation:
\begin{equation}
\label{eq:algebraicphinonisotropic}
  6\,\kappa \sqrt{-\frac{\escal{\phi,\phi}}{7}}\,\phi=l*(\phi\triangle_1\phi)\, ,  \qquad P_{(27)} (\fra_v) =  0 \, , \qquad \forall\,\, v\in\mathfrak{X}(M),
\end{equation}

\noindent
together with the differential system:
\begin{align*}
\nabla_v\phi&=2\bar\fra_v^{(0)}\phi+2l*(\bar\fra_v^{(1)}\wedge\phi)\, , \qquad \forall \,\, v\in\fX(M),
\end{align*}

\noindent
where:
\begin{equation*}
\bar\fra_v^{(0)}=\fra_v^{(0)}+7f\kappa_v^{(0)}\quad\text{and}\quad\bar\fra_v^{(1)}=\fra_v^{(1)}+3f\sigma_v^{(1)}+4lf\kappa_v^{(1)}
\end{equation*}

\noindent
and where $P_{(27)}\colon \Omega^3(M) \to \Omega^3_{27}(M)$ is the canonical projection with respect to the $\G_2^{\ast}$-structure defined by $\varepsilon$.
\end{thm}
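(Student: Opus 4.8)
The plan is to start from the general differential system of Proposition \ref{prop:intermediate_formula}, which already tells us that the square $\alpha = f + \phi$ of any parallel spinor relative to $\fra$ satisfies \eqref{eq:generalsystem1}--\eqref{eq:generalsystem2}, and then to simplify the right-hand side of \eqref{eq:generalsystem2} using the $\mathrm{G}_2^*$-decomposition of exterior forms. Since $\varepsilon$ is nowhere isotropic, Proposition \ref{prop:algebraic_characterization} forces $f$ to be nowhere vanishing and $\phi$ to be a nowhere-vanishing three-form with $\escal{\phi,\phi} = -7f^2 < 0$ obeying the first equation in \eqref{eq:algebraicphinonisotropic}; by Theorem \ref{thm:stabilizerG2*} this $\phi$ defines a $\mathrm{G}_2^*$-structure, so at each point I may decompose via $\Lambda^2 = \Lambda^2_7 \oplus \Lambda^2_{14}$ and $\Lambda^3 = \Lambda^3_1 \oplus \Lambda^3_7 \oplus \Lambda^3_{27}$. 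Concretely, I would write the $\Lambda^2_7$-part of $\fra_v^{(2)}$ as $\iota_{(\sigma_v^{(1)})^\sharp}\phi$ and the $\Lambda^3_1 \oplus \Lambda^3_7$-part of $\fra_v^{(3)}$ as $\kappa_v^{(0)}\phi + *(\kappa_v^{(1)} \wedge \phi)$, which is exactly what defines the quantities $\sigma_v^{(1)}$, $\kappa_v^{(0)}$, $\kappa_v^{(1)}$ appearing in the statement.

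The core computation is then to feed these components into \eqref{eq:generalsystem2} and evaluate the terms $\fra_v^{(2)} \triangle_1 \phi$ and $*(\fra_v^{(3)} \triangle_1 \phi)$ with Lemma \ref{lemma:aux-triangle1}: part (2) kills the $\Lambda^2_{14}$-contribution, parts (1) and (3) turn the $\Lambda^2_7$- and $\Lambda^3_7$-inputs into $\Lambda^3_7$-outputs with a factor proportional to $f$, the relation $*(\phi\triangle_1\phi) = 6lf\phi$ handles the $\Lambda^3_1$-input, and part (4) shows the $\Lambda^3_{27}$-input $\rho_v$ reproduces itself up to the scalar $-3lf$. Collecting the right-hand side of \eqref{eq:generalsystem2} by $\mathrm{G}_2^*$-type I expect the $\Lambda^3_1$-terms to sum to $2(\fra_v^{(0)} + 7f\kappa_v^{(0)})\phi = 2\bar\fra_v^{(0)}\phi$; the $\Lambda^3_7$-terms to sum to $2l*(\bar\fra_v^{(1)}\wedge\phi)$ with $\bar\fra_v^{(1)} = \fra_v^{(1)} + 3f\sigma_v^{(1)} + 4lf\kappa_v^{(1)}$ after the sign bookkeeping; and the $\Lambda^3_{27}$-terms to sum to $-4f\rho_v$. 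This already produces the claimed differential system, provided the $\Lambda^3_{27}$-contribution vanishes.

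The decisive step, and the one I expect to be the main obstacle, is proving that the $\Lambda^3_{27}$-component of $\nabla_v\phi$ vanishes, so that comparison with the computed right-hand side gives $-4f\rho_v = 0$ and hence $\rho_v = P_{(27)}(\fra_v) = 0$ since $f$ is nowhere zero. The raw system \eqref{eq:generalsystem2} does not hand this to us, so I would supply an independent argument by differentiating the pointwise algebraic constraint $6f\phi = l*(\phi\triangle_1\phi)$ along $v$: using that $\triangle_1$ is symmetric on three-forms and re-applying Lemma \ref{lemma:aux-triangle1} to the $\mathrm{G}_2^*$-components of $\nabla_v\phi$, the $\Lambda^3_{27}$-parts on the two sides appear with opposite signs and force the $\Lambda^3_{27}$-part of $\nabla_v\phi$ to vanish outright. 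Equivalently, one may invoke the splitting $S = \escal{\varepsilon} \oplus \escal{\varepsilon}^\perp$ with $\escal{\varepsilon}^\perp \cong \{v\cdot\varepsilon\}$ of Remark \ref{remark:decomposition_Sigma}, which shows directly that $\nabla_v\varepsilon$ can be written through a degree-$\le 1$ form $\bar\fra_v$, whence $\nabla_v\phi$ lands in $\Lambda^3_1 \oplus \Lambda^3_7$.

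Finally I would dispose of the degree-zero equation and run the argument backwards. Since $\escal{*(\bar\fra_v^{(1)}\wedge\phi),\phi} = 0$, the reduced differential system for $\phi$ together with $\escal{\phi,\phi} = -7f^2$ forces $\nabla_v f = 2f\bar\fra_v^{(0)}$, and expanding $\escal{\fra_v^{(3)},\phi} = \kappa_v^{(0)}\escal{\phi,\phi} = -7f^2\kappa_v^{(0)}$ shows this coincides with \eqref{eq:generalsystem1}; thus \eqref{eq:generalsystem1} is automatic and carries no extra information. For the converse, given a nowhere-vanishing $\phi$ with $\escal{\phi,\phi} < 0$ satisfying \eqref{eq:algebraicphinonisotropic} and the reduced system, Proposition \ref{prop:algebraic_characterization} produces pointwise a nowhere-isotropic spinor with square $f + \phi$, and reversing the component bookkeeping verifies the full system \eqref{eq:generalsystem1}--\eqref{eq:generalsystem2}, so Proposition \ref{prop:intermediate_formula} (equivalently Proposition \ref{prop:generaldifferentialspinor}) yields a parallel spinor relative to $\fra$. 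The genuine difficulties are the $\Lambda^3_{27}$-vanishing argument and keeping the Hodge-star and $\triangle_1$ signs consistent throughout the decomposition.
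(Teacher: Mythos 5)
Your proposal is correct and follows essentially the same route as the paper's proof: reduction via Theorem \ref{thm:stabilizerG2*} and Proposition \ref{prop:intermediate_formula}, the identical $\G_2^*$-decomposition $\fra_v^{(2)}=\iota_{(\sigma_v^{(1)})^\sharp}\phi+\sigma_v^{14}$, $\fra_v^{(3)}=\kappa_v^{(0)}\phi+*(\kappa_v^{(1)}\wedge\phi)+\kappa_v^{27}$, the same application of Lemma \ref{lemma:aux-triangle1} producing $\nabla_v\phi=-4f\kappa_v^{27}+2\bar\fra_v^{(0)}\phi+2l*(\bar\fra_v^{(1)}\wedge\phi)$ with exactly the stated $\bar\fra_v^{(0)},\bar\fra_v^{(1)}$, and the same observation that \eqref{eq:generalsystem1} reduces to $\nabla_vf=2f\bar\fra_v^{(0)}$ and is implied by the three-form equation via $\escal{\fra_v^{(3)},\phi}=-7f^2\kappa_v^{(0)}$ and $\escal{\phi,\phi}=-7f^2$. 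The one point where you go beyond the written proof is the vanishing of the $\Lambda^3_{27}$-component: the paper asserts $\kappa_v^{27}=0$ relying implicitly on the splitting $S=\escal{\varepsilon}\oplus\escal{\varepsilon}^\perp$ of Remark \ref{remark:decomposition_Sigma} discussed just before the theorem (your second argument), while your first argument — differentiating $6f\phi=l*(\phi\triangle_1\phi)$ and using the symmetry of $\triangle_1$ together with Lemma \ref{lemma:aux-triangle1}(4), so that $P_{(27)}(\nabla_v\phi)$ appears as $+6f\,P_{(27)}(\nabla_v\phi)$ on one side and $-6f\,P_{(27)}(\nabla_v\phi)$ on the other — is a correct, self-contained justification of the step the paper glosses, and checks out exactly as you predicted.
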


\begin{remark}
This result is closely related to the spinorial description of Riemannian $\G_2$-structures given in \cite{ACFH15}. Aside from the difference in signature, Theorem \ref{thm:master_formula} diverges from the description given in Op.\ Cit.\ in that we do not consider the spinor to be of constant norm, we consider a general parallel spinor, which leads to the obstruction appearing in the second equation of \eqref{eq:algebraicphinonisotropic}, and our result includes the necessary and sufficient algebraic condition contained in the first equation of \eqref{eq:algebraicphinonisotropic}.
\end{remark}

\begin{proof}
By Theorem \ref{thm:stabilizerG2*} and Proposition \ref{prop:intermediate_formula}, $(M,g)$ admits a nowhere isotropic parallel spinor relative to $\fra\in \Omega^1(\Lambda^<T^*M)$ if and only if it admits a three-form $\phi$ satisfying the algebraic relation \eqref{eq:algebraicphinonisotropic} as well as the differential system given by \eqref{eq:generalsystem1} and \eqref{eq:generalsystem2} with:
\begin{equation*}
f = \sqrt{-\frac{\escal{\phi,\phi}}{7}}.
\end{equation*}

\noindent
Since $\varepsilon$ is nowhere isotropic, it defines a $\G^{\ast}_2$-structure respect to which we consider the decomposition of $\fra_v^{(2)}$ and $\fra_v^{(3)}$ into irreducible components, that is: 
\begin{align*}
\fra_v^{(2)}&=\iota_{(\sigma_v^{(1)})^\sharp}\phi+\sigma_v^{14}\in\Omega^2_7\oplus\Omega^2_{14},\\
\fra_v^{(3)}&=\kappa_v^{(0)}\phi+*(\kappa_v^{(1)}\wedge\phi)+\kappa_v^{27}\in\Omega^3_1\oplus\Omega^3_7\oplus\Omega^3_{27}.
\end{align*}

\noindent
Using Lemma \ref{lemma:aux-triangle1} and isolating the terms that belong to the same irreducible $\G^{\ast}_2$ component, we obtain:
\begin{equation*}
\nabla_v\phi=-4f\kappa_v^{27}+2\fra_v^{(0)}\phi+14f\kappa_v^{(0)}\phi+2l*(\fra_v^{(1)}\wedge\phi)+6lf*(\sigma_v^{(1)}\wedge\phi)+8f*(\kappa_v^{(1)}\wedge\phi).
\end{equation*}

\noindent
Hence $\kappa_v^{(27)}=0$, $\bar\fra_v^{(0)}=\fra_v^{(0)}+7f\kappa_v^{(0)}$ and $\bar\fra_v^{(1)}=\fra_v^{(1)}+3f\sigma_v^{(1)}+4lf\kappa_v^{(1)}$. This shows that, given $\kappa_v^{(27)}=0$, the differential system given in \eqref{eq:generalsystem2} is equivalent to:
\begin{align*}
\nabla_vf&=2f\bar\fra_v^{(0)},\\
\nabla_v\phi&=2\bar\fra_v^{(0)}\phi+2l*(\bar\fra_v^{(1)}\wedge\phi).
\end{align*}

\noindent
Setting $f = \sqrt{-\frac{\escal{\phi,\phi}}{7}}$ as required by Theorem \ref{thm:stabilizerG2*}, it follows that the first equation is a consequence of the second and therefore we conclude.
\end{proof}

\noindent
By the previous theorem, we immediately obtain the following \emph{obstruction} result.
\begin{cor}
Let $\fra\in \Omega^1(\Lambda^<T^*M)$ and let $\varepsilon$ be a nowhere isotropic and nowhere vanishing spinor such that $P_{(27)} (\fra_v) \neq 0$ for every $v\in\mathfrak{X}(M)$. Then, $\varepsilon$ is not a parallel spinor relative to $\fra$.
\end{cor}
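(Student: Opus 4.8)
The plan is to argue by contraposition, treating Theorem~\ref{thm:master_formula} as a black box that supplies the relevant necessary condition. First I would suppose, toward a contradiction, that $\varepsilon$ \emph{is} a parallel spinor relative to $\fra$. Since $\varepsilon$ is nowhere isotropic, its positive square $\cE^{+}_{\Gamma}(\varepsilon) = f + \phi \in \Gamma(\Lambda^{<}T^*M)$ has $\escal{\phi,\phi} < 0$ everywhere by Proposition~\ref{prop:algebraic_characterization}, so the degree-three component $\phi = \cE^{+}_{\Gamma}(\varepsilon)^{(3)} \in \Omega^3(M)$ is nowhere vanishing and, by Theorem~\ref{thm:stabilizerG2*}, defines a global $\G^{\ast}_2$-structure on $(M,g)$. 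In particular the decomposition $\Omega^3(M) = \Omega^3_1(M) \oplus \Omega^3_7(M) \oplus \Omega^3_{27}(M)$, and hence the canonical projection $P_{(27)} \colon \Omega^3(M) \to \Omega^3_{27}(M)$, are well defined with respect to this structure, and this is exactly the projection appearing in the statement.

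Next I would apply the ``only if'' direction of Theorem~\ref{thm:master_formula} to this $\phi$. Because $\varepsilon$ is by assumption a nowhere isotropic parallel spinor relative to $\fra$, the theorem forces the algebraic constraints~\eqref{eq:algebraicphinonisotropic} to hold, and among them the condition $P_{(27)}(\fra_v) = 0$ for every $v \in \mathfrak{X}(M)$. This is precisely the negation of the hypothesis $P_{(27)}(\fra_v) \neq 0$; indeed a single vector field $v$ with $P_{(27)}(\fra_v) \neq 0$ already produces the contradiction, and the standing hypothesis provides one at every point. We therefore conclude that no such parallel spinor can exist, which proves the corollary.

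There is essentially no computational content here, since all of the analytic work has been absorbed into Theorem~\ref{thm:master_formula}; the corollary is merely its contrapositive restatement. The only point demanding any care is the observation that the projection $P_{(27)}$ in the statement must be read as the one attached to the $\G^{\ast}_2$-structure determined by $\varepsilon$ itself, so that the necessary condition extracted from the theorem coincides verbatim with the failure of the hypothesis. I do not anticipate any genuine obstacle.
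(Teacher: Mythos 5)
Your proof is correct and takes essentially the same approach as the paper, which offers no separate argument at all but presents the corollary as an immediate consequence of Theorem \ref{thm:master_formula} --- precisely the contrapositive of the necessary condition $P_{(27)}(\fra_v)=0$ in \eqref{eq:algebraicphinonisotropic}. Your supplementary observations --- that the nowhere isotropic spinor determines the $\G^{\ast}_2$-structure with respect to which $P_{(27)}$ is defined, and that a single $v$ with $P_{(27)}(\fra_v)\neq 0$ already suffices for the contradiction --- accurately fill in what the paper leaves implicit.
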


\noindent
It is well-known that the metric induced by the three-form $\phi$ has holonomy contained in $\mathrm{G}_2^*$ if $\nabla\phi=0$ (see e.g.\ \cite[Proposition 2.11]{Fre13}). Theorem \ref{thm:master_formula} shows that this can happen for parallel spinors relative to non-trivial $\cA$ and gives an explicit formula in terms of the dequantization of the latter.  

\begin{cor}
Let $\varepsilon$ be a pseudo-unit parallel spinor on $(M,g)$ relative to $\fra\in \Omega^1(\Lambda^{<} T^{\ast} M)$. Then, the $\mathrm{G}_2^*$-structure $\phi$ associated to $\varepsilon$ has holonomy contained in $\mathrm{G}_2^*$ if $\fra_v^{(0)}+7\kappa_v^{(0)} = 0$ and $\fra_v^{(1)}+3\sigma_v^{(1)}+4l\kappa_v^{(1)} = 0$ for every $v\in\mathfrak{X}(M)$.
\end{cor}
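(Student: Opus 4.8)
The plan is to obtain the conclusion as a direct specialization of Theorem \ref{thm:master_formula}, whose differential system already isolates the two $\mathrm{G}_2^*$-irreducible channels through which $\phi$ can fail to be $\nabla$-parallel. First I would note that, since $\varepsilon$ is pseudo-unit, the function $f=\sqrt{-\escal{\phi,\phi}/7}$ is a nonzero constant, so that the coefficients
\begin{equation*}
\bar\fra_v^{(0)}=\fra_v^{(0)}+7f\kappa_v^{(0)},\qquad \bar\fra_v^{(1)}=\fra_v^{(1)}+3f\sigma_v^{(1)}+4lf\kappa_v^{(1)}
\end{equation*}
appearing in Theorem \ref{thm:master_formula} are well defined, and the two hypotheses of the statement are precisely the vanishing conditions $\bar\fra_v^{(0)}=0$ and $\bar\fra_v^{(1)}=0$ for every $v\in\fX(M)$.

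With this observation in hand, the second step is immediate. Theorem \ref{thm:master_formula} produces a nowhere-vanishing three-form $\phi$ satisfying
\begin{equation*}
\nabla_v\phi=2\,\bar\fra_v^{(0)}\,\phi+2l*(\bar\fra_v^{(1)}\wedge\phi),\qquad \forall\,\,v\in\fX(M),
\end{equation*}
so that both terms on the right-hand side vanish identically under the hypotheses, giving $\nabla\phi=0$. I would remark in passing that, since $\theta\mapsto *(\phi\wedge\theta)$ realizes the isomorphism $\Lambda^1V^*\cong\Lambda^3_7$, the displayed conditions are in fact also necessary for $\nabla\phi=0$, although only sufficiency is claimed here.

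The final step is to upgrade $\nabla\phi=0$ to a holonomy reduction. Because $\varepsilon$ is pseudo-unit, $\escal{\phi,\phi}$ is constant, so by Theorem \ref{thm:stabilizerG2*} the metric induced by $\phi$ is a fixed constant multiple of $g$ and in particular shares its Levi-Civita connection; thus the parallelism $\nabla\phi=0$ is exactly the hypothesis of \cite[Proposition 2.11]{Fre13}. Equivalently, and more intrinsically, $\phi$ satisfies the algebraic characterization \eqref{eq:intrinsicG2} with $\escal{\phi,\phi}<0$, so by Theorem \ref{thm:stabilizerG2*} its pointwise stabilizer in $\SO_0(4,3)$ is $\mathrm{G}_2^*$; since parallel transport preserves both $g$ and the parallel form $\phi$, the restricted holonomy group is contained in this stabilizer, i.e.\ $\Hol^0(g)\subseteq\mathrm{G}_2^*$.

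The argument is a bookkeeping specialization with no analytic difficulty, so the only point demanding genuine care is the normalization. I expect the main obstacle to be verifying that the constant value of $f$ attached to a pseudo-unit spinor, together with the factors $7f$, $3f$ and $4lf$ in $\bar\fra_v^{(0)}$ and $\bar\fra_v^{(1)}$, reproduces exactly the numerical coefficients $7$, $3$ and $4l$ written in the hypotheses; this is where a stray constant is most likely to creep in, and where I would cross-check against the pseudo-unit square $\alpha=\tfrac{\kappa}{8}+\phi$ computed earlier.
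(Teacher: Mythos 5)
Your proof is correct and follows essentially the same route as the paper, which treats the corollary as an immediate specialization of Theorem \ref{thm:master_formula}: the hypotheses annihilate both terms on the right-hand side of the evolution equation for $\phi$, giving $\nabla\phi=0$, after which the holonomy reduction follows from the cited result of \cite{Fre13} (or, equivalently, your stabilizer argument via Theorem \ref{thm:stabilizerG2*}) exactly as you argue. Your closing worry about normalization is well founded: for a pseudo-unit spinor one has $f=\tfrac{1}{8}$, so the exact vanishing conditions $\bar\fra_v^{(0)}=0$ and $\bar\fra_v^{(1)}=0$ read $\fra_v^{(0)}+7f\kappa_v^{(0)}=0$ and $\fra_v^{(1)}+3f\sigma_v^{(1)}+4lf\kappa_v^{(1)}=0$, and the constant $f$ has been silently absorbed in the paper's statement of the corollary rather than in your argument, which is the sound one.
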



\subsection{Isotropic parallel spinors}
\label{subsec:null_diff_spinors}


It follows from Theorem \ref{thm:null_spinor} that a nowhere vanishing exterior form $\alpha\in \Gamma(\Lambda^{<} T^{\ast}M)$ is the square of an isotropic spinor if and only if there exists a good open cover $\left\{ U_a \right\}_{a\in I}$ and an associated family of orthogonal and isotropic triples of one-forms $\{\vartheta_a,\theta_a,\eta_a\}_{a\in I}$ such that:
\begin{equation*}
\alpha|_{U_a}=\vartheta_a\wedge\theta_a\wedge\eta_a.
\end{equation*}

\noindent
This motivates the following definition.

\begin{definition}
\label{def:isotropicfamily}
An \emph{isotropic family of triples of one-forms} on a good open cover $\left\{ U_a \right\}_{a\in I}$ is a family of nowhere vanishing orthogonal and isotropic triples of one-forms $\{\vartheta_a,\theta_a,\eta_a\}_{a\in I}$ with $\vartheta_a,\theta_a,\eta_a\in\Omega^1(U_a)$. Such family is \emph{coherent} if:
\begin{equation*}
    (\vartheta_a\wedge\theta_a\wedge\eta_a)\vert_{U_{ab}} = (\vartheta_b\wedge\theta_b\wedge\eta_b)\vert_{U_{ab}}
\end{equation*}

\noindent
on every non-trivial overlap $U_{ab} := U_a\cap U_b$.
\end{definition}

\noindent
Clearly, a coherent family of triples of one-forms always gives rise to a well-defined global three-form on $M$, which we will denote generically by $\phi\in\Omega^3(M)$. Hence, as a consequence of Theorem \ref{thm:null_spinor}, the existence of isotropic spinors can be reformulated in the following terms.

\begin{cor}
A strongly spin pseudo-Riemannian manifold $(M,g)$ of signature $(4,3)$ admits a nowhere vanishing isotropic spinor if and only if it admits an isotropic and coherent family of triples of one-forms.
\end{cor}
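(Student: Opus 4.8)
The plan is to prove the corollary as a direct consequence of Theorem \ref{thm:null_spinor} together with Definition \ref{def:isotropicfamily}, by patching together local data on a good open cover. The statement asserts an equivalence, so I would prove each implication in turn, with the bulk of the argument residing in the forward direction (existence of a spinor implies existence of the coherent family).

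For the forward direction, suppose $(M,g)$ admits a nowhere-vanishing isotropic spinor $\varepsilon \in \Gamma(S)$. Applying the pointwise spinor square map, I obtain a nowhere-vanishing exterior form $\alpha = \cE^{+}_{\Gamma}(\varepsilon) \in \Gamma(\Lambda^{<}T^*M)$. At each point $m \in M$, Theorem \ref{thm:null_spinor} guarantees that $\alpha_m = \vartheta_m \wedge \theta_m \wedge \eta_m$ is a decomposable three-form built from orthogonal isotropic one-forms; in particular $\alpha \in \Omega^3(M)$ is a global three-form. The key step is to \emph{locally} extract a smooth isotropic triple realizing this decomposition: on a good open cover $\{U_a\}_{a\in I}$, chosen so that each $U_a$ is contractible and small enough to admit a smooth orthonormal coframe, I would construct smooth one-forms $\vartheta_a, \theta_a, \eta_a \in \Omega^1(U_a)$ with $\alpha|_{U_a} = \vartheta_a \wedge \theta_a \wedge \eta_a$, each one isotropic and the three mutually orthogonal. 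Since $\alpha|_{U_a}$ is a smooth, nowhere-vanishing, decomposable three-form whose defining subspace at each point is totally isotropic, a smooth local section of the associated frame bundle provides such a triple. By construction, on every overlap $U_{ab}$ the two triples reconstruct the same global form $\alpha$, so $(\vartheta_a \wedge \theta_a \wedge \eta_a)|_{U_{ab}} = \alpha|_{U_{ab}} = (\vartheta_b \wedge \theta_b \wedge \eta_b)|_{U_{ab}}$, which is exactly the coherence condition of Definition \ref{def:isotropicfamily}.

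For the converse, suppose $(M,g)$ admits an isotropic and coherent family of triples $\{\vartheta_a, \theta_a, \eta_a\}_{a\in I}$. Coherence means the local three-forms glue to a well-defined global $\phi \in \Omega^3(M)$, with $\phi|_{U_a} = \vartheta_a \wedge \theta_a \wedge \eta_a$ nowhere vanishing. Pointwise, Theorem \ref{thm:null_spinor} identifies each $\phi_m$ as the square of some isotropic spinor, so $\phi \in \Im(\cE^+_\Gamma)$ fiberwise. Using that $(M,g)$ is strongly spin to fix an irreducible paired spinor bundle $(S,\Gamma,\Scr B)$, and invoking the fact that the spinor square map is equivariant and a local diffeomorphism onto its image away from the zero section, I would recover a smooth nowhere-vanishing isotropic spinor $\varepsilon \in \Gamma(S)$ with $\cE^+_\Gamma(\varepsilon) = \phi$.

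The main obstacle is the smooth local extraction in the forward direction: Theorem \ref{thm:null_spinor} is purely algebraic and pointwise, so promoting the pointwise decomposition of $\alpha$ into a \emph{smooth} isotropic triple requires care, since the individual one-forms $\vartheta_a, \theta_a, \eta_a$ are far from unique (they can be rescaled and mixed by the stabilizer computed in Proposition \ref{prop:stabilizer_isotropic}). The good open cover is used precisely to trivialize this ambiguity locally, where a smooth choice of frame always exists; the non-uniqueness is then absorbed into the flexibility of the family, and only the gluing of the products $\vartheta_a \wedge \theta_a \wedge \eta_a$ — not of the individual forms — is required by coherence. A secondary subtlety in the converse is ensuring the spinor lift is globally smooth, which follows from the smoothness and nowhere-vanishing of $\phi$ together with the local-diffeomorphism property of the square map on the relevant orbit.
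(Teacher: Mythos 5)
Your proposal is correct and follows essentially the same route as the paper, whose proof is precisely the combination of Theorem \ref{thm:null_spinor} with the standard fact that a smooth, nowhere-vanishing, pointwise decomposable form is \emph{locally} decomposable into smooth one-forms (isotropy and orthogonality of the local factors then being automatic, since they frame the totally isotropic subbundle determined by $\alpha$); you merely spell out the details the paper compresses into one line. The only point to phrase carefully is your converse: the square map is a two-to-one local covering onto its image rather than a local diffeomorphism, and the global existence of the lift $\varepsilon$ with $\cE^{+}_{\Gamma}(\varepsilon)=\phi$ is inherited from the global spinor--form correspondence of \cite{Sha24} (as in Proposition \ref{prop:generaldifferentialspinor}), exactly as the paper implicitly does.
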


\begin{proof}
The result follows from the standard fact that a smooth form is pointwise decomposable, as we know by Theorem \ref{thm:null_spinor} the square of an isotropic spinor is, if and only if is locally decomposable in terms of local smooth one-forms.
\end{proof}

\noindent
Every coherent family defines an orientable vector subbundle $W\subset T^{\ast}M$ with volume form induced by $\phi$. This rank-three vector bundle $W$ is topologically trivial if and only if $\phi$ can be globally written as $\phi = \vartheta\wedge\theta\wedge\eta$ in terms of three isotropic and mutually orthogonal one-forms $\vartheta , \theta , \eta \in \Omega^1(M)$.  

\begin{thm}\label{thm:null_diff_spinor}
Let $(M,g)$ be a strongly spin pseudo-Riemannian manifold of signature $(4,3)$. Then it admits an isotropic parallel spinor of type $(\Sigma,\gamma_l)$ relative to $\fra\in \Gamma(\Lambda^{<} T^{\ast}M)$ if and only if it admits an isotropic and coherent family $\{\vartheta_a,\theta_a,\eta_a\}$ of triples of one-forms relative to some good open cover $\left\{ U_a \right\}_{a\in I}$ and such that 
\begin{align*}
        \escal{\fra_v^{(3)},\vartheta_a\wedge\theta_a\wedge\eta_a}&=0,\\
        \nabla_v(\vartheta_a\wedge\theta_a\wedge\eta_a)&=2\fra_v^{(0)}\vartheta_a\wedge\theta_a\wedge\eta_a+2l*(\fra_v^{(1)}\wedge\vartheta_a\wedge\theta_a\wedge\eta_a)\\
        &\quad-2(\fra_v^{(2)}\triangle_1\vartheta_a)\wedge\theta_a\wedge\eta_a+2(\fra_v^{(2)}\triangle_1\theta_a)\wedge\vartheta_a\wedge\eta_a-2(\fra_v^{(2)}\triangle_1\eta_a)\wedge\vartheta_a\wedge\theta_a\\
        &\quad+2l*((\fra_v^{(3)}\triangle_1\vartheta_a)\wedge\theta_a\wedge\eta_a-(\fra_v^{(3)}\triangle_1\theta_a)\wedge\vartheta_a\wedge\eta_a+(\fra_v^{(3)}\triangle_1\eta_a)\wedge\vartheta_a\wedge\theta_a)
    \end{align*} 
    for all $v\in\fX(M)$.
\end{thm}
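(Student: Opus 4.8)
The plan is to specialize the master system of Proposition \ref{prop:intermediate_formula} to the isotropic case and then rewrite its degree-three component in terms of a local decomposition of the spinor square furnished by Theorem \ref{thm:null_spinor}. The starting observation is that an isotropic spinor has square equal to a pure three-form: by Theorem \ref{thm:null_spinor} its square is locally $\phi=\vartheta_a\wedge\theta_a\wedge\eta_a$, so the scalar part $f=\alpha^{(0)}$ vanishes identically. (Equivalently, $\cB(\varepsilon,\varepsilon)=0$ forces $\escal{\phi,\phi}=0$ through the norm identity $\cB(\varepsilon,\varepsilon)^2=-\tfrac{64}{7}\escal{\phi,\phi}$, whence $f=0$ by Proposition \ref{prop:algebraic_characterization}.) Setting $f=0$ in \eqref{eq:generalsystem1} collapses that equation to the single algebraic constraint $\escal{\fra_v^{(3)},\phi}=0$, which on $U_a$ reads $\escal{\fra_v^{(3)},\vartheta_a\wedge\theta_a\wedge\eta_a}=0$, while \eqref{eq:generalsystem2} becomes
\[
\nabla_v\phi=2\fra_v^{(0)}\phi+2l*(\fra_v^{(1)}\wedge\phi)-2\fra_v^{(2)}\triangle_1\phi+2l*(\fra_v^{(3)}\triangle_1\phi).
\]

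The main computational step is a Leibniz-type expansion of $\triangle_1$ against the decomposable three-form $\phi=\vartheta_a\wedge\theta_a\wedge\eta_a$. From the definition $\omega\triangle_1\beta=h^{ij}(\iota_{e_i}\omega)\wedge(\iota_{e_j}\beta)$ and the fact that $\iota_X$ is an anti-derivation, I would first record that for any one-form $\beta$ one has $\omega\triangle_1\beta=h^{ij}(\iota_{e_i}\omega)\,\beta(e_j)=\iota_{\beta^\sharp}\omega$, and that
\[
\iota_{e_j}\phi=\vartheta_a(e_j)\,\theta_a\wedge\eta_a-\theta_a(e_j)\,\vartheta_a\wedge\eta_a+\eta_a(e_j)\,\vartheta_a\wedge\theta_a.
\]
Contracting this identity with $h^{ij}\iota_{e_i}\omega$ and recognizing each coefficient as $\omega\triangle_1(\cdot)$ yields
\[
\omega\triangle_1\phi=(\omega\triangle_1\vartheta_a)\wedge\theta_a\wedge\eta_a-(\omega\triangle_1\theta_a)\wedge\vartheta_a\wedge\eta_a+(\omega\triangle_1\eta_a)\wedge\vartheta_a\wedge\theta_a
\]
for arbitrary $\omega$. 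Substituting $\omega=\fra_v^{(2)}$ and $\omega=\fra_v^{(3)}$ into the displayed equation for $\nabla_v\phi$ reproduces exactly the differential system in the statement; the alternating signs $-,+,-$ in front of the $\fra_v^{(2)}$-terms are precisely the signs of this expansion carried through the overall factor $-2$.

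Finally, both implications follow from the biconditional of Proposition \ref{prop:intermediate_formula} together with the corollary preceding the theorem: a coherent isotropic family assembles into a global three-form $\phi$ that is pointwise the square of an isotropic spinor by Theorem \ref{thm:null_spinor}, hence lies in $\Im(\cE^+_\Gamma)$, and the per-$U_a$ equations are simply the restrictions to each chart of the global system \eqref{eq:generalsystem1} and \eqref{eq:generalsystem2} with $f=0$, since their right-hand sides reassemble into the chart-independent quantities $\escal{\fra_v^{(3)},\phi}$, $\fra_v^{(2)}\triangle_1\phi$, and $\fra_v^{(3)}\triangle_1\phi$. I expect the only genuinely delicate point to be the sign bookkeeping in the Leibniz identity for $\triangle_1$ — keeping the anti-derivation signs and the musical raisings $\vartheta_a^\sharp,\theta_a^\sharp,\eta_a^\sharp$ consistent — whereas the vanishing of $f$, the reduction of \eqref{eq:generalsystem1}, and the local-to-global patching are routine.
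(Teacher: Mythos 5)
Your proposal is correct and follows essentially the same route as the paper: the published proof likewise applies Proposition \ref{prop:intermediate_formula} with the scalar part suppressed and establishes the Leibniz-type expansion of $\fra_v^{(2)}\triangle_1(\vartheta_a\wedge\theta_a\wedge\eta_a)$ and $\fra_v^{(3)}\triangle_1(\vartheta_a\wedge\theta_a\wedge\eta_a)$ via the same contraction identity $\iota_{e_i}(\vartheta_a\wedge\theta_a\wedge\eta_a)=\vartheta_a(e_i)\,\theta_a\wedge\eta_a-\theta_a(e_i)\,\vartheta_a\wedge\eta_a+\eta_a(e_i)\,\vartheta_a\wedge\theta_a$, with identical sign bookkeeping. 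The only difference is presentational: you spell out the vanishing of $f$ and the reduction of \eqref{eq:generalsystem1} to $\escal{\fra_v^{(3)},\phi}=0$, which the paper leaves implicit in its appeal to Theorem \ref{thm:null_spinor}.
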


\begin{proof}
    This is an application of Proposition \ref{prop:intermediate_formula}. It only remains to prove the new expressions for $\fra_v^{(2)}\triangle_1(\vartheta_a\wedge\theta_a\wedge\eta_a)$ and $\fra_v^{(3)}\triangle_1(\vartheta_a\wedge\theta_a\wedge\eta_a)$. First we compute $$\iota_{e_i}(\vartheta_a\wedge\theta_a\wedge\eta_a)=\vartheta_a(e_i)\theta_a\wedge\eta_a-\theta_a(e_i)\vartheta_a\wedge\eta_a+\eta_a(e_i)\vartheta_a\wedge\theta_a.$$ Then we have \begin{align*}
        \fra_v^{(2)}\triangle_1(\vartheta_a\wedge\theta_a\wedge\eta_a)&=\iota_{e_i}\fra_v^{(2)}\wedge\iota_{e_i}(\vartheta_a\wedge\theta_a\wedge\eta_a)\\
        &=(\fra_v^{(2)}(e_i)\vartheta_a(e_i))\wedge\theta_a\wedge\eta_a-(\fra_v^{(2)}(e_i)\theta_a(e_i))\wedge\vartheta_a\wedge\eta_a+(\fra_v^{(2)}(e_i)\eta_a(e_i))\wedge\vartheta_a\wedge\theta_a\\
        &=(\fra_v^{(2)}\triangle_1\vartheta_a)\wedge\theta_a\wedge\eta_a-(\fra_v^{(2)}\triangle_1\theta_a)\wedge\vartheta_a\wedge\eta_a+(\fra_v^{(2)}\triangle_1\eta_a)\wedge\vartheta_a\wedge\theta_a.
    \end{align*} To obtain $\fra_v^{(3)}\triangle_1(\vartheta_a\wedge\theta_a\wedge\eta_a)$ we proceed in an analogous way and hence we conclude.
\end{proof}

\noindent
As an application, we apply the previous general result to spinors parallel under a metric connection $\nabla^{g,A}$ with \emph{contorsion} $A\in\Gamma(T^{\ast}M\otimes \Lambda^2 T^{\ast}M)$. Such a connection can always be written as follows by definition:
\begin{equation*}
\nabla^{g,A} = \nabla^g +  A
\end{equation*}

\noindent
in terms of the Levi-Civita connection $\nabla^g$ of the underlying metric $g$ and the \emph{contorsion tensor} $A\in\Gamma(T^{\ast}M\otimes \Lambda^2 T^{\ast}M)$ of the connection. Since $(M,g)$ is assumed to be strongly spin and $\nabla^{g,A}$ is metric, it lifts to any irreducible paired spinor bundle $(S,\Gamma,\cB)$ on $(M,g)$, which we denote by the same symbol for ease of notation. Spinors $\varepsilon\in \Gamma(S)$ satisfying:
\begin{equation*}
\nabla^{g,A}\varepsilon = 0 \quad \Leftrightarrow \quad \nabla^g_v \varepsilon  = - \Psi^{<}_{\Gamma}(A_v) (\varepsilon)\, , \quad \forall \,\, v\in \mathfrak{X}(M)
\end{equation*}

\noindent
are called \emph{torsion parallel spinors} and define a particular class of parallel spinors on $(M,g)$ relative to data $\fra\in \Omega^1(\Lambda^<T^*M)$ satisfying $\fra^{(0)}=\fra^{(1)}=\fra^{(3)}=0$.

\begin{cor}
\label{cor:spinor_with_torsion}
A strongly spin pseudo-Riemannian manifold $(M,g)$ of signature $(4,3)$ admits a torsion parallel spinor with contorsion $A\in\Gamma(T^{\ast}M\otimes \Lambda^2 T^{\ast}M)$ if and only if it admits an isotropic and coherent family of triples of one-forms $\{\vartheta_a,\theta_a,\eta_a\}$ preserved by $\nabla^{g,A}$.
\end{cor}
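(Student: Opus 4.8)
The plan is to deduce the statement as the specialization of Theorem~\ref{thm:null_diff_spinor} to the torsion case. Recall from the discussion preceding the statement that a torsion parallel spinor is exactly a parallel spinor relative to a dequantization $\fra\in\Omega^1(\Lambda^<T^*M)$ whose only nonvanishing component is the degree-two part $\fra^{(2)}$, which is determined by the contorsion tensor $A$ through the spin lift of $\mathfrak{so}(TM)\cong\Lambda^2T^*M$ to $\End(S)$. I would first record the precise relation, namely $\fra_v^{(2)}=-\tfrac12 A_v$ for every $v\in\fX(M)$, the factor $\tfrac12$ being the normalization of the Clifford embedding of $\mathfrak{so}(TM)$ into the spinor representation. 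With $\fra^{(0)}=\fra^{(1)}=\fra^{(3)}=0$, the first equation of Theorem~\ref{thm:null_diff_spinor}, $\escal{\fra_v^{(3)},\vartheta_a\wedge\theta_a\wedge\eta_a}=0$, holds automatically, so the existence of a torsion parallel spinor is equivalent, by that theorem together with Definition~\ref{def:isotropicfamily}, to the existence of an isotropic and coherent family $\{\vartheta_a,\theta_a,\eta_a\}$ satisfying the single surviving differential constraint.

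The second step is to identify that surviving constraint with the condition that the family be parallel under $\nabla^{g,A}$. Setting $\fra^{(0)}=\fra^{(1)}=\fra^{(3)}=0$ in the differential system of Theorem~\ref{thm:null_diff_spinor} leaves
\begin{equation*}
\nabla_v(\vartheta_a\wedge\theta_a\wedge\eta_a)=-2(\fra_v^{(2)}\triangle_1\vartheta_a)\wedge\theta_a\wedge\eta_a+2(\fra_v^{(2)}\triangle_1\theta_a)\wedge\vartheta_a\wedge\eta_a-2(\fra_v^{(2)}\triangle_1\eta_a)\wedge\vartheta_a\wedge\theta_a,
\end{equation*}
where $\nabla$ is the Levi-Civita connection acting on forms. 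The crucial algebraic input is the identity $\omega\triangle_1\beta=-\,\omega\cdot\beta$, valid for a two-form $\omega$ and a one-form $\beta$, where $\omega\cdot$ denotes the natural derivation action on exterior forms of the skew-symmetric endomorphism associated to $\omega$ by the metric; this is the degree-one counterpart of the computation carried out in the proof of Lemma~\ref{lemma:aux-triangle1}(2) and is checked by a one-line index calculation. Since $\fra_v^{(2)}\triangle_1$ acts as a derivation on $(\Lambda T^*M,\wedge)$, the right-hand side above is precisely $-2\,\fra_v^{(2)}\triangle_1(\vartheta_a\wedge\theta_a\wedge\eta_a)$ with the three wedge factors reordered.

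Combining the two steps, substitution of $\fra_v^{(2)}=-\tfrac12 A_v$ gives
\begin{equation*}
\nabla_v(\vartheta_a\wedge\theta_a\wedge\eta_a)=A_v\triangle_1(\vartheta_a\wedge\theta_a\wedge\eta_a)=-\,A_v\cdot(\vartheta_a\wedge\theta_a\wedge\eta_a),
\end{equation*}
so that $\nabla^{g,A}_v(\vartheta_a\wedge\theta_a\wedge\eta_a)=\nabla_v(\vartheta_a\wedge\theta_a\wedge\eta_a)+A_v\cdot(\vartheta_a\wedge\theta_a\wedge\eta_a)=0$, because $\nabla^{g,A}=\nabla+A\cdot$ acts on forms as a derivation. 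Hence the surviving differential constraint of Theorem~\ref{thm:null_diff_spinor} is equivalent to $\nabla^{g,A}(\vartheta_a\wedge\theta_a\wedge\eta_a)=0$ for all $a$, which is the assertion that the coherent family is preserved by $\nabla^{g,A}$; note that since $\nabla^{g,A}$ is metric it automatically preserves the isotropy and mutual orthogonality of the triple, so preserving the decomposable three-forms is the same as preserving the family. This yields the desired equivalence.

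I expect the main obstacle to be the normalization bookkeeping in the first two steps: fixing simultaneously the constant $-\tfrac12$ relating $\fra^{(2)}$ to $A$ (the spin-representation factor entering the lift of $\nabla^{g,A}$ to $S$) and the sign in $\omega\triangle_1\beta=-\omega\cdot\beta$, in such a way that the factor $2$ produced by the master formula of Theorem~\ref{thm:null_diff_spinor} is exactly cancelled and the Levi-Civita derivative is turned into $-A_v\cdot$. Once these conventions are pinned down consistently, the argument is a purely formal rewriting using only the derivation property of the $\mathfrak{so}(TM)$-action, with no further computation required.
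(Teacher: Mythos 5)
Your proposal is correct in substance and starts exactly where the paper does --- specializing Theorem \ref{thm:null_diff_spinor} to $\fra^{(0)}=\fra^{(1)}=\fra^{(3)}=0$, so that the pairing condition $\escal{\fra_v^{(3)},\vartheta_a\wedge\theta_a\wedge\eta_a}=0$ is vacuous --- but you process the surviving equation differently. The paper wedges Equation \eqref{eq:nabla_null_form} with each factor and uses $0=\nabla^g_v(\alpha_a\wedge\vartheta_a)$ to extract, one-form by one-form, that $\nabla^{g,A}_v\vartheta_a=\zeta^{\vartheta_a}_v\in\Span\{\vartheta_a,\theta_a,\eta_a\}$ after setting $A:=2\fra^{(2)}$; ``preserved'' in the statement thus means, per the paper's proof, that the rank-three subbundle spanned by the triple is $\nabla^{g,A}$-invariant. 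You instead observe that $\fra^{(2)}_v\triangle_1$ is a derivation of the exterior algebra, so the right-hand side collapses to $-2\,\fra^{(2)}_v\triangle_1\alpha_a$ (your reordering signs check out), and since $A_v\triangle_1\beta=\iota_{\beta^\sharp}A_v$ is exactly how the contorsion acts on one-forms, the surviving equation becomes literally $\nabla^{g,A}_v\alpha_a=0$. This is a slicker identification yielding a cleaner intermediate statement (the local decomposable three-forms, equivalently the global spinorial three-form, are $\nabla^{g,A}$-parallel), whereas the paper's wedge trick yields finer per-one-form information. On conventions: your $\fra^{(2)}_v=-\tfrac12 A_v$ disagrees with the paper's explicit normalization $A:=2\fra^{(2)}$, and your identity $\omega\triangle_1\beta=-\omega\cdot\beta$ carries the opposite sign to the paper's own computation in Lemma \ref{lemma:aux-triangle1}(2), where $\omega\triangle_1\phi=+\omega\cdot\phi$ in its convention; your two flips cancel, so your cancellation is internally consistent, but the conventions should be pinned to the paper's (which, as you correctly anticipated, fixes them essentially by fiat in this proof).

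The one genuine soft spot is your closing claim that ``preserving the decomposable three-forms is the same as preserving the family.'' If ``preserved'' is read as the paper's proof reads it --- $\nabla^{g,A}$ maps each of $\vartheta_a,\theta_a,\eta_a$ into $\Span\{\vartheta_a,\theta_a,\eta_a\}$ --- then span-invariance only yields $\nabla^{g,A}_v\alpha_a=\lambda(v)\,\alpha_a$, where $\lambda$ is the trace one-form of the induced connection on the isotropic subbundle, and metricity does \emph{not} force $\lambda=0$: the metric vanishes identically on the span, so no constraint on the trace arises (compare a metric connection in signature $(1,1)$ preserving an isotropic line with nonzero recurrence one-form). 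Hence $\nabla^{g,A}\alpha_a=0$ is strictly stronger than span-preservation, and your asserted equivalence conflates the two; in the converse direction of the corollary this is where your argument, as written, gaps. To be fair, the paper's own proof establishes only the forward implication explicitly and leaves the converse at ``thus we conclude,'' facing the mirror image of the same trace issue; if ``preserved'' is instead interpreted as $\nabla^{g,A}\alpha_a=0$, your chain consists of honest equivalences throughout and is arguably the tighter proof of the statement.
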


\begin{remark}
We can think of this corollary as an extension of \cite[Proposition 8.7]{KathHabilitation} in signature $(4,3)$ to the case of a metric connection with torsion, without the need to impose the underlying manifold to be simply connected.
\end{remark}

\begin{proof}
Applying Theorem \ref{thm:null_diff_spinor} to the case where $\fra^{(0)}=\fra^{(1)}=\fra^{(3)}=0$ we obtain: 
\begin{equation}
\label{eq:nabla_null_form}
\nabla^g_v(\vartheta_a\wedge\theta_a\wedge\eta_a)=-2(\fra_v^{(2)}\triangle_1\vartheta_a)\wedge\theta_a\wedge\eta_a+2(\fra_v^{(2)}\triangle_1\theta_a)\wedge\vartheta_a\wedge\eta_a-2(\fra_v^{(2)}\triangle_1\eta_a)\wedge\vartheta_a\wedge\theta_a.
\end{equation}

\noindent
Let $\alpha_a:=\vartheta_a\wedge\theta_a\wedge\eta_a$. Note that $0=\nabla^g_v(\alpha_a\wedge\vartheta_a)=(\nabla^g_v\alpha_a)\wedge\vartheta_a+\alpha_a\wedge(\nabla^g_v\vartheta_a)$. Multiplying both sides of \eqref{eq:nabla_null_form} by $\vartheta_a$ we obtain $(\nabla^g_v\alpha_a)\wedge\vartheta_a=-2(\fra_v^{(2)}\triangle_1\vartheta_a)\wedge\alpha_a=-2\fra_v^{(2)}(\vartheta_a^\sharp)\wedge\alpha_a$. Hence:
\begin{equation*}
    \nabla^g_v\vartheta_a=-2\fra_v^{(2)}(\vartheta_a^\sharp)+\zeta_v^{\vartheta_a},
\end{equation*}

\noindent
where $\zeta_v^{\vartheta_a}$ is a one-form such that $\zeta_v^{\vartheta_a}\wedge\alpha_a=0$ for all $v\in\mathfrak{X}(M)$, thus $\zeta_v^{\vartheta_a}$ is in the span of $\{\vartheta_a,\theta_a,\eta_a\}$. Set $A:=2\fra^{(2)}$. Then:
\begin{equation*}
\nabla^{g,A}_v\vartheta_a=\nabla^g_v\vartheta_a+A_v(\vartheta_a^\sharp)=\zeta_v^{\vartheta_a}\in\Span\{\vartheta_a,\theta_a,\eta_a\}.
\end{equation*}

\noindent
Similarly, we get: 
\begin{align*}
\nabla^{g,A}_v\theta_a &=\nabla^g_v\theta_a+A_v(\theta_a^\sharp)=\zeta_v^{\theta_a}\in\Span\{\vartheta_a,\theta_a,\eta_a\},\\
\nabla^{g,A}_v\eta_a &=\nabla^g_v\eta_a+A_v(\eta_a^\sharp)=\zeta_v^{\eta_a}\in\Span\{\vartheta_a,\theta_a,\eta_a\}
\end{align*}

\noindent
and thus we conclude.
\end{proof}

\noindent
Note that the contorsion tensor $A\in\Gamma(T^{\ast}M\otimes \Lambda^2 T^{\ast}M)$ of a metric connection $\nabla^{g,A}$ is related to its torsion $T\in \Gamma(\Lambda^2 T^{\ast}M \otimes T^*M)$ by the formula $T(v_1,v_2)=A(v_1,v_2)-A(v_2,v_1)$, where $v_1,v_2 \in \mathfrak{X}(M)$. If $\nabla^{g,A}$ has totally skew-symmetric torsion, i.e.\ $T\in\Omega^3(M)$, then $2A=  T$. As a particular case of the previous corollary, we can take $\nabla^{g,A}$ to be the Levi-Civita connection by setting $A=0$. In this case, we have a \emph{degenerate} analog of quaternionic Kähler geometry, but instead of having a bundle of non-degenerate local two-forms, we have a bundle of isotropic and orthogonal local one-forms.


\subsection{Metrics admitting isotropic parallel spinors}
\label{subsec:examples}


Corollary \ref{cor:spinor_with_torsion} can be immediately applied to construct explicit examples of metrics $g$ of signature $(4,3)$ on $\mathbb{R}^7$ admitting irreducible isotropic real spinors parallel with respect to a metric connection with torsion. Consider $\mathbb{R}^7$ with standard coordinates $(x_1,y_1 , x_2, y_2 , x_3, y_3 ,z)$. Inspired by the structure of three-dimensional Lorentzian manifolds admitting a special isotropic vector field, more specifically the so-called Kundt three-manifolds \cite{BMZ22}, we propose the following ansatz for a metric of signature $(4,3)$ on $\mathbb{R}^7$:
\begin{equation}
\label{eq:metricexample}
g = \sum_{i=1}^3 ( H_i \dd x_i \otimes \dd x_i + e^{K_i}\dd x_i \odot \dd y_i) + e^{F}\dd z\otimes \dd z
\end{equation}

\noindent
for arbitrary functions $H_1 , H_2 , H_3, K_1 , K_2 , K_3, F \in\mathcal{C}^{\infty}(\mathbb{R}^7)$. Roughly speaking, each factor of the form $H_i \dd x_i \otimes \dd x_i + e^{K_i}\dd x_i \odot \dd y_i$ occurs naturally in the local form of a Lorentzian metric admitting a parallel vector field of the form $\partial_{y_i}$ and gives a contribution of signature $(1,1)$ to the Lorentzian metric, which thus has a positive definite \emph{transverse} part. Hence, summing three of these factors plus a \emph{transverse} element of the form $e^{F}\dd z\otimes \dd z$ we obtain a metric of signature $(4,3)$ which is naturally equipped with three coordinate vector fields which are all isotropic and mutually orthogonal. Metric \eqref{eq:metricexample} also provides a natural generalization of the most general local metric admitting a parallel (under Levi-Civita) isotropic real spinor, which was obtained in \cite[Theorem 8.2]{KathHabilitation}, see also \cite{Bryant00}. As candidates for the metric dual of the triple of one-forms as described in Corollary \ref{cor:spinor_with_torsion}, we set:
\begin{equation*}
\vartheta^{\sharp_g} := \partial_{y_1} \, , \qquad \theta^{\sharp_g} :=  \partial_{y_2}\, , \qquad \eta^{\sharp_g} :=\partial_{y_3}
\end{equation*}

\noindent
and thus:
\begin{equation}
\label{eq:tripleexamples}
\vartheta = e^{K_1} \dd x_1 \, , \qquad \theta = e^{K_2} \dd x_2 \, , \qquad \eta = e^{K_3} \dd x_3.
\end{equation}

\noindent
Let $\nabla^{g,A}$ be a metric connection with contorsion $A$, and let $\varepsilon\colon \mathbb{R}^7 \to \mathbb{R}^8$ be a spinor whose associated exterior form is $\phi = \vartheta\wedge\theta\wedge \eta$ as described above. Then, by Corollary \ref{cor:spinor_with_torsion} it follows that $\nabla^{g,A}\varepsilon = 0$ if and only if:
\begin{eqnarray}
\label{eq:involutivecondition}
\nabla^{g,A}_w \dd x_i \in \Span_{\mathcal{C}^{\infty}(\mathbb{R}^7)}\{\dd x_1, \dd x_2, \dd x_3\}\, , \qquad i=1,2,3
\end{eqnarray}

\noindent
for every $w\in \mathfrak{X}(\mathbb{R}^7)$. We have:
\begin{equation*}
(\nabla^{g,A}_{w_1} \dd x_i)(w_2) = (\nabla^{g}_{w_1} \dd x_i)(w_2) + e^{-K_i} A(w_1 , \partial_{y_i}, w_2)
\end{equation*}

\noindent
and therefore this equation is equivalent to:
\begin{equation}
\label{eq:existencespinorexample}
(\nabla^{g}_{w_1} \dd x_i)(\partial_{z}) + e^{-K_i} A(w_1 , \partial_{y_i}, \partial_{z}) = 0\, , \quad (\nabla^{g}_{w_1} \dd x_i)(\partial_{y_j}) + e^{-K_i} A(w_1 , \partial_{y_i}, \partial_{y_j}) = 0\, ,  \quad i,j = 1,2,3.
\end{equation}

\noindent
A computation gives:
\begin{align*}
 2 \nabla^g \dd x_i &= e^{K_i} \frac{\partial H_i}{\partial y_i} \dd x_i \otimes \dd x_i -   \sum_j \frac{\partial K_i}{\partial x_j} \dd x_i \odot \dd x_j  -   \sum_{j\neq i} \frac{\partial K_i}{\partial y_j} \dd x_i \odot \dd y_j -  \frac{\partial K_i}{\partial z} \dd x_i \odot \dd z\\
&\quad + e^{-K_i} \sum_{j\neq i} \frac{\partial H_j}{\partial y_i} \dd x_j \otimes \dd x_j  + e^{-K_i} \sum_{j\neq i} \frac{\partial e^{K_j}}{\partial y_i} \dd x_j \odot \dd y_j  + e^{-K_i} \frac{\partial e^{F}}{\partial y_i} \dd z \otimes \dd z
\end{align*}

\noindent
for every fixed $i = 1,2,3$ (no summation is understood for the repeated index $i$). Hence:
\begin{equation*}
2 (\nabla^g \dd x_i)(\partial_{y_j}) = (1-\delta_{ij}) ( e^{-K_i} \frac{\partial e^{K_j}}{\partial y_i} \dd x_j - \frac{\partial K_i}{\partial y_j} \dd x_i)\, , \quad   2 (\nabla^g \dd x_i)(\partial_z) =   -  \frac{\partial K_i}{\partial z} \dd x_i + e^{-K_i} \frac{\partial e^{F}}{\partial y_i} \dd z.
\end{equation*}

\noindent
Comparing with Equation \eqref{eq:existencespinorexample}, we conclude that $\mathbb{R}^7$ equipped with the metric \eqref{eq:metricexample} admits a parallel irreducible isotropic spinor with associated triple of one-forms  \eqref{eq:tripleexamples} if and only if:
\begin{eqnarray*}
& -  \frac{\partial K_i}{\partial z} \dd x_i + e^{-K_i} \frac{\partial e^{F}}{\partial y_i} \dd z + \frac{1}{2} e^{-K_i} A(- , \partial_{y_i}, \partial_{z}) = 0,\\
& (1-\delta_{ij}) ( e^{-K_i} \frac{\partial e^{K_j}}{\partial y_i} \dd x_j - \frac{\partial K_i}{\partial y_j} \dd x_i) + \frac{1}{2} e^{-K_i} A(- , \partial_{y_i}, \partial_{y_j}) = 0 
\end{eqnarray*}

\noindent
for every $i,j = 1,2,3$. Equivalently:
\begin{eqnarray}
\label{eq:torsionexample}
\frac{1}{2} A(- , \partial_{y_i}, \partial_{z}) = \frac{\partial e^{K_i}}{\partial z} \dd x_i -  \frac{\partial e^{F}}{\partial y_i} \dd z \, , \qquad \frac{1}{2}  A( - , \partial_{y_i}, \partial_{y_j}) = \frac{\partial e^{K_i} }{\partial y_j} \dd x_i - \frac{\partial e^{K_j}}{\partial y_i} \dd x_j\, , \quad i\neq j.
\end{eqnarray}

\noindent
Rather than as differential equations for the coefficients of $g$, these conditions can be understood as determining certain components of $A$ to guarantee that there exists an irreducible isotropic spinor $\varepsilon \colon \mathbb{R}^7 \to \mathbb{R}^8$ such that $\nabla^{g,A}\varepsilon = 0$. Hence, we obtain the following result.

\begin{prop}
Let $\mathbb{R}^7$ equipped with the pseudo-Riemannian metric $g$ given in \eqref{eq:metricexample}. Then, an irreducible isotropic spinor parallel with respect to $\nabla^{g,A}$ exists if $A$ satisfies \eqref{eq:torsionexample}.
\end{prop}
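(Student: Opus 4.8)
The plan is to realize this as a direct application of Corollary \ref{cor:spinor_with_torsion}, which reduces the existence of a torsion parallel isotropic spinor to the existence of an isotropic and coherent family of triples of one-forms preserved by $\nabla^{g,A}$. Since the base is $\mathbb{R}^7$, which is contractible, it suffices to exhibit a single globally defined triple, so that coherence is automatic. First I would propose the explicit triple \eqref{eq:tripleexamples}, namely $\vartheta = e^{\cK_1}\dd x_1$, $\theta = e^{\cK_2}\dd x_2$, $\eta = e^{\cK_3}\dd x_3$, and verify that it is isotropic and mutually orthogonal. This is immediate from the block structure of \eqref{eq:metricexample}: the term $e^{\cK_i}\dd x_i\odot\dd y_i$ forces $g(\partial_{y_i},\partial_{y_j})=0$ and $(\partial_{y_i})^{\flat}=e^{\cK_i}\dd x_i$, so that the duals $\vartheta^{\sharp}=\partial_{y_1}$, $\theta^{\sharp}=\partial_{y_2}$, $\eta^{\sharp}=\partial_{y_3}$ are null and pairwise orthogonal, and hence so are the one-forms themselves.

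Next I would reduce the preservation condition to the involutivity statement \eqref{eq:involutivecondition}. Since $\vartheta,\theta,\eta$ differ from $\dd x_1,\dd x_2,\dd x_3$ only by nowhere-vanishing scaling functions, the triple is preserved by $\nabla^{g,A}$ precisely when $\nabla^{g,A}_w\dd x_i\in\Span\{\dd x_1,\dd x_2,\dd x_3\}$ for every $w\in\fX(\mathbb{R}^7)$ and every $i$. The key observation is that $\Span\{\dd x_1,\dd x_2,\dd x_3\}$ is exactly the annihilator of the transverse distribution $\escal{\partial_{y_1},\partial_{y_2},\partial_{y_3},\partial_z}$; hence the involutivity condition is equivalent to the vanishing of $(\nabla^{g,A}_w\dd x_i)(\partial_{y_j})$ and $(\nabla^{g,A}_w\dd x_i)(\partial_z)$ for all indices, which is precisely the pair of equations \eqref{eq:existencespinorexample}. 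Using $(\dd x_i)^{\sharp}=e^{-\cK_i}\partial_{y_i}$, the contorsion enters these expressions only through the term $e^{-\cK_i}A(w,\partial_{y_i},-)$.

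The main computational step, and the main obstacle, is the explicit evaluation of the Levi-Civita derivatives $\nabla^g\dd x_i$ via Koszul's formula. This requires inverting the metric \eqref{eq:metricexample}, whose partially null block structure calls for some care: each $(x_i,y_i)$ block is off-diagonal null, yielding $g^{x_ix_i}=0$, $g^{x_iy_i}=e^{-\cK_i}$ and $g^{y_iy_i}=-\cH_i e^{-2\cK_i}$, together with $g^{zz}=e^{-\cF}$. Feeding this into the Christoffel symbols and collecting terms produces the explicit formula for $\nabla^g\dd x_i$ displayed before \eqref{eq:existencespinorexample}, and then the restrictions $(\nabla^g\dd x_i)(\partial_{y_j})$ and $(\nabla^g\dd x_i)(\partial_z)$. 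While lengthy, this calculation is entirely mechanical once the inverse metric is in hand; the only delicate point is to track which first derivatives of $\cH_i,\cK_i,\cF$ survive and in which $\dd x$- and $\dd z$-directions they land.

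Finally, substituting these derivatives into \eqref{eq:existencespinorexample} and solving for the relevant components of $A$ yields exactly \eqref{eq:torsionexample}. Consequently, if $A$ satisfies \eqref{eq:torsionexample}, then the involutivity condition \eqref{eq:involutivecondition} holds, the triple \eqref{eq:tripleexamples} is preserved by $\nabla^{g,A}$, and Corollary \ref{cor:spinor_with_torsion} guarantees the existence of the corresponding irreducible isotropic parallel spinor $\varepsilon\colon\mathbb{R}^7\to\mathbb{R}^8$. Since only sufficiency is claimed, it is enough to check that \eqref{eq:torsionexample} forces the preservation of the triple; I would not attempt to characterize all admissible contorsion tensors.
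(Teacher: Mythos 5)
Your proposal is correct and takes essentially the same route as the paper: the paper likewise fixes the global triple \eqref{eq:tripleexamples} (so coherence is automatic on $\mathbb{R}^7$), reduces the preservation condition of Corollary \ref{cor:spinor_with_torsion} to \eqref{eq:involutivecondition} and hence to \eqref{eq:existencespinorexample} by testing against $\partial_{y_j}$ and $\partial_z$, and computes $\nabla^g\dd x_i$ explicitly to read off \eqref{eq:torsionexample}. Your inverse-metric components $g^{x_iy_i}=e^{-\cK_i}$, $g^{y_iy_i}=-\cH_i e^{-2\cK_i}$, $g^{zz}=e^{-\cF}$ and the identification $(\dd x_i)^{\sharp}=e^{-\cK_i}\partial_{y_i}$ are exactly the ingredients the paper's computation uses.
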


\noindent
This way we obtain plenty of metrics admitting parallel isotropic spinors for appropriate choices of contorsion. A direct inspection of Equation \eqref{eq:torsionexample} reveals the following result.
\begin{prop}
Let $\mathbb{R}^7$ equipped with the pseudo-Riemannian metric $g$ given in \eqref{eq:metricexample} and let $\varepsilon \colon \mathbb{R}^7 \to \mathbb{R}^8$ be an isotropic and irreducible spinor parallel with respect to $\nabla^{g,A}$ and whose associated triple of one-forms is \eqref{eq:tripleexamples}. Then:
\begin{equation*}
A(\partial_{y_i} , \partial_{y_j}, \partial_{z}) = 0\, , \qquad A(\partial_{y_i} , \partial_{y_j}, \partial_{y_k}) = 0
\end{equation*}

\noindent
for every $i,j,k = 1,2,3$.
\end{prop}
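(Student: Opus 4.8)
The plan is to deduce the stated vanishing directly from the already-derived constraint \eqref{eq:torsionexample}, by evaluating it on the isotropic coordinate vector fields $\partial_{y_1},\partial_{y_2},\partial_{y_3}$. The point to keep in mind is that the left-hand sides of \eqref{eq:torsionexample} are the one-forms $A(-,\partial_{y_i},\partial_z)$ and $A(-,\partial_{y_i},\partial_{y_j})$, whose open slot is precisely the $T^{\ast}M$-factor of the contorsion $A\in\Gamma(T^{\ast}M\otimes\Lambda^2 T^{\ast}M)$. Hence contracting the open slot with $\partial_{y_m}$ returns exactly the scalar components $A(\partial_{y_m},\partial_{y_i},\partial_z)$ and $A(\partial_{y_m},\partial_{y_i},\partial_{y_j})$ that appear in the statement, so it suffices to show that the corresponding right-hand sides vanish on these vectors.

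The key observation is that the right-hand sides of \eqref{eq:torsionexample} all lie in $\Span\{\dd x_1,\dd x_2,\dd x_3,\dd z\}$, and that every such one-form annihilates each $\partial_{y_m}$, since $\dd x_i(\partial_{y_m})=0$ and $\dd z(\partial_{y_m})=0$ for all $i,m\in\{1,2,3\}$. Therefore, evaluating the first equation of \eqref{eq:torsionexample} on $\partial_{y_j}$ gives $\tfrac{1}{2}A(\partial_{y_j},\partial_{y_i},\partial_z)=\tfrac{\partial e^{\cK_i}}{\partial z}\,\dd x_i(\partial_{y_j})-\tfrac{\partial e^{\cF}}{\partial y_i}\,\dd z(\partial_{y_j})=0$, and evaluating the second on $\partial_{y_k}$ gives $\tfrac{1}{2}A(\partial_{y_k},\partial_{y_i},\partial_{y_j})=\tfrac{\partial e^{\cK_i}}{\partial y_j}\,\dd x_i(\partial_{y_k})-\tfrac{\partial e^{\cK_j}}{\partial y_i}\,\dd x_j(\partial_{y_k})=0$ whenever $i\neq j$.

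Two small bookkeeping points then finish the argument. First, the second equation of \eqref{eq:torsionexample} is only stated for $i\neq j$; the diagonal case $i=j$ is automatic because $A$ is antisymmetric in its last two arguments, so $A(-,\partial_{y_i},\partial_{y_i})=0$ identically. Second, since the vanishing holds for every value of the free indices, a relabeling yields $A(\partial_{y_i},\partial_{y_j},\partial_z)=0$ and $A(\partial_{y_i},\partial_{y_j},\partial_{y_k})=0$ for all $i,j,k\in\{1,2,3\}$, which is the assertion. The whole computation is a direct inspection of \eqref{eq:torsionexample}; the only steps requiring care are keeping track of which slot of $A$ is being contracted and invoking the antisymmetry of $A$ on the diagonal, so I do not expect any genuine obstacle here.
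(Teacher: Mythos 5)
Your proof is correct and takes essentially the same route as the paper, which establishes this proposition purely by the \emph{direct inspection} of Equation \eqref{eq:torsionexample} that you spell out: contracting the open $T^{\ast}M$-slot of $A$ with the $\partial_{y_m}$ and noting that the right-hand sides lie in $\Span\{\dd x_1,\dd x_2,\dd x_3,\dd z\}$, which annihilates these vectors. Your two bookkeeping points (the diagonal case $i=j$ via antisymmetry of $A$ in its last two arguments, and the relabeling of indices) are exactly the details the paper leaves implicit.
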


\noindent
Hence, the contorsion of parallel spinors on $(\mathbb{R}^7,g)$ with an associated triple of one-forms \eqref{eq:tripleexamples} is restricted and in particular, there are plenty of contorsion tensors for which no such parallel spinor exists. For the Levi-Civita connection, we immediately obtain the following corollary.

\begin{cor}\label{cor:LC-parallel_isotropic_spinor}
Let $\mathbb{R}^7$ equipped with the pseudo-Riemannian metric $g$ given in \eqref{eq:metricexample} and such that $K_i$, $i=1,2,3$, and $F$ depend only on the coordinates $(x_1,x_2,x_3)$. Then $(\mathbb{R}^7,g)$ admits an irreducible isotropic spinor parallel with respect to the Levi-Civita connection. 
\end{cor}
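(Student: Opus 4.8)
The plan is to specialize the explicit existence criterion obtained at the end of Section~\ref{subsec:examples} to the Levi-Civita case $A = 0$. By Corollary~\ref{cor:spinor_with_torsion}, applied to the metric \eqref{eq:metricexample} together with the candidate triple \eqref{eq:tripleexamples}, the associated isotropic irreducible spinor with square $\phi = \vartheta\wedge\theta\wedge\eta$ is parallel under $\nabla^{g,A}$ precisely when the contorsion satisfies the component equations \eqref{eq:torsionexample}. Setting $A = 0$, so that $\nabla^{g,A} = \nabla^g$ is the Levi-Civita connection, these equations collapse to the demand that the right-hand sides of \eqref{eq:torsionexample} vanish identically. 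Before doing so I would note that the triple \eqref{eq:tripleexamples} is genuinely isotropic, mutually orthogonal and (globally) coherent, which is immediate from the block structure of \eqref{eq:metricexample}, so that Theorem~\ref{thm:null_spinor} and Corollary~\ref{cor:spinor_with_torsion} do apply.

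Next I would extract scalar conditions from the vanishing of the form-valued right-hand sides of \eqref{eq:torsionexample}. For each fixed $i$ the coordinate differentials $\dd x_i$ and $\dd z$ are pointwise linearly independent, and for $i\neq j$ the differentials $\dd x_i$ and $\dd x_j$ are pointwise linearly independent. Hence setting $A=0$ in \eqref{eq:torsionexample} is equivalent to the scalar system
\begin{equation*}
\frac{\partial e^{\cK_i}}{\partial z} = 0\, , \qquad \frac{\partial e^{\cF}}{\partial y_i} = 0\, , \qquad \frac{\partial e^{\cK_i}}{\partial y_j} = 0\, , \qquad i,j = 1,2,3\, , \quad i\neq j.
\end{equation*}
Since the exponential never vanishes, this says precisely that each $\cK_i$ is independent of $z$ and of $y_j$ for every $j$, while $\cF$ is independent of $y_i$ for every $i$.

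The final step is then purely observational: the hypothesis that $\cK_1,\cK_2,\cK_3$ and $\cF$ depend only on the coordinates $(x_1,x_2,x_3)$ forces every derivative above to vanish, since none of the variables $z,y_1,y_2,y_3$ occurs in these functions. Consequently \eqref{eq:torsionexample} holds with $A=0$, and Corollary~\ref{cor:spinor_with_torsion} guarantees that the isotropic irreducible spinor determined by \eqref{eq:tripleexamples} is parallel with respect to the Levi-Civita connection. There is no substantive obstacle here: the result is a direct unwinding of the explicit criterion \eqref{eq:torsionexample}, and the only point that deserves genuine care is the passage from the vanishing of the form-valued right-hand sides to the vanishing of their individual scalar coefficients, which relies on the linear independence of the relevant coordinate differentials recorded above.
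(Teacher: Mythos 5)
Your proof is correct and follows essentially the same route as the paper: the corollary is obtained by specializing the criterion \eqref{eq:torsionexample} to $A=0$ via Corollary \ref{cor:spinor_with_torsion} and observing that the hypothesis that $\cK_1,\cK_2,\cK_3,\cF$ depend only on $(x_1,x_2,x_3)$ makes the right-hand sides vanish identically. One cosmetic remark: since your scalar condition $\partial e^{\cK_i}/\partial y_j=0$ carries the restriction $i\neq j$, your summary that ``each $\cK_i$ is independent of $y_j$ for every $j$'' slightly overstates the equivalent condition (only $j\neq i$ is forced, as $\partial \cK_i/\partial y_i$ never enters \eqref{eq:torsionexample}), but this affects only the converse characterization and not the implication the corollary requires.
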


\begin{remark}
Note that in the previous corollary the functions $H_i$, $i=1,2,3$, are allowed to be arbitrary.
\end{remark}

\noindent
In the set-up of Corollary \ref{cor:LC-parallel_isotropic_spinor} the metric $g$ is generically not flat, not Ricci-flat, and not scalar-flat. In fact, a computation reveals that it is scalar-flat if and only if:
\begin{equation*}
e^{-2K_1} \frac{\partial^2H_1}{\partial y_1^2}+ e^{-2K_2} \frac{\partial^2H_2}{\partial y_2^2}+ e^{-2K_3} \frac{\partial^2H_3}{\partial y_3^2}=0.
\end{equation*}

\noindent
To the best of our knowledge, these are the first explicit examples of metrics admitting a torsion parallel irreducible isotropic spinor.


\subsection*{Author contributions}

All authors have contributed equally.

\subsection*{Funding}

The work of AGG is supported by the Beijing Institute of Mathematical Sciences and Applications (BIMSA) and partially funded by the German Science Foundation (DFG) under Germany’s Excellence Strategy -- EXC 2121 “Quantum Universe” -- 390833306 and the Deutsche Forschungsgemeinschaft (DFG, German Research Foundation) -- SFB-Geschäftszeichen 1624 -- Projektnummer 506632645. The work of CSS was supported by the Leonardo grant LEO22-2-2155 of the BBVA Foundation.

\subsection*{Data availability}

No datasets were generated or analysed during the current study.

\subsection*{Conflict of interest}

Not applicable.

\subsection*{Ethics approval and consent to participate}

Not applicable.


\bibliographystyle{amsplain}
\bibliography{biblio}

\end{document}